\documentclass[11pt,reqno]{amsart}

\usepackage{amsmath,amssymb,mathtools}
\usepackage{booktabs}
\usepackage{enumitem}
\usepackage{hyperref}
\usepackage{physics}
\usepackage{tikz}
\usetikzlibrary{arrows.meta,calc,decorations.pathreplacing,positioning}
\usepackage{aliascnt}
\usepackage[capitalize,nameinlink]{cleveref}

\hypersetup{
  colorlinks=true,
  linkcolor=blue,
  citecolor=blue,
  urlcolor=blue
}

\newtheorem{theorem}{Theorem}[section]

\newaliascnt{lemma}{theorem}
\newtheorem{lemma}[lemma]{Lemma}
\aliascntresetthe{lemma}
\crefname{lemma}{Lemma}{Lemmas}
\Crefname{lemma}{Lemma}{Lemmas}

\newaliascnt{proposition}{theorem}
\newtheorem{proposition}[proposition]{Proposition}
\aliascntresetthe{proposition}
\crefname{proposition}{Proposition}{Propositions}
\Crefname{proposition}{Proposition}{Propositions}

\newaliascnt{corollary}{theorem}

\aliascntresetthe{corollary}
\crefname{corollary}{Corollary}{Corollaries}
\Crefname{corollary}{Corollary}{Corollaries}

\newaliascnt{claim}{theorem}
\newtheorem{claim}[claim]{Claim}
\aliascntresetthe{claim}
\crefname{claim}{Claim}{Claims}
\Crefname{claim}{Claim}{Claims}

\theoremstyle{definition}
\newaliascnt{definition}{theorem}

\aliascntresetthe{definition}
\crefname{definition}{Definition}{Definitions}
\Crefname{definition}{Definition}{Definitions}

\newcommand{\R}{\mathbb R}
\newcommand{\N}{\mathbb N}
\newcommand{\eps}{\varepsilon}

\title[Dimension-free covering functional estimates]
{Dimension-free estimates for covering functionals of simplices and $\ell_p$ balls}

\author{Feifei Chen}
\address{School of Mathematics, North University of China, Taiyuan 030051,
  China}
\email{chernfeifei@163.com}

\author{Chan He}
\address{School of Mathematics, North University of China, Taiyuan 030051,
  China}
\email{hechan@nuc.edu.cn}

\author{Senlin Wu}
\address{School of Mathematics, North University of China, Taiyuan 030051,
  China}
\email{wusenlin@nuc.edu.cn}

\subjclass[2020]{52C17, 52A21, 46B20}

\keywords{Covering functionals, Hadwiger's covering conjecture, probabilistic
  method, simplex, cross-polytope, $\ell_p$ ball}

\date{\today}

\begin{document}

\begin{abstract}
  We study \(\Gamma_{2^n}(K)\), the least positive number \(\gamma>0\) such that
  an \(n\)-dimensional convex body \(K\) can be covered by \(2^n\) translates of
  \(\gamma K\). For \(n\)-simplices \(\Delta_n\), we prove that
  \(\Gamma_{2^n}(\Delta_n)\), as a sequence in \(n\), tends to \(1/2\). For the
  cross-polytope \(B_1^n\), we show that \(\Gamma_{2^n}(B_1^n)\leq5/6\) holds
  for all \(n\geq2\), and that
  \(\limsup_{n\to\infty}\Gamma_{2^n}(B_1^n)\leq0.641\cdots\). Finally, we prove
  the existence of a constant \(\kappa_*<1\) such that
  \(\Gamma_{2^n}(B_p^n)\leq\kappa_*\) for all \(n\geq2\) and all
  \(p\in[1,\infty]\).
\end{abstract}

\maketitle

\section{Introduction}

Denote by \(\mathcal{K}^n\) the set of all \emph{convex bodies} (compact and
convex sets having interior points) in \(\mathbb{R}^n\). For each \(K\in
\mathcal{K}^n\), let \(c(K)\) be the least number of translates of its interior
that can cover \(K\). Equivalently, \(c(K)\) is the least number of smaller
homothetic copies of \(K\) needed to cover \(K\). Hadwiger's covering conjecture
asserts that \(c(K)\leq 2^n,~\forall K\in \mathcal{K}^n\) and \(c(K)=2^n\) if
and only if \(K\) is affinely equivalent to \([0,1]^n\), cf.
\cite{Hadwiger1957}. The problem was solved in the planar case by Levi
\cite{Levi1954}, but remains open when \(n\geq3\); see
\cite{BoltyanskiMartiniSoltan1997,BrassMoserPach2005,BezdekKhan2018}. Recent
work on general upper bounds for \(c(K)\) can be found in
\cite{HuangSlomkaTkoczVritsiou2022} and \cite{CamposHintumMorrisTiba2024}. For
related discussions of covering, illumination, and separation viewpoints, see
\cite{Bezdek1991AffineSubspaces,Bezdek1992Relatives,Bezdek1993HadwigerLevi,
  Bezdek2006Illumination,BezdekKhan2018}.

For each integer \(m\geq1\) and each \(K\in\mathcal{K}^n\), set \([m]=[1,m]\cap
\mathbb{Z}\) and
\begin{displaymath}
  \Gamma_m(K)=\inf\left\{\gamma>0:\ \exists c_1,\ldots,c_m\in\R^n\ \text{such that}\ K\subseteq \bigcup_{i\in[m]}(c_i+\gamma K)\right\}.
\end{displaymath}
It can be verified that
\begin{displaymath}
  c(K)\leq2^n \quad\Longleftrightarrow\quad \Gamma_{2^n}(K)<1.
\end{displaymath}
This equivalence underlies Zong's program for Hadwiger's covering conjecture;
see \cite{Zong2010}.

We call the map \(\mathcal{K}^n\to [0,1],~K\mapsto \Gamma_{2^n}(K)\) the
\emph{covering functional}. By \cite[Theorem 1.7]{HeMartiniWu2016},
\(\Gamma_{2^n}(K)\geq 1/2,~\forall K\in \mathcal{K}^n\), and
\(\Gamma_{2^n}(K)=1/2\) if and only if \(K\) is affinely equivalent to
\([-1,1]^n\). The covering-functional viewpoint was developed in
\cite{HeMartiniWu2016,WuHe2019}; related quantitative, computational, and
polytopal work includes
\cite{Zong2010,RogersZong1997,ArtsteinAvidanSlomka2015,HeLvMartiniWu2023,LiMengWu2022,
  YuGaoHeWu2023,ChenGaoWu2024,LyuChenWu2025,ChenGaoLiWu2025,LianXiaXueZhang2026}.
We study the behavior of the covering functional for
\begin{displaymath}
  \Delta_n=\left\{x\in\R^n:\ x_i\geq0,\ \sum_{i=1}^n
    x_i\leq1\right\}\quad\text{and}\quad B_p^n=\{x\in\R^n:\ \norm{x}_p\leq1\},
\end{displaymath}
where \(\norm{\cdot}_p\) is the standard \(p\)-norm on \(\mathbb{R}^n\).

For the two families studied here, Yanlu Lian et al. obtained (cf.
\cite{LianXiaXueZhang2026})
\begin{displaymath}
  \limsup_{n\to\infty}\Gamma_{2^n}(\Delta_n)\leq 0.773\cdots\quad \text{and}\quad \limsup_{n\to\infty}\Gamma_{2^n}(B_1^n)\leq 0.824\cdots,
\end{displaymath}
see also \cite[Corollary 2.8]{ChenGaoLiWu2025}. Xia Li et al. showed that (cf.
\cite[Proposition 5]{LiMengWu2022})
\begin{displaymath}
  \Gamma_{2^n}(\Delta_n)\leq 0.772\cdots .
\end{displaymath}
As mentioned above, for an \(n\)-dimensional convex body \(K\),
\(\Gamma_{2^n}(K)=1/2\) if and only if \(K\) is affinely equivalent to
\(B_\infty^n=[-1,1]^n\). Our first main result shows that, surprisingly, the
same asymptotic value holds for simplices.

\begin{theorem}
  \label{thm:simplex-half}
  For every fixed \(\gamma>1/2\), \(\Gamma_{2^n}(\Delta_n)\leq\gamma\) for all
  sufficiently large \(n\), and therefore
  \begin{displaymath}
    \lim_{n\to\infty}\Gamma_{2^n}(\Delta_n)=\frac{1}{2} .
  \end{displaymath}
\end{theorem}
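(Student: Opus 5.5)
The lower bound \(\Gamma_{2^n}(\Delta_n)\ge 1/2\) is \cite[Theorem 1.7]{HeMartiniWu2016}, so only the upper bound needs proof. My plan is to turn the covering condition into a combinatorial domination problem, and to build the translates as explicit layered vectors whose number is \(2^{o(n)}\) times an exponential with base below \(2\).

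\textbf{Reduction.} A point \(x\in\Delta_n\) lies in \(c+\gamma\Delta_n\) exactly when
\begin{displaymath}
  c\le x \ \text{coordinatewise}\qquad\text{and}\qquad \sum_i x_i-\sum_i c_i\le\gamma .
\end{displaymath}
The translate \(c=0\) covers every \(x\) with \(s:=\sum_i x_i\le\gamma\). Write \(\beta=1-\gamma<1/2\). It therefore suffices to find a family \(\mathcal C\subseteq\R^n_{\ge0}\) with \(|\mathcal C|\le 2^n-1\) such that every \(x\in\Delta_n\) dominates some \(c\in\mathcal C\) with \(\sum_i c_i\ge\beta\). In words, each \(x\) must contain a sub-vector of mass at least \(\beta\) from a prescribed list. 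The slack \(\gamma>1/2\) means we only need to capture slightly less than half of the mass of \(x\). This is what should make \(2^n\) translates enough.

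\textbf{Construction.} Fix a small \(\eta>0\) and geometric thresholds \(t_j=(1+\eta)^{-j}\), for \(j=0,1,\dots,J\), with \(t_J\approx\eta/n\). For \(x\in\Delta_n\) set \(S_j(x)=\{i: x_i\ge t_j\}\). These sets are nested, and \(|S_j(x)|\le 1/t_j\) because \(\sum_i x_i\le1\). Rounding each coordinate of \(x\) down to the grid \(\{t_j\}\) loses at most a factor \(1+\eta\) on coordinates \(\ge t_J\), and at most \(\eta\) in total on the remaining ones. Thus \(x\) dominates a vector of the form \(\sum_j (t_j-t_{j+1})\mathbf 1_{S_j}\) of mass at least \(s/(1+\eta)-\eta\). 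We only need mass \(\beta\), not \(s\). So I would truncate: keep only the top layers, stopping as soon as the captured mass reaches \(\beta\). The candidate family \(\mathcal C\) is then the set of all truncated layered vectors \(\sum_{j\le j_0}(t_j-t_{j+1})\mathbf 1_{S_j}\) of mass at least \(\beta\) but not much more.

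\textbf{Main obstacle: counting.} The crux is to show that the number of such truncated nested chains \((S_0\subseteq\cdots\subseteq S_{j_0})\) of total weight about \(\beta\) is at most \(2^{n}e^{-\delta n}\) for large \(n\). The constraint \(\sum_j (t_j-t_{j+1})|S_j|\approx\beta\) limits the sizes, and an entropy estimate on the chain bounds the count. The hard case is when \(x\) is spread almost uniformly, with \(x_i\approx 1/n\). Then one needs a set \(S\) of about \(\beta n\) coordinates, and \(\binom{n}{\beta n}\approx 2^{H(\beta)n}\). This is below \(2^n\) only because \(\beta<1/2\), and that is exactly where \(\gamma>1/2\) enters.

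For general profiles, I would bound the count by \(\exp\bigl(n\max \sum_j H(\cdot)\bigr)\), maximized over admissible size profiles. I would then try to show that the maximum is attained by the flat profile, for instance by a convexity or Lagrange-multiplier comparison of the entropy functional, giving \(2^{(H(\beta)+O(\eta))n}\). If this direct entropy bound turns out to be too crude for mixed profiles, the fallback is to randomize the choice of which coordinates are used to capture mass \(\beta\). Concretely, I would use a random sub-selection of each layer so that the captured mass is concentrated, and apply a first-moment argument over the \(e^{O(\eta n)}\) possible rounded profiles. Choosing \(\eta\) small in terms of \(\gamma-1/2\) then yields \(|\mathcal C|+1\le2^n\) for all large \(n\), which proves \(\Gamma_{2^n}(\Delta_n)\le\gamma\).
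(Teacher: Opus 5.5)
There is a genuine gap in your reduction. From the correct criterion ($c\le x$ and $\|x\|_1-\|c\|_1\le\gamma$) you pass to the uniform requirement $\|c\|_1\ge\beta=1-\gamma$ for every $x$ with $s=\|x\|_1>\gamma$. Your remark that one only needs to capture ``slightly less than half of the mass'' is true only for $s$ near $1$. For $s$ just above $\gamma$ you are asking for a fraction $\beta/s\approx(1-\gamma)/\gamma$ of the mass, which exceeds $1/2$ when $\gamma<2/3$. In that range no family is small enough. On the slice $F_s=\{x\ge0:\ \|x\|_1=s\}$, the points dominating a fixed $c$ form $c+F_{s-\|c\|_1}$, a homothet of $F_s$ of ratio at most $(s-\beta)/s$. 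Comparing $(n-1)$-volumes gives $\#\mathcal C\ge\bigl(s/(s-\beta)\bigr)^{n-1}$ for every $s\in(\gamma,1]$, hence $\#\mathcal C\ge\bigl(\gamma/(2\gamma-1)\bigr)^{n-1}$. This exceeds $2^n$ for large $n$ whenever $\gamma<2/3$, so your scheme cannot get below $2/3$. The paper keeps the $x$-dependent requirement instead. It slices $\|x\|_1$ into boundedly many layers $t_{j-1}\le\|x\|_1\le t_j$ and uses centers of mass $d_j=t_j-\gamma$ on layer $j$. The $t_j$ are chosen so that $d_j<\lambda(1-\eta)t_{j-1}$ for a single fixed $\lambda<1/2$, which is possible because $(s-\gamma)/s\le1-\gamma<1/2$ for all $s\le1$.

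The counting step also fails, even after this repair. Your candidates are the tops $(\bar x-\theta)_+$ of rounded points $\bar x$. The flat point $\frac{1}{|T|}\mathbf 1_T$ already yields a candidate supported exactly on $T$, for every nonempty $T\subseteq[n]$. So $\#\mathcal C\ge2^n-1$ before any non-flat profile is counted, and the claimed bound $2^ne^{-\delta n}$ is false. Your $\binom{n}{\beta n}$ heuristic describes a width truncation, not your construction. Moreover $\binom{n}{\beta n}\le e^{nH(\beta)}$ is exponentially below $2^n$ for every $\beta\ne1/2$, so it does not explain why $\gamma>1/2$ matters.

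What works is your fallback, made quantitative; this is the heart of the paper's proof. Fix a layer $j$ and a multiplicity profile $(\beta_1,\dots,\beta_L;m_1,\dots,m_L)$; there are only $e^{o(n)}$ profiles, since the grid has $O(\ln n)$ levels. Place $\lceil\lambda m_\ell\rceil$ entries of height at most $\beta_\ell$, of total mass $d_j$, on uniformly random distinct coordinates. The probability of landing below a fixed rounded point is a product of falling-factorial ratios. \Cref{lem:integral-exponent} shows it is at least $(1-\lambda)^ne^{-o(n)}$ uniformly, via $\lambda\int_0^1\ln\frac{1-\lambda s}{(1-\lambda)s}\,ds=-\ln(1-\lambda)$. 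The extremal profiles here are many-level staircases, not flat vectors. Take $e^{(-\ln(1-\lambda)+\varepsilon)n}$ independent centers per (layer, profile). Each of the $\exp\{O(n\ln\ln n)\}$ rounded points is then missed with probability at most $\exp(-e^{\varepsilon n/2})$, so a union bound succeeds. The total number of centers is below $2^n$ exactly because $-\ln(1-\lambda)<\ln 2$, i.e.\ $\lambda<1/2$.
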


The second result provides a universal upper bound for \(\Gamma_{2^n}(B_1^n)\).

\begin{theorem}
  \label{thm:b1-main}
  For every \(n\geq2\),
  \begin{displaymath}
    \Gamma_{2^n}(B_1^n)\leq \frac{5}{6}.
  \end{displaymath}
\end{theorem}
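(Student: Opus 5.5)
The plan is to cover \(B_1^n\) with three families of translates of \(\gamma B_1^n\), \(\gamma=5/6\): one copy at the origin, \(2n\) "vertex" copies, and the remaining \(2^n-2n-1\) copies for a thin "flat shell". The case \(n=2\) is separate: \(B_1^2\) is affinely a square, so \(\Gamma_4(B_1^2)=1/2\) by \cite[Theorem 1.7]{HeMartiniWu2016}. Small \(n\) (say \(n\le 4\)) may need an explicit configuration checked by hand, since \(2^n-2n-1\) is tiny there.

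\emph{Vertex copies.} Put centers at \(\pm\lambda e_i\) with \(\lambda=1/6\). For \(x\in B_1^n\) with \(|x_i|\ge m\), use the center \(\lambda\,\mathrm{sign}(x_i)e_i\). Then
\[
\|x-\lambda\,\mathrm{sign}(x_i)e_i\|_1=\|x\|_1-|x_i|+\bigl||x_i|-\lambda\bigr|.
\]
This is at most \(1-\lambda\) if \(|x_i|\ge\lambda\), and at most \(1+\lambda-2|x_i|\) otherwise. Both are \(\le 5/6\) as soon as \(m\ge 1/6\). So every point with some coordinate of modulus at least \(1/6\) is covered. The origin copy \(\tfrac56B_1^n\) covers every point with \(\|x\|_1\le 5/6\).

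\emph{The shell.} What remains is
\[
R=\{x:\ 5/6<\|x\|_1\le 1,\ \|x\|_\infty<1/6\}.
\]
Every \(x\in R\) has at least six nonzero coordinates, each of modulus below \(1/6\). I would cover \(R\) with copies centered at points \(a\,\varepsilon_S\), where \(\varepsilon_S\) is a sign vector supported on a set \(S\) and \(a\) is a small constant. For \(x\in R\) whose signs agree with \(\varepsilon\) on \(S\),
\[
\|x-a\varepsilon_S\|_1=\|x\|_1-\sum_{i\in S}\bigl(|x_i|-\bigl||x_i|-a\bigr|\bigr),
\]
and the saving \(\sum_{i\in S}\min(|x_i|,2|x_i|-a)\) has to reach \(1/6\).

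The natural first attempt is all \(2^n\) full sign vectors with \(a\) of order \(1/n\), but it does not work as stated. Coordinates with \(x_i\approx0\) cost \(a\) each, so the saving is not automatically large when \(x\) has few sizeable coordinates. Counting also rules out using all \(2^n\) full sign vectors, since \(2n+1\) copies are already spent. I would therefore try supports \(S\) of a fixed size \(k\) (for instance \(k=6\)) with a single \(a\approx 1/6\), together with a covering-design or probabilistic choice of the family \(\{(S,\varepsilon)\}\). The requirement is that every \(x\in R\) admits some chosen \((S,\varepsilon)\) for which \(S\) captures coordinates of \(x\) carrying \(\ell_1\)-mass at least about \(1/3\), with matching signs.

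I expect this shell step to be the main obstacle. It needs a sharp averaging estimate, of the form "among the \(2^n-2n-1\) available centers some capture enough mass", uniformly over all flat vectors \(x\). The constant \(5/6\) presumably comes from balancing the three regimes: \(\lambda\), \(m\) and the shell saving all equal \(1/6\).
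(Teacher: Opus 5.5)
Your reduction is correct: it is \Cref{lem:common-boundary-decomposition} with \(\gamma=5/6\). The origin copy and the \(2n\) copies centred at \(\pm\frac16 e_i\) leave only the shell \(R\), which is \(Q(n,5/6)\) up to boundary.

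You have the small cases backwards. For \(3\le n\le5\) the shell is empty, since \(\|x\|_1\le n\|x\|_\infty<5/6\). For \(n=6\), every \(x\in R\) has all coordinates nonzero and satisfies \(\|x-\frac16(\operatorname{sgn}x_1,\dots,\operatorname{sgn}x_5,0)\|_1=\frac56-\|x\|_1+2|x_6|<\frac13\), so \(32\) extra centres suffice there.

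\textbf{The gap.} The shell for \(n\ge7\) is left open, and the strategy you sketch for it cannot work for large \(n\). Take the flat point \(x=\frac1n\varepsilon\in R\).
\begin{itemize}
\item Every centre \(c\) satisfies \(\|x-c\|_1\ge(n-\#\operatorname{supp}c)/n\). Covering \(x\) at ratio \(5/6\) therefore forces \(\#\operatorname{supp}c\ge n/6\).
\item Then \(\|c\|_1\le\|x\|_1+\frac56\le\frac{11}{6}\) forces entries of average size at most \(11/n\), not \(1/6\).
\item Consequently, centres supported on \(k=6\) coordinates miss these points for every \(n>36\), whatever \(a\) is. Likewise, ``capturing mass \(1/3\)'' needs \(\#S\ge n/3\) for them.
\end{itemize}
A smaller slip: the per-coordinate saving is \(\min\{a,2|x_i|-a\}\), not \(\min\{|x_i|,2|x_i|-a\}\).

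Once supports are linear in \(n\), you must serve all sign patterns and all mass profiles of \(R\) with fewer than \(2^n\) centres. That is exactly what the multi-level probabilistic construction in the proof of \Cref{thm:b1-asymptotic} does. Since \(5/6>\gamma_{\rm sgn}\), it finishes your route for all sufficiently large \(n\). However, the threshold it yields is not explicit, so a range of intermediate \(n\) would still have no argument.

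\textbf{What the paper does instead.} It avoids the shell entirely, using the cited lattice covering \Cref{lem:lp-lattice}: \(M(n,k)\le 2^n\) implies \(\Gamma_{2^n}(B_1^n)\le n/(n+k)\). Here \(M(n,k)-1\) is the number of integer points of \(\ell_1\)-norm exactly \(k\).
\begin{itemize}
\item For \(3\le n\le178\), \(n\ne6\): take \(k=\lceil n/5\rceil\), so the ratio is at most \(5/6\). The condition \(M(n,\lceil n/5\rceil)\le2^n\) is checked by exact computation (\Cref{lem:b1-finite}).
\item For \(n=6\): \(\Gamma_{64}(B_1^6)\le\Gamma_{12}(B_1^6)\le\frac56\) by \Cref{lem:two-n}.
\item For \(n\ge179\): \Cref{lem:lmw} applies, since \(\vartheta_2>0.20559\) gives \(\lfloor\vartheta_2 n\rfloor\ge n/5\).
\end{itemize}
So \(5/6\) arises from \(n/(n+\lceil n/5\rceil)\) and the exceptional \(n=6\), not from balancing your three regimes. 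To turn your plan into a proof, you would need an explicit covering of \(R\) by \(2^n-2n-1\) translates of \(\frac56 B_1^n\), valid for every \(n\ge7\).
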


Let \(\gamma_{\rm sgn}\in(1/2,1)\) be the unique solution of
\begin{displaymath}
  -\ln\gamma+(1-\gamma)\ln2=\ln2.
\end{displaymath}
Numerically,
\begin{displaymath}
  \gamma_{\rm sgn}=0.641\cdots .
\end{displaymath}

\begin{theorem}
  \label{thm:b1-asymptotic}
  For every fixed \(\gamma>\gamma_{\rm sgn}\), \(\Gamma_{2^n}(B_1^n)\leq\gamma\)
  for all sufficiently large \(n\). Consequently,
  \begin{displaymath}
    \limsup_{n\to\infty}\Gamma_{2^n}(B_1^n)\leq\gamma_{\rm sgn}.
  \end{displaymath}
\end{theorem}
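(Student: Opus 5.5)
The plan is a probabilistic covering argument organized by sign patterns, which is what the name \(\gamma_{\rm sgn}\) suggests. For each sign vector \(\sigma\in\{-1,1\}^n\) we use one translate, so exactly \(2^n\) translates in all. The translate for \(\sigma\) will be \(c_\sigma+\gamma B_1^n\) with \(c_\sigma=t\sigma\) for a suitable scalar \(t>0\), or a slightly perturbed random version. It is supposed to cover the part of \(B_1^n\) lying in or near the orthant \(O_\sigma\).

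As a first attempt, take \(c_\sigma=\frac{1-\gamma}{n}\sigma\). For \(x\in O_\sigma\cap B_1^n\) this gives
\[
\norm{x-c_\sigma}_1=\sum_i\Bigl|\,|x_i|-\tfrac{1-\gamma}{n}\Bigr|,
\]
and this is at most \(\gamma\) only when few coordinates are small. The pure orthant construction therefore fails for points with many tiny coordinates. This is exactly where probability and the entropy term \((1-\gamma)\ln 2\) should enter.

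The refined plan is as follows. Choose the \(2^n\) centers randomly: \(c=\frac{1-\gamma}{m}\sum_{i\in S}\epsilon_i e_i\), with \(S\) a random \(m\)-subset and random signs. A point \(x\in B_1^n\) is covered by such a center roughly when the signs on \(S\) agree with \(\operatorname{sgn}(x_i)\) on the large coordinates.

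We then reduce to a finite net of \(B_1^n\). By convexity and monotonicity we only need to cover the extreme-type points, namely those whose mass is concentrated on about \(k\) coordinates, with error absorbed by a small increase of \(\gamma\). For a fixed net point we estimate the probability \(p\) that one random translate covers it. The computation should give \(p\approx \gamma^{n}\,2^{-(1-\gamma)n}\) up to subexponential factors: the factor \(\gamma^n\) is a volume ratio, and \(2^{-(1-\gamma)n}\) is the cost of matching signs on the fraction \(1-\gamma\) of coordinates where the shift matters. The defining equation \(-\ln\gamma+(1-\gamma)\ln 2=\ln 2\) is precisely \(2^n p\approx 1\), so for \(\gamma>\gamma_{\rm sgn}\) we have \(2^n p\ge e^{\delta n}\).

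To finish, we use the standard Rogers-type argument. With \(N=2^n\) random translates, the probability that a fixed net point is uncovered is at most \((1-p)^N\le e^{-e^{\delta n}}\). This beats any net of size \(e^{O(n\log n)}\), and a union bound completes the proof.

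The main obstacle is the exact probability computation for a worst-case point. Concretely, we must identify the extremal configuration and show that the covering event has probability at least \(e^{-n(-\ln\gamma+(1-\gamma)\ln 2-\ln 2)-o(n)}\cdot 2^{-n}\), uniformly over net points. Our first approach would be a type-class reduction: sort \(|x_i|\), condition on how many coordinates \(x\) has in each magnitude class, and optimize a one-dimensional large-deviation exponent. We expect that exponent to be maximized at the uniform point \(x=\frac1n(\pm1,\dots,\pm1)\), and this is what would produce \(\gamma_{\rm sgn}\).
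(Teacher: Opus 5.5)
Your framework matches the paper's: random centers with random signs, a union bound over a discretization, and the requirement $2^np\ge e^{\delta n}$. Your target $p\approx\gamma^n2^{-(1-\gamma)n}$ is also exactly what the paper obtains in the limit. The gap is that the center family you propose cannot achieve this target, and the reduction that would hide the problem is invalid.

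Your centers are flat: $c=w\sum_{i\in S}\epsilon_ie_i$. For such $c$, $\norm{x}_1-\norm{x-c}_1=\sum_{i\in S}(|x_i|-|x_i-c_i|)$. Each summand is at most $w$, and it is positive only if the signs agree and $|x_i|>w/2$. So the gain is at most $w\cdot\#\{i:|x_i|>w/2\}$.

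Now take $x_i=1/(iH_n)$ with $H_n=\sum_{k\le n}1/k$. Then $\norm{x}_1=1$, every coordinate is at most $1/H_n$, and $\#\{i:|x_i|>w/2\}<2/(wH_n)$. Hence every flat center gains less than $2/H_n\to0$, for every $w$, $m$, $S$ and every choice of signs. But $x\in c+\gamma B_1^n$ requires a gain of at least $1-\gamma$. So such points are covered by none of your translates, however many you take; your first-attempt centers $t\sigma$ are flat too.

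The step ``by convexity and monotonicity we only need to cover the extreme-type points'' does not rescue this. A union of translates is not convex, so covering concentrated points says nothing about $x$.

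Your identification of the worst case is also wrong. At the uniform point your scheme succeeds with probability $2^{-\lceil(1-\gamma)n\rceil}$, since any $S$ of that size works once the signs match. So $2^np\approx2^{\gamma n}$ there, and that point imposes no constraint. The factor $\gamma^n$, which you attribute to a volume ratio, actually comes from points whose magnitudes are spread over many scales — precisely the points flat centers cannot reach.

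The missing idea is to make the centers mirror the magnitude profile of the point being covered. The paper proceeds as follows.
\begin{enumerate}
\item It first removes, with $2n+1$ explicit translates ($\gamma B_1^n$ and $\pm(1-\gamma)e_i+\gamma B_1^n$), all points of norm at most $\gamma$ or with a coordinate exceeding $1-\gamma$. The rest is sliced into norm layers $t_{j-1}\le\norm{x}_1\le t_j$.
\item It rounds $(|x_1|,\dots,|x_n|)$ down to a geometric grid with $O(\ln n)$ levels. This gives $q$ with $q_i\le|x_i|$ and $\norm{q}_1\ge(1-\eta)\norm{x}_1$, and there are only $e^{o(n)}$ multiplicity profiles $(\beta_1,\dots,\beta_L;m_1,\dots,m_L)$.
\item For each layer and profile it draws random centers $u$ consisting of $\lceil\lambda m_\ell\rceil$ entries of size $c\beta_\ell$ at each level $\ell$. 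Here $c<1$ and the total mass is $t_j-\gamma$. The entries are injected uniformly into $[n]$ and given random signs.
\item If every level-$\ell$ entry lands on a coordinate with $q_i\ge\beta_\ell$ and the signs agree with $x$, then $u$ is dominated by $x$ with matching signs. Hence $\norm{x-u}_1=\norm{x}_1-\norm{u}_1\le\gamma$.
\item The placement probability is a product of falling-factorial ratios. A Riemann-sum comparison with $\lambda\int_0^1\ln\frac{1-\lambda s}{(1-\lambda)s}\,ds=-\ln(1-\lambda)$ bounds its logarithm below by $-n(-\ln(1-\lambda)+o(1))$. It equals $1$ for the uniform profile and is smallest for spread ones. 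The sign cost is $2^{-\lambda n-O(\ln n)}$.
\end{enumerate}
Only $\lambda>1-\gamma$ is needed. Choosing $\lambda<1-\gamma_{\rm sgn}$ as well gives $-\ln(1-\lambda)+\lambda\ln2<\ln2$, which is where $\gamma_{\rm sgn}$ comes from. Your plan does not close without such a multi-scale, profile-adapted family of centers and a probability bound that is uniform over profiles.
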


\begin{proposition}
  \label{prop:fixed-p}
  For every \(p\in[1,\infty)\),
  \begin{displaymath}
    \sup_{n\geq2}\Gamma_{2^n}(B_p^n)\leq \left(\frac56\right)^{1/p}.
  \end{displaymath}
\end{proposition}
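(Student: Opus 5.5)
The plan is to transfer the covering of \(B_1^n\) from \cref{thm:b1-main} to \(B_p^n\) through the coordinatewise homeomorphisms
\begin{displaymath}
  \Phi(x)=\bigl(\operatorname{sgn}(x_i)\abs{x_i}^p\bigr)_{i\in[n]},\qquad
  \Psi(y)=\bigl(\operatorname{sgn}(y_i)\abs{y_i}^{1/p}\bigr)_{i\in[n]}.
\end{displaymath}
The map \(\Phi\) sends \(B_p^n\) bijectively onto \(B_1^n\), and \(\Psi=\Phi^{-1}\). Put \(\gamma=5/6\). For each center \(c\) used for \(B_1^n\), I take \(\Psi(c)\) as the corresponding center for \(B_p^n\). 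The key one-dimensional input is the following: for \(p\geq1\) and real numbers \(a,b\) with \(ab\geq0\),
\begin{displaymath}
  \bigl|\operatorname{sgn}(a)\abs{a}^{1/p}-\operatorname{sgn}(b)\abs{b}^{1/p}\bigr|^p\leq\abs{a-b}.
\end{displaymath}
By symmetry it suffices to treat \(0\leq b\leq a\). Then the inequality is \(a^{1/p}-b^{1/p}\leq (a-b)^{1/p}\), which is the subadditivity of the concave function \(t\mapsto t^{1/p}\) on \([0,\infty)\) vanishing at \(0\).

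Summing this inequality over coordinates gives the following. If \(y\in B_1^n\), \(\norm{y-c}_1\leq\gamma\), and \(y_ic_i\geq0\) for every \(i\) (sign compatibility), then
\begin{displaymath}
  \norm{\Psi(y)-\Psi(c)}_p^p\leq\gamma,\quad\text{that is,}\quad \Psi(y)\in\Psi(c)+\gamma^{1/p}B_p^n.
\end{displaymath}
Hence the whole argument reduces to a strengthened form of \cref{thm:b1-main}. One must produce \(2^n\) centers \(c_1,\ldots,c_{2^n}\) such that every \(y\in B_1^n\) lies in some \(c_j+\frac56B_1^n\) with \(y_i(c_j)_i\geq0\) for all \(i\in[n]\). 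Given such a covering, every \(x\in B_p^n\) satisfies \(x=\Psi(\Phi(x))\), with \(\Phi(x)\in B_1^n\) covered compatibly by some \(c_j\). Therefore \(x\in\Psi(c_j)+(5/6)^{1/p}B_p^n\), which yields \(\Gamma_{2^n}(B_p^n)\leq(5/6)^{1/p}\) for every \(n\geq2\).

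The main obstacle is establishing this sign-compatible version of \cref{thm:b1-main}. I would first re-examine the construction behind \cref{thm:b1-main}. If, as is natural for \(2^n=\#\{\pm1\}^n\) pieces, the centers are indexed by sign vectors \(\eps\in\{\pm1\}^n\), they presumably have the form \(c_\eps=t\eps\), or a sign-symmetric variant with some zero coordinates. In that case the natural assignment sends \(y\) to the center with \(\eps=\operatorname{sgn}(y)\), breaking ties at zero coordinates arbitrarily. This assignment is automatically sign compatible, since zero coordinates of \(y\) or of \(c\) cause no harm. If instead the proof of \cref{thm:b1-main} covers some points by centers of mismatched sign pattern, I would try to replace those assignments: for such \(y\), set the offending center coordinates to zero. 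This modification only decreases \(\abs{y_i-c_i}\) when \(y_ic_i<0\), so it preserves \(\norm{y-c}_1\leq\gamma\). The cost is that the number of distinct centers may grow, and this must be checked against the budget \(2^n\); that check is where I expect most of the work to lie.
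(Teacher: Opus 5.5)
Your transfer step is correct. For \(ab\geq0\), subadditivity of \(t\mapsto t^{1/p}\) gives \(\bigl|\operatorname{sgn}(a)|a|^{1/p}-\operatorname{sgn}(b)|b|^{1/p}\bigr|^p\leq|a-b|\). So a covering of \(B_1^n\) by \(2^n\) translates \(c_j+\frac56B_1^n\), in which every \(y\) is covered by a center with \(y_i(c_j)_i\geq0\) for all \(i\), does pass through \(\Psi\) to a covering of \(B_p^n\) by \(2^n\) translates of \((5/6)^{1/p}B_p^n\).

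The gap is that you never produce such a sign-compatible covering, and \Cref{thm:b1-main} cannot supply one. It only asserts \(\Gamma_{2^n}(B_1^n)\leq5/6\), i.e.\ that some covering exists, with no information about how the centers sit relative to the points they cover. Your guess for the construction is also wrong: for \(c=t\eps\) with \(\eps\in\{-1,1\}^n\) one has \(\norm{e_1-t\eps}_1=|1-t|+(n-1)t\geq1\) for every \(n\geq2\), so such centers do not even cover the vertices at ratio \(5/6\). The fallback of zeroing offending coordinates keeps \(\norm{y-c}_1\leq\frac56\). However, the modified center depends on the signs of \(y\) on the support of \(c\), so one center can split into as many as \(2^{\#\{i:\,c_i\neq0\}}\) centers, and nothing controls the total against \(2^n\). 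The step you call the main obstacle is therefore the whole content of the proposition, and it is left open.

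The paper avoids this issue entirely. The ingredients of \Cref{thm:b1-main}, namely \Cref{lem:lp-lattice}, \Cref{lem:lmw} and \Cref{lem:two-n}, are already stated for every \(p\geq1\) with the ratio raised to the power \(1/p\). So the paper simply reruns the same case split: \(3\leq n\leq178\), \(n\neq6\), with \(k=\lceil n/5\rceil\); \(n=6\) via \(\Gamma_{12}(B_p^6)\leq(5/6)^{1/p}\); and \(n\geq179\) via \(\vartheta_2\). It treats \(n=2\) by Lassak's bound \(\Gamma_4(B_p^2)\leq2^{-1/2}<(5/6)^{1/p}\).

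To keep your route, you would have to open up the concrete coverings and check sign compatibility for each one. This seems doable for the natural constructions. Take centers \(0\) and \(z/(n+k)\) with \(z\in\mathbb Z^n\), \(\norm{z}_1=k\), which is the configuration counted by \(M(n,k)\). For \(y\notin\frac{n}{n+k}B_1^n\), put \(w=(n+k)y\) and \(F=\sum_i\lfloor|w_i|\rfloor\). If \(F\geq k\), choose \(|z_i|\leq\lfloor|w_i|\rfloor\) with the signs of \(w\). Otherwise, round every \(|w_i|\) down and round up the \(k-F\) coordinates with the largest fractional parts, keeping the signs of \(w\). In both cases \(\norm{w-z}_1\leq n\) when \(k\leq n/2\). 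The centers \(\pm e_i/n\) behind \Cref{lem:two-n}, and \((\pm\frac12,0),(0,\pm\frac12)\) for \(n=2\), are sign compatible as well. But these are facts about specific coverings, and the one behind \Cref{lem:lmw} for \(n\geq179\) would also need checking. None of them follows from \Cref{thm:b1-main}.
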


The upper bound in \Cref{prop:fixed-p} is not uniform in \(p\). The last main
result is the following uniform estimate.

\begin{theorem}
  \label{thm:uniform-p}
  There exists a constant \(\kappa_*<1\) such that
  \begin{displaymath}
    \Gamma_{2^n}(B_p^n)\leq \kappa_*,~\forall n\geq 2,~\forall p\in[1,\infty].
  \end{displaymath}
\end{theorem}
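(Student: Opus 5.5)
The plan is to split the range of \(p\) into a bounded part \([1,p_0]\) and a large part \([p_0,\infty]\), with \(p_0\) to be chosen. On the bounded part, \Cref{prop:fixed-p} gives directly
\[
\Gamma_{2^n}(B_p^n)\leq(5/6)^{1/p}\leq(5/6)^{1/p_0}<1,
\]
uniformly in \(n\geq2\). So everything reduces to finding \(p_0\) and \(\kappa'<1\) with \(\Gamma_{2^n}(B_p^n)\leq\kappa'\) for all \(p\geq p_0\) and all \(n\geq2\). Then \(\kappa_*=\max\{(5/6)^{1/p_0},\kappa'\}\).

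For large \(p\), I would use the orthant covering that is optimal for the cube. For each sign vector \(\varepsilon\in\{-1,1\}^n\), put \(c_\varepsilon=t\varepsilon\) and consider \(c_\varepsilon+\gamma B_p^n\). A point \(x\in B_p^n\) lies in the orthant of \(\varepsilon=\operatorname{sgn}(x)\), and we need \(\norm{x-t\varepsilon}_p\leq\gamma\). The coordinatewise displacement is \(\abs{x_i}-t\). Writing \(a_i=\abs{x_i}\in[0,1]\) with \(\sum a_i^p\leq1\), the requirement becomes
\[
\sum_i\abs{a_i-t}^p\leq\gamma^p .
\]
The difficulty is that \(n\) is unbounded. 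The choice \(t=0\) gives only \(\gamma=1\), and a fixed \(t>0\) contributes about \(nt^p\) from the coordinates where \(a_i\) is small. This blows up unless \(t\) decays with \(n\). So a single pair \((t,\gamma)\) cannot be uniform in both \(n\) and \(p\), and I expect this uniformity to be the main obstacle.

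My first attempt at resolving it is to exploit that \(\Gamma_{2^n}\) is continuous in the Banach–Mazur distance, in the sense that \(K\subseteq L\subseteq \lambda K\) implies \(\Gamma_{2^n}(K)\leq\lambda\,\Gamma_{2^n}(L)\). I would compare \(B_p^n\) with the cube, but only when \(n^{1/p}\) is bounded, i.e.\ when \(p\geq c\ln n\). In that regime \(B_\infty^n\subseteq n^{1/p}B_p^n\), so \(\Gamma_{2^n}(B_p^n)\leq n^{1/p}\cdot\tfrac12\leq e^{1/c}/2<1\) once \(c>1/\ln2\). This leaves the intermediate regime \(p_0\leq p\leq c\ln n\) with \(n\) large.

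There I would look for a two-level hybrid covering, either by choosing \(t\) adaptively or by taking an interpolation approach. The most promising route, I think, is to reuse the probabilistic or sign-pattern construction behind \Cref{thm:b1-asymptotic}, transported to \(B_p^n\) via the coordinatewise map \(x\mapsto(\operatorname{sgn}x_i\,\abs{x_i}^p)_i\). This map sends \(B_p^n\) onto \(B_1^n\), and I would check that a cover of \(B_1^n\) by translates of \(\gamma B_1^n\) centered at sign-type points pulls back to a cover by translates of \(\gamma' B_p^n\), with \(\gamma'\) controlled by a uniform modulus. This is presumably the same mechanism that produces the exponent \(1/p\) in \Cref{prop:fixed-p}. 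The key check is that the pulled-back radius stays bounded away from \(1\) uniformly once the cube comparison has taken care of \(p\gtrsim\ln n\).

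If that fails, the fallback is a direct union bound. I would take random centers \(t\varepsilon\) with \(t\asymp n^{-1/p}\) and show that a positive-probability family of \(2^n\) translates covers. The counting should mirror the entropy inequality defining \(\gamma_{\rm sgn}\), giving a radius bounded by a constant below \(1\) for all large \(n\) in that range. Finitely many small \(n\) are then handled by the known strict inequality \(\Gamma_{2^n}(B_p^n)<1\) together with continuity in \(p\) on the compact interval.
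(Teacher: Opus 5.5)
\paragraph{Verdict.} Your outer structure matches the paper's, but the intermediate regime \(p_0\le p\le c\ln n\) is not proved, and that regime is the entire content of the theorem. The matching parts are:
\begin{itemize}
\item \Cref{prop:fixed-p} for \(p\) below a threshold;
\item the cube comparison \(\Gamma_{2^n}(B_p^n)\le n^{1/p}/2\) once \(p\ge \ln n/\ln\beta\) with \(\beta<2\) (the paper takes \(\beta=91/50\), consistent with your \(c>1/\ln 2\));
\item a separate argument in between.
\end{itemize}
Your continuity treatment of finitely many \(n\) is acceptable. The paper instead takes the threshold \(P=\ln N_*/\ln\beta\), so the cube bound already gives \(\beta/2\) for \(n<N_*\).

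\paragraph{Why neither route works.} Route (a): the pull-back under \(x\mapsto(\operatorname{sgn}x_i\,\abs{x_i}^p)_i\) rests on \(\abs{a-b}^p\le\abs{a^p-b^p}\) for \(a,b\) of equal sign. It therefore turns an \(\ell_1\)-cover of radius \(\gamma\) with sign-consistent centers into an \(\ell_p\)-cover of radius \(\gamma^{1/p}\). This is sharp already for the center \(0\), whose preimage set is exactly \(\gamma^{1/p}B_p^n\). Since \(\gamma\ge\Gamma_{2^n}(B_1^n)\ge 1/2\), this mechanism gives at best \(2^{-1/p}\), which tends to \(1\) as \(p\) runs up to \(c\ln n\). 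That is precisely the non-uniform mechanism behind \Cref{prop:fixed-p}, not a cure for it.

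Route (b): centers \(t\varepsilon\) of constant modulus cannot cover points with a few large coordinates. For \(x=e_1\), \(\min_{t\ge0}\bigl(\abs{1-t}^p+(n-1)t^p\bigr)=(1+s)^{-(p-1)}\) with \(s=(n-1)^{-1/(p-1)}\). This tends to \(1\) as \(n\to\infty\) for each fixed \(p\), and every fixed \(p\ge p_0\) lies in your intermediate regime once \(n\) is large. The entropy balance defining \(\gamma_{\rm sgn}\) concerns matching \(\ell_1\) mass and has no counterpart here.

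\paragraph{The missing idea.} It is the coordinate splitting in \Cref{prop:modular-middle-p-cover}.
\begin{itemize}
\item For \(x\in B_p^n\), let \(S=\{j:\abs{x_j}n^{1/p}>3\}\), so \(\#S\le 3^{-p}n\).
\item The part \(x_S\) is handled deterministically by a \(\tfrac1{20}\)-net of \(B_p^S\). The total cost over all admissible \(S\) is \(K_{n,p}\le(n+1)\exp\{n(H(3^{-p})+3^{-p}\ln41)\}\).
\item Only \(x_{S^c}\), whose coordinates are all at most \((3+\eta)n^{-1/p}\), is matched by random sign vectors \(\pm\alpha d^{-1/p}\) on \(S^c\), where \(d=n-\#S\).
\item \Cref{lem:modular-sign-hit} shows that flipping any \(\lfloor d\xi_p\rfloor\) signs among coordinates of size at most \(\tau d^{-1/p}\) still lands within \(\chi\). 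Hence the hit probability is at least \(2^{-d}\exp\{\tfrac d2\xi_p\ln\tfrac1{4\xi_p}\}\), with \(\xi_p\asymp(3/5)^p\).
\end{itemize}
Two comparisons then close the argument. First, \(3^{-p}\ll(3/5)^p\), so this gain absorbs the net budget \(K_{n,p}\). Second, \(p\le\ln n/\ln\beta\) forces \(n\xi_p\gtrsim n^\delta\). The expected number of hits \(L_{n,p}q_{n,p}\ge e^{\zeta n^\delta\ln n}/(2(n+1))\) then beats a union bound over an \(\eta n^{-1/p}\)-net of size \(\exp\{O(n\ln n)\}\). The resulting radius \(((\chi+\eta)^p+\nu^p)^{1/p}\) is at most \(91/100\).

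Your proposal lacks three things: the large/small coordinate decomposition, the sign-flip counting, and the window \(p\le\ln n/\ln\beta\) that meshes with the cube bound. Without them the intermediate regime remains unproved.
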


\medskip

In what follows, we shall use the fact that \(\Gamma_m\) is affine invariant and
monotone in \(m\): if \(m_1\leq m_2\), then \(\Gamma_{m_2}(K)\leq
\Gamma_{m_1}(K)\).

For \(1\leq p<\infty\),
\begin{equation}
  \label{eq:cube-cover}
  \Gamma_{2^n}(B_p^n)\leq \frac{n^{1/p}}2 .
\end{equation}
Indeed, with \(c_\varepsilon=\varepsilon/2\) and \(\varepsilon\in\{-1,1\}^n\),
we have
\begin{displaymath}
  B_p^n\subseteq B_\infty^n=\bigcup_{\varepsilon\in\{-1,1\}^n}\left(c_\varepsilon+\frac12 B_\infty^n\right)\subseteq\bigcup_{\varepsilon\in\{-1,1\}^n}\left(c_\varepsilon+\frac{n^{1/p}}2 B_p^n\right).
\end{displaymath}

In the sequel, the dimension \(n\) is always assumed to be at least \(2\).

\section{Asymptotic estimates for simplices and cross-polytopes}
\label{sec:asymptotic-coordinate-coverings}

The notation and parameters introduced in this section are used only in the two
asymptotic covering arguments. The choice \(\varrho=1/2\) will be used for
simplices, while the choice \(\varrho=\gamma_{\rm sgn}\) will be used for
cross-polytopes.

\subsection{Parameters and decompositions of \texorpdfstring{\(\Delta_n\)}{Delta
    n} and \texorpdfstring{\(B_p^n\)}{Bpn}}
\label{subsec:common-constants}

Fix
\begin{displaymath}
  \varrho\in\left\{\frac12,\gamma_{\rm sgn}\right\}, \quad
  \gamma\in(\varrho,1),\quad\text{and}\quad \lambda\in(1-\gamma,1-\varrho)\subset\left(0,\frac12\right).
\end{displaymath}
Let \(\eta\) be a number in \((0,1)\) such that
\begin{displaymath}
  \alpha:=\lambda(1-\eta)>1-\gamma. 
\end{displaymath}
Pick
\begin{displaymath}
  \alpha'\in (1-\gamma,\alpha).
\end{displaymath}
Set \(u_0=\gamma\) and
\begin{displaymath}
  u_{j+1}=\gamma+\alpha' u_j,~ \forall j\in[0,\infty)\cap \mathbb{Z}.
\end{displaymath}
Equivalently,
\begin{displaymath}
  u_j=\gamma\frac{1-(\alpha')^{j+1}}{1-\alpha'},\quad \forall j\in[0,\infty)\cap \mathbb{Z}.
\end{displaymath}
Since \(\alpha'>1-\gamma\), \(\{u_j\}_{j=0}^\infty\) increases to \(\gamma/(1-\alpha')>1\). Hence the integer
\begin{displaymath}
  N=\min\{j\in [0,\infty)\cap \mathbb{Z}:\ u_j\geq1\}
\end{displaymath}
is well defined. Put
\begin{displaymath}
  t_j=u_j,~\forall j\in[0,N)\cap \mathbb{Z} \quad \text{and}\quad t_N=1.
\end{displaymath}
Then
\begin{equation}
  \label{eq:common-layers}
  \gamma=t_0<t_1<\cdots<t_N=1.
\end{equation}
Set
\begin{displaymath}
  d_j=t_j-\gamma,~\forall j\in[N].
\end{displaymath}
Then
\begin{gather*}
  d_j=t_j-\gamma=\alpha't_{j-1}<\alpha t_{j-1},~\forall j\in[N-1],\\
  d_N=1-\gamma\leq\alpha'u_{N-1}<\alpha t_{N-1}.
\end{gather*}
Hence
\begin{equation}
  \label{eq:common-layer-ineq}
  d_j<\alpha t_{j-1}=\lambda(1-\eta)t_{j-1},~\forall j\in[N].
\end{equation}

Choose
\begin{equation}
  \label{eq:theta-a}
  \theta\in\left(0,\frac{\eta}{4}\right]\quad \text{and}\quad a\in\left(0,\frac{\eta\gamma}{4}\right]
\end{equation}
such that
\begin{displaymath}
  (1+\theta)a<1-\gamma.
\end{displaymath}
Let
\begin{displaymath}
  \mathcal G_n=\{(1-\gamma)(1+\theta)^{-k}:0\leq k\leq J_n\},
\end{displaymath}
where \(J_n\) is the least integer such that
\begin{displaymath}
  (1-\gamma)(1+\theta)^{-J_n}\leq \frac{a}{n}.
\end{displaymath}
Clearly,
\begin{displaymath}
  J_n=\left\lceil\frac{\ln((1-\gamma) n/a)}{\ln(1+\theta)}\right\rceil =O(\ln n).
\end{displaymath}
The constants \(\lambda,\eta,\alpha,\alpha',\theta,a\), and \(N\) depend only on
\(\gamma\) and \(\varrho\).

For \(x,y\in\R^n\), write \(x\leq y\) if \(x_i\leq y_i,~\forall i\in[n]\). Put
\begin{gather*}
  P(n,\gamma)=\{x\in\R^n:\ \gamma\leq\norm{x}_1\leq1,\ 0\leq x_i\leq 1-\gamma,~\forall i\in[n]\},\\
  Q(n,\gamma)=\{x\in\R^n:\ \gamma\leq\norm{x}_1\leq1,\ |x_i|\leq1-\gamma,~\forall i\in[n]\}.
\end{gather*}

For each \(i\in[n]\), let \(e_i\) be the \(i\)-th canonical basis vector of \(\mathbb{R}^n\).

\begin{lemma}
  \label{lem:common-boundary-decomposition}
  We have
  \begin{gather}
    \label{eq:common-simplex-decomp}
    \Delta_n= P(n,\gamma) \cup\gamma\Delta_n\cup \bigcup_{i=1}^n((1-\gamma) e_i+\gamma\Delta_n),\\
    \label{eq:common-cross-decomp}
    B_1^n= Q(n,\gamma)\cup\gamma B_1^n\cup \bigcup_{i=1}^n\bigl(((1-\gamma) e_i+\gamma B_1^n)\cup(-(1-\gamma) e_i+\gamma B_1^n)\bigr).
  \end{gather}
\end{lemma}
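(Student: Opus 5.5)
We prove the two set identities by double inclusion. In each case the right-hand side is contained in the left-hand side, and then a short case analysis on a point \(x\) of the left-hand side places it in one of the pieces. Throughout, \(\gamma\in(0,1)\) is the parameter fixed in \Cref{subsec:common-constants}; only \(0<\gamma<1\) is used.

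\emph{Inclusion of the right-hand sides.} For \eqref{eq:common-simplex-decomp}, we have \(P(n,\gamma)\subseteq\Delta_n\), because its points have nonnegative coordinates and \(\norm{x}_1\le1\). Also \(\gamma\Delta_n\subseteq\Delta_n\), since \(\Delta_n\) is convex and contains \(0\). Finally \((1-\gamma)e_i+\gamma\Delta_n=(1-\gamma)e_i+\gamma\Delta_n\) is a convex combination of \(e_i\in\Delta_n\) and \(\Delta_n\), so it lies in \(\Delta_n\). The same argument works for \eqref{eq:common-cross-decomp}, using \(\pm e_i\in B_1^n\), the convexity of \(B_1^n\), and the fact that \(Q(n,\gamma)\subseteq B_1^n\) by definition.

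\emph{Covering \(\Delta_n\).} Let \(x\in\Delta_n\). If \(\norm{x}_1=\sum x_i\le\gamma\), then \(x\in\gamma\Delta_n\). Suppose instead \(\sum x_i>\gamma\). If every coordinate satisfies \(x_i\le1-\gamma\), then \(x\in P(n,\gamma)\). Otherwise some coordinate has \(x_i>1-\gamma\). Set \(y=x-(1-\gamma)e_i\). Then \(y\ge0\), since \(y_i=x_i-(1-\gamma)>0\) and the other coordinates are unchanged. Moreover \(\sum y_j=\sum x_j-(1-\gamma)\le1-(1-\gamma)=\gamma\). Hence \(y\in\gamma\Delta_n\), and so \(x\in(1-\gamma)e_i+\gamma\Delta_n\).

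\emph{Covering \(B_1^n\).} The argument is the same, with signs. Let \(x\in B_1^n\). If \(\norm{x}_1\le\gamma\), then \(x\in\gamma B_1^n\). If \(\norm{x}_1>\gamma\) and every \(|x_i|\le1-\gamma\), then \(x\in Q(n,\gamma)\). Otherwise some \(|x_i|>1-\gamma\); let \(s=\operatorname{sgn}(x_i)\) and \(y=x-s(1-\gamma)e_i\). Then \(|y_i|=|x_i|-(1-\gamma)\), because \(|x_i|>1-\gamma\) means no sign change occurs. Therefore \(\norm{y}_1=\norm{x}_1-(1-\gamma)\le\gamma\), so \(x\in s(1-\gamma)e_i+\gamma B_1^n\).

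There is no real obstacle here. The only point needing care is checking that subtracting \(s(1-\gamma)e_i\) reduces the \(i\)-th coordinate's absolute value by exactly \(1-\gamma\) (and, in the simplex case, keeps it nonnegative). This holds precisely because \(|x_i|>1-\gamma\); that same condition is what separates this case from \(P(n,\gamma)\) and \(Q(n,\gamma)\).
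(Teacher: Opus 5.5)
Your proof is correct, and for the cross-polytope identity it is essentially the paper's argument: peel off $\operatorname{sgn}(x_i)(1-\gamma)e_i$ from a coordinate with $|x_i|>1-\gamma$. The only difference is that you prove the simplex identity directly by the same unsigned case analysis (and spell out the easy reverse inclusions via convexity), whereas the paper simply cites \cite{YuGaoHeWu2023} for it.
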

\begin{proof}
  The equality \eqref{eq:common-simplex-decomp} is proved in
  \cite{YuGaoHeWu2023}. The right-hand side of \eqref{eq:common-cross-decomp} is
  contained in \(B_1^n\). Conversely, let \(x\in B_1^n\setminus(\gamma B_1^n\cup
  Q(n,\gamma))\). Then \(\norm{x}_1>\gamma\), and \(|x_i|>1-\gamma\) for some
  \(i\in[n]\). Then
  \begin{displaymath}
    \norm{x-(1-\gamma)\operatorname{sgn}(x_i)
      e_i}_1=\norm{x}_1-(1-\gamma)\leq \gamma,
  \end{displaymath}
  or, equivalently, \( x\in(1-\gamma)\operatorname{sgn}(x_i) e_i+\gamma B_1^n\).
\end{proof}

\begin{lemma}
  \label{lem:simplex-containment}
  If \(x\in P(n,\gamma)\) and \(u\in\R_+^n\) are two points satisfying
  \begin{displaymath}
    u\leq x \quad\text{and} \quad\norm{u}_1\geq\norm{x}_1-\gamma,
  \end{displaymath}
  then \(x\in u+\gamma\Delta_n\).
\end{lemma}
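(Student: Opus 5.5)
The claim reduces to checking the inequalities that define \(\gamma\Delta_n\). Since \(\Delta_n=\{y\in\R^n:\ y\geq0,\ \sum_i y_i\leq1\}\), we have
\begin{displaymath}
  u+\gamma\Delta_n=\{x\in\R^n:\ x-u\geq0,\ \norm{x-u}_1\leq\gamma\}.
\end{displaymath}
So it suffices to show that the vector \(y:=x-u\) has nonnegative coordinates and satisfies \(\sum_{i=1}^n y_i\leq\gamma\).

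Nonnegativity is immediate from the hypothesis \(u\leq x\), which gives \(y_i=x_i-u_i\geq0\) for every \(i\in[n]\). For the sum, note that \(x\in P(n,\gamma)\) implies \(x\geq0\), and \(u\in\R_+^n\) gives \(u\geq0\). On the nonnegative orthant the \(\ell_1\)-norm is simply the coordinate sum. Hence
\begin{displaymath}
  \sum_{i=1}^n y_i=\sum_{i=1}^n x_i-\sum_{i=1}^n u_i=\norm{x}_1-\norm{u}_1\leq\gamma,
\end{displaymath}
where the last inequality is exactly the hypothesis \(\norm{u}_1\geq\norm{x}_1-\gamma\). Therefore \(y\in\gamma\Delta_n\), that is, \(x\in u+\gamma\Delta_n\).

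There is no real obstacle here. The only point needing care is replacing \(\norm{\cdot}_1\) by the plain coordinate sum, which is justified because both \(x\) and \(u\) lie in \(\R_+^n\). The remaining constraints of \(P(n,\gamma)\), namely \(x_i\leq1-\gamma\) and \(\norm{x}_1\geq\gamma\), are not needed for this containment; presumably they matter only later, when suitable \(u\) are selected from a finite net.
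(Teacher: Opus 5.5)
Your proof is correct and is essentially the paper's argument: both observe that \(x-u\geq0\) and that \(\norm{x-u}_1=\norm{x}_1-\norm{u}_1\leq\gamma\), which is valid because \(0\leq u\leq x\). You are also right that the remaining constraints of \(P(n,\gamma)\) are not used here.
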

\begin{proof}
  Just note that \(x-u\geq0\) and that
  \(\norm{x-u}_1=\norm{x}_1-\norm{u}_1\leq\gamma\).
\end{proof}

\subsection{Coordinate multiplicities and assignment probabilities}
\label{subsec:finite-coordinate-values}

Put
\begin{displaymath}
  Q=\left(\{0\}\cup\mathcal G_n\cup(-\mathcal G_n)\right)^n\quad\text{and}\quad
  Q^+=\left( \{0\}\cup\mathcal G_n \right)^n.
\end{displaymath}
For a finite set \(A\), denote by \(\#A\) its cardinality.

Let \(q\in Q^+\setminus\{0\}\),
\begin{displaymath}
  \beta_1>\beta_2>\cdots>\beta_L>0
\end{displaymath}
be the nonzero coordinate values of \(q\), and
\begin{displaymath}
  m_\ell=\#\{i\in[n]:\ q_i=\beta_\ell\},\quad \forall\ell\in[L].
\end{displaymath}
Set
\begin{displaymath}
  \mathrm{mult}(q)=(\beta_1,\ldots,\beta_L;m_1,\ldots,m_L).
\end{displaymath}
The multiplicity of \(0\) is \(n-\sum_{\ell\in[L]} m_\ell\). For convenience,
set \(\mathrm{mult}(0)=(0)\). Put
\begin{displaymath}
  S_\ell=\sum_{i\in[\ell]} m_i, \quad \sigma_\ell=\frac{S_\ell}{n},~\forall \ell\in[L], \quad\text{and}\quad \sigma_0=0.
\end{displaymath}
Then
\begin{displaymath}
  L\leq J_n+1=O(\ln n).
\end{displaymath}

\begin{lemma}
  \label{lem:multiplicity-count}
  We have
  \begin{gather*}
    \# Q^+=(J_n+2)^n=\exp\{O(n\ln\ln n)\},\\
    \# \left\{ \mathrm{mult}(q):\ q\in Q^+\right\}=\binom{n+J_n+1}{J_n+1}=e^{o(n)},\\
    \# Q=(2J_n+3)^n=\exp\{O(n\ln\ln n)\}.
  \end{gather*}
\end{lemma}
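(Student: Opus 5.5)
The plan is to count each set directly, using only that \(\#\mathcal G_n=J_n+1\) and \(J_n=O(\ln n)\).

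\emph{Cardinalities of \(Q^+\) and \(Q\).} Each coordinate of a point of \(Q^+\) ranges over \(\{0\}\cup\mathcal G_n\). The elements \((1-\gamma)(1+\theta)^{-k}\), \(0\leq k\leq J_n\), are distinct and positive, so this set has \(J_n+2\) elements and \(\#Q^+=(J_n+2)^n\). In the same way, \(\{0\}\cup\mathcal G_n\cup(-\mathcal G_n)\) is a disjoint union of sizes \(1\), \(J_n+1\) and \(J_n+1\), so \(\#Q=(2J_n+3)^n\). Since \(J_n=O(\ln n)\), we get \(\ln(J_n+2)\) and \(\ln(2J_n+3)\) equal to \(O(\ln\ln n)\), which gives both \(\exp\{O(n\ln\ln n)\}\) bounds.

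\emph{Counting multiplicity data.} The key observation is that \(\mathrm{mult}(q)\) is determined by, and determines, the function \(g\mapsto\#\{i\in[n]:q_i=g\}\) on \(\mathcal G_n\). The values \(\beta_\ell\) are exactly the elements of \(\mathcal G_n\) where this count is positive, and the \(m_\ell\) are the counts there. The remaining coordinates equal \(0\), and their number is \(n\) minus the sum of the counts.

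So multiplicity data correspond bijectively to tuples \((c_g)_{g\in\mathcal G_n}\) of nonnegative integers with \(\sum_g c_g\leq n\). Every such tuple is realized by some \(q\in Q^+\): put \(c_g\) coordinates equal to \(g\) and the rest equal to \(0\). The zero tuple corresponds to \(q=0\), i.e. \(\mathrm{mult}(0)=(0)\).

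Adding a slack variable \(c_0=n-\sum_g c_g\), these are the weak compositions of \(n\) into \(J_n+2\) parts. By stars and bars their number is \(\binom{n+J_n+1}{J_n+1}\).

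\emph{The \(e^{o(n)}\) estimate.} This is the only step needing care, though it is routine. Set \(k=J_n+1=O(\ln n)\) and use \(\binom{n+k}{k}\leq\bigl(e(n+k)/k\bigr)^k\). Then
\[
\ln\binom{n+k}{k}\leq k\bigl(1+\ln(n+k)\bigr)=O\bigl((\ln n)^2\bigr)=o(n).
\]
I expect no real obstacle. The main point to verify is the bijection, including the convention for \(q=0\) and that the \(\beta_\ell\) are listed in decreasing order, so that distinct count vectors give distinct \(\mathrm{mult}\) tuples.
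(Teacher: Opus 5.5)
Your proposal is correct and follows essentially the same route as the paper. You count \(\#Q^+\) and \(\#Q\) directly, identify multiplicity data with nonnegative integer solutions of \(y_0+y_1+\cdots+y_{J_n+1}=n\) (your slack-variable bijection spells out what the paper calls clear), and apply \(\binom{m}{k}\leq(em/k)^k\) with \(k=J_n+1=O(\ln n)\) to get \(\exp\{O((\ln n)^2)\}=e^{o(n)}\).
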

\begin{proof}
  The first and third equalities are clear. Clearly, \(\# \left\{
    \mathrm{mult}(q):\ q\in Q^+\right\}\) is precisely the number of nonnegative
  integer solutions of the equation
  \begin{displaymath}
    y_0+y_1+\cdots+y_{J_n+1}=n.
  \end{displaymath}
  Thus
  \begin{displaymath}
    \# \left\{ \mathrm{mult}(q):\ q\in Q^+\right\}=\binom{n+J_n+1}{J_n+1}.
  \end{displaymath}
  Since \(J_n=O(\ln n)\), the standard bound
  \begin{displaymath}
    \binom mk\leq\left(\frac{em}{k}\right)^k
  \end{displaymath}
  gives
  \begin{displaymath}
    \binom{n+J_n+1}{J_n+1} \leq \left(\frac{e(n+J_n+1)}{J_n+1}\right)^{J_n+1} =\exp\{O((\ln n)^2)\}=e^{o(n)}.\qedhere
  \end{displaymath}
\end{proof}

In the following we use the convention that
\begin{displaymath}
  \sum\limits_{i\in \emptyset}y_i=0.
\end{displaymath}

\begin{lemma}
  \label{lem:coordinate-replacement}
  Let \(y\in P(n,\gamma)\). Define \(q=q(y)\in Q^+\) by
  \begin{displaymath}
    q_i=
    \begin{cases}
      0,&y_i<\flatfrac{a}{n},\\
      \max\{\beta\in\mathcal G_n:\ \beta\leq y_i\},&y_i\geq \flatfrac{a}{n}.
    \end{cases}
  \end{displaymath}
  Then
  \begin{displaymath}
    0\leq q\leq y\quad\text{and}\quad\norm{q}_1\geq\frac{\norm{y}_1-a}{1+\theta}\geq(1-\eta)\norm{y}_1.
  \end{displaymath}
\end{lemma}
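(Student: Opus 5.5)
The plan is a direct coordinatewise comparison followed by summation. First I will check that \(q\) is well defined. For \(y\in P(n,\gamma)\) we have \(0\leq y_i\leq 1-\gamma\), which is the largest element of \(\mathcal G_n\). The smallest element of \(\mathcal G_n\) is \((1-\gamma)(1+\theta)^{-J_n}\leq a/n\). So whenever \(y_i\geq a/n\), the set \(\{\beta\in\mathcal G_n:\ \beta\leq y_i\}\) is nonempty and finite, and its maximum exists. Both cases of the definition give \(0\leq q_i\leq y_i\), which proves the first claim \(0\leq q\leq y\).

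For the norm bound I will show that each coordinate with \(y_i\geq a/n\) satisfies \((1+\theta)q_i\geq y_i\). Write \(q_i=\beta=(1-\gamma)(1+\theta)^{-k}\). If \(k=0\), then \(\beta=1-\gamma\geq y_i\), so \(q_i=y_i\) and the inequality holds. If \(k\geq1\), the next larger grid value \((1+\theta)\beta\) also lies in \(\mathcal G_n\). By maximality of \(\beta\) it must exceed \(y_i\), so \((1+\theta)q_i>y_i\).

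Coordinates with \(y_i<a/n\) contribute \(q_i=0\), and their total \(y\)-mass is less than \(n\cdot a/n=a\). Summing over both groups gives
\begin{displaymath}
  (1+\theta)\norm{q}_1\geq\sum_{y_i\geq a/n}y_i\geq\norm{y}_1-a,
\end{displaymath}
which is the first inequality.

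For the second inequality I will use \(\norm{y}_1\geq\gamma\), which holds because \(y\in P(n,\gamma)\). By \eqref{eq:theta-a}, \(a\leq\eta\gamma/4\leq(\eta/4)\norm{y}_1\) and \(\theta\leq\eta/4\). Hence
\begin{displaymath}
  \frac{\norm{y}_1-a}{1+\theta}\geq\frac{(1-\eta/4)\norm{y}_1}{1+\eta/4}.
\end{displaymath}
It then suffices to check that \(1-\eta/4\geq(1-\eta)(1+\eta/4)=1-\tfrac34\eta-\eta^2/4\). This reduces to \(\eta/2+\eta^2/4\geq0\), which is true. The only step requiring care is the maximality comparison with the neighbouring grid point; the rest is bookkeeping.
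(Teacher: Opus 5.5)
Your proof is correct and is the paper's argument: split the coordinates at the threshold \(a/n\), use \(q_i\geq y_i/(1+\theta)\) on the large coordinates and total mass at most \(a\) on the small ones, then conclude from \(\theta\leq\eta/4\) and \(a\leq\eta\gamma/4\leq(\eta/4)\norm{y}_1\). Beyond the paper's version, you also verify that \(q\) is well defined and justify the grid-maximality step giving \((1+\theta)q_i\geq y_i\), both of which the paper states without proof.
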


\begin{proof}
  The definition gives \(0\leq q\leq y\). Put
  \begin{displaymath}
    I_1=\left\{i\in[n]:\ y_i<\frac{a}{n}\right\}\quad\text{and}\quad
    I_2=\left\{i\in[n]:\ y_i\geq \frac{a}{n}\right\}.
  \end{displaymath}
  We have
  \begin{displaymath}
    \sum\limits_{i\in I_1}y_i\leq a\quad \text{and}\quad q_i\geq
    \frac{y_i}{1+\theta},~\forall i\in I_2.
  \end{displaymath}
  Thus
  \begin{displaymath}
    \norm{q}_1=\sum_{i\in I_2}q_i \geq \frac{1}{1+\theta}\sum_{i\in I_2}y_i \geq \frac{\norm{y}_1-a}{1+\theta}.
  \end{displaymath}
  By \eqref{eq:theta-a}, we have
  \begin{displaymath}
    \frac{\norm{y}_1-a}{1+\theta} \geq \frac{1-\eta/4}{1+\eta/4}\norm{y}_1 \geq(1-\eta)\norm{y}_1.\qedhere
  \end{displaymath}
\end{proof}

Fix \(q\in Q^+\setminus\{0\}\) with
\(\mathrm{mult}(q)=(\beta_1,\ldots,\beta_L;m_1,\ldots,m_L)\), and suppose that
\begin{equation}
  \label{eq:list-mass-condition}
  d\in (0,\lambda\norm{q}_1).
\end{equation}
Put
\begin{displaymath}
  c=\frac{d}{\lambda\norm{q}_1}<1.
\end{displaymath}
For each \(\ell\in[L]\), set
\begin{displaymath}
  r_\ell=\lceil\lambda m_\ell\rceil,\quad
  R_\ell=\sum_{i\in[\ell]}r_i,\quad\text{and}\quad R_0=0.
\end{displaymath}
For each \(\ell\in[L]\), let \(\mathcal B_\ell\) be the multiset containing
\(\lfloor\lambda m_\ell\rfloor\) entries equal to \(c\beta_\ell\), and, if
\(\lambda m_\ell\notin\mathbb Z\), it contains one further entry
\begin{displaymath}
  c(\lambda m_\ell-\lfloor\lambda m_\ell\rfloor)\beta_\ell.
\end{displaymath}
Let \(A_\ell\) index the elements of \(\mathcal B_\ell\). Without loss of
generality, we may assume that elements in \(\{A_\ell : \ \ell\in [L]\}\) are
pairwise disjoint. Put \(A=\bigsqcup_{\ell\in[L]} A_\ell\), and write \(b_i\)
for the entry indexed by \(i\in A\). Since \(0<\lambda<1/2\) and \(m_\ell\geq
1\), we have
\begin{displaymath}
  r_\ell\leq m_\ell\quad\text{and}\quad R_\ell\leq S_\ell,~\forall \ell\in[L].
\end{displaymath}
Moreover,
\begin{displaymath}
  \#\mathcal B_\ell=r_\ell\quad\text{and}\quad
  \sum_{i\in A}b_i=\sum_{\ell\in[L]}c\lambda m_\ell\beta_\ell=d.
\end{displaymath}
Here and below,
\begin{displaymath}
  (b)_r:=b(b-1)\cdots(b-r+1).
\end{displaymath}
Choose a random injection \(\iota:A\to[n]\) satisfying
\begin{displaymath}
  \mathbb P\{\iota=\iota_0\}=\frac{1}{(n)_{\#A}},~\text{for every injection } \iota_0:A\to[n]. 
\end{displaymath}
Define a random vector \(u\) by (cf. \Cref{fig:coordinate-assignment-event})
\begin{displaymath}
  u_i=\sum_{j\in \iota^{-1}(\{i\})}b_j,~\forall i\in[n].
\end{displaymath}
Clearly, \(\norm{u}_1=d\).

\begin{figure}[htbp]
  \centering \resizebox{\textwidth}{!}{%
    \begin{tikzpicture}[ font=\small, block/.style={draw, rounded corners=2pt,
        align=center, minimum width=2.2cm, minimum height=0.9cm},
      box/.style={draw, rounded corners=2pt, align=center, inner sep=6pt},
      arrow/.style={-{Stealth[length=2.2mm]}, thick}, every node/.style={inner
        sep=2pt} ]
      \node[box, minimum width=10cm] (multdef) at (4.3,3.3)
      {\(\mathrm{mult}(q)=(\beta_1,\ldots,\beta_L;m_1,\ldots,m_L)\), \quad
        \(\beta_1>\beta_2>\cdots>\beta_L>0\).};

      \node[box, fill=gray!4, minimum width=3.5cm, minimum height=3.6cm]
      (profile) at (0,0) {}; \node[align=center] at (0,0.45)
      {\(\begin{array}{c|c}
        \text{value} & \text{mult.}\\
        \hline
        \beta_1 & m_1\\
        \beta_2 & m_2\\
        \vdots & \vdots\\
        \beta_L & m_L\\\hline
      \end{array}\)};
    \node[align=center] at (0,-1.35)
    {\(S_\ell=m_1+\cdots+m_\ell\)};

    \node[block, fill=blue!7] (A1) at (3.8,1.45) {\(A_1\)\\ \(i\in A_1,\ b_i\)\\
      \(r_1\) elements}; \node[block, fill=green!7] (A2) at (3.8,0) {\(A_2\)\\
      \(i\in A_2,\ b_i\)\\ \(r_2\) elements}; \node[align=center] at (3.8,-0.95)
    {\(\vdots\)}; \node[block, fill=orange!10] (AL) at (3.8,-2.0) {\(A_L\)\\
      \(i\in A_L,\ b_i\)\\ \(r_L\) elements}; \node[align=center] at (3.8,2.35)
    {\(A=A_1\sqcup A_2\sqcup\cdots\sqcup A_L\)};

    \node[block, fill=blue!7, minimum width=4cm] (I1) at (8.5,1.45)
    {\(\mathcal I_1=\{k\in[n]:q_k\geq\beta_1\}\)\\
      \(\#\mathcal I_1=S_1\)}; \node[block, fill=green!7, minimum width=4cm]
    (I2) at (8.5,0)
    {\(\mathcal I_2=\{k\in[n]:q_k\geq\beta_2\}\)\\
      \(\#\mathcal I_2=S_2\)}; \node[align=center] at (8.5,-0.95) {\(\vdots\)};
    \node[block, fill=orange!10, minimum width=4cm] (IL) at (8.5,-2.0)
    {\(\mathcal I_L=\{k\in[n]:q_k\geq\beta_L\}\)\\
      \(\#\mathcal I_L=S_L\)}; \node[align=center] at (8.5,2.35) {admissible
      coordinate sets};

    \draw[arrow, blue!70!black] (A1.east) -- node[above] {\(i\mapsto\iota(i)\)}
    (I1.west); \draw[arrow, green!50!black] (A2.east) -- node[above]
    {\(i\mapsto\iota(i)\)} (I2.west); \draw[arrow, orange!80!black] (AL.east) --
    node[above] {\(i\mapsto\iota(i)\)} (IL.west);

    \node[box, minimum width=9.2cm, fill=gray!5] at (4.3,-3.35)
    {\(E(q)=\{\iota(A_\ell)\subseteq\mathcal I_\ell \text{ for every
      }\ell\in[L]\}\), where \(\iota:A\to[n]\) is injective.};
  \end{tikzpicture}
}
\caption{The event \(E(q)\) in \Cref{lem:coordinate-assignment-probability}.}
\label{fig:coordinate-assignment-event}
\end{figure}

\begin{lemma}
  \label{lem:coordinate-assignment-probability}
  Let \(E(q)\) be the event given by (cf.
  \Cref{fig:coordinate-assignment-event})
  \begin{displaymath}
    E(q):=
    \left\{\iota(A_\ell)\subseteq\{i\in[n]:q_i\geq\beta_\ell\},~\forall \ell\in[L]\right\}.
  \end{displaymath}
  Then
  \begin{equation}
    \label{eq:Pq}
    P(q):=\mathbb P(E(q))=\prod_{\ell\in[L]}
    \frac{(S_\ell-R_{\ell-1})_{r_\ell}}{(n-R_{\ell-1})_{r_\ell}}.
  \end{equation}
  Moreover, \(E(q)\subseteq\{u\leq q\}\).
\end{lemma}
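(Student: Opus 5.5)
Since \(\iota\) is uniform among injections \(A\to[n]\), I will reveal it block by block, in the order \(A_1,A_2,\ldots,A_L\), and use the chain rule. Put \(\mathcal I_\ell=\{i\in[n]:q_i\geq\beta_\ell\}\). Because \(\beta_1>\cdots>\beta_L\) are exactly the nonzero values of \(q\), we have \(\#\mathcal I_\ell=S_\ell\), and the sets are nested: \(\mathcal I_1\subseteq\mathcal I_2\subseteq\cdots\subseteq\mathcal I_L\). Write \(E_\ell\) for the event \(\{\iota(A_k)\subseteq\mathcal I_k,\ \forall k\leq\ell\}\). Then \(P(q)=\prod_{\ell}\mathbb P(E_\ell\mid E_{\ell-1})\), with \(E_0\) the sure event.

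To compute the conditional factor, fix any injective restriction \(\iota|_{A_1\sqcup\cdots\sqcup A_{\ell-1}}\) that is compatible with \(E_{\ell-1}\). The uniform distribution on injections then makes \(\iota|_{A_\ell}\) a uniformly random injection of the \(r_\ell\) elements of \(A_\ell\) into the remaining \(n-R_{\ell-1}\) points. On \(E_{\ell-1}\), every earlier image lies in \(\mathcal I_k\subseteq\mathcal I_\ell\). Hence exactly \(R_{\ell-1}\) points of \(\mathcal I_\ell\) are already used, and \(S_\ell-R_{\ell-1}\) points of \(\mathcal I_\ell\) remain free. This uses \(R_{\ell-1}\leq S_{\ell-1}\leq S_\ell\), which was noted above. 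The conditional probability that all of \(A_\ell\) lands in \(\mathcal I_\ell\) is therefore \((S_\ell-R_{\ell-1})_{r_\ell}/(n-R_{\ell-1})_{r_\ell}\). This value does not depend on which compatible restriction we fixed, so averaging over them gives the same factor. Multiplying the factors yields \eqref{eq:Pq}. The falling-factorial convention automatically gives \(0\) if \(S_\ell-R_{\ell-1}<r_\ell\), although \(R_\ell\leq S_\ell\) shows this cannot happen.

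For the inclusion \(E(q)\subseteq\{u\leq q\}\), fix \(i\in[n]\). Because \(\iota\) is injective, at most one \(j\in A\) has \(\iota(j)=i\). So \(u_i\) is either \(0\), which is at most \(q_i\) since \(q\geq0\), or a single entry \(b_j\) with \(j\in A_\ell\) for some \(\ell\). In the second case, on \(E(q)\) we have \(q_i\geq\beta_\ell\). Every entry of \(\mathcal B_\ell\) is at most \(c\beta_\ell\leq\beta_\ell\), because \(c<1\) and the fractional entry carries a factor less than \(1\). Hence \(u_i=b_j\leq\beta_\ell\leq q_i\).

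The only delicate step is the conditioning argument. One has to check that the count of free admissible slots is exactly \(S_\ell-R_{\ell-1}\), uniformly over all earlier configurations. This is where the nestedness of the sets \(\mathcal I_\ell\) is essential: the decreasing order of the \(\beta_\ell\) is what guarantees that all previously used coordinates lie inside \(\mathcal I_\ell\).
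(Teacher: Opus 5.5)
Your proof is correct and takes the same approach as the paper. It reveals \(\iota\) block by block on \(A_1,\ldots,A_L\), counts \(S_\ell-R_{\ell-1}\) free admissible coordinates using the nestedness of the sets \(\{i:q_i\geq\beta_\ell\}\), and gets \(u\leq q\) from \(b_j\leq c\beta_\ell<\beta_\ell\), while also spelling out details the paper leaves implicit (uniformity of the conditional law of \(\iota|_{A_\ell}\), and injectivity forcing each \(u_i\) to be \(0\) or a single entry \(b_j\)).
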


\begin{proof}
  Order the coordinates of \(q\) by decreasing value, and reveal the random
  injection on \(A_1,\ldots,A_L\). Conditioned on the requirements for
  \(A_1,\ldots,A_{\ell-1}\), the allowed coordinates for \(A_\ell\) form a set
  of cardinality \(S_\ell-R_{\ell-1}\). Hence the corresponding conditional
  probability is
  \begin{displaymath}
    \frac{(S_\ell-R_{\ell-1})_{r_\ell}}{(n-R_{\ell-1})_{r_\ell}}.
  \end{displaymath}
  Multiplication over \(\ell\in[L]\) gives \eqref{eq:Pq}. If \(i\in A_\ell\),
  then \(b_i\leq c\beta_\ell<\beta_\ell\). Hence \(E(q)\) implies \(u\leq q\).
\end{proof}

For fixed \(q\), \(\mathrm{mult}(q)\), and \(d\),
\Cref{lem:coordinate-assignment-probability} gives
\begin{displaymath}
  \mathbb P\{u\leq q,\ \norm{u}_1=d\}\geq P(q).
\end{displaymath}

\begin{lemma}
  \label{lem:integral-exponent}
  Put
  \begin{displaymath}
    \Theta_n=\left\lceil\frac{2(J_n+1)}{1-\lambda}\right\rceil
  \end{displaymath}
  and set
  \begin{align*}
    \varepsilon_n=\frac{1}{n}\bigg[
    &(\lambda\Theta_n+J_n+1)\ln n+\lambda\Theta_n\ln\frac{n}{1-\lambda}
      +(J_n+1)\ln n\\
    &+\frac{4\lambda(J_n+1)}{1-\lambda}(1+\ln n)
      \bigg].
  \end{align*}
  Then \(\varepsilon_n\to0\) and
  \begin{displaymath}
    -\frac{1}{n}\ln P(q)\leq-\ln(1-\lambda)+\varepsilon_n,\quad
    \forall q\in Q^+\setminus\{0\}.
  \end{displaymath}
\end{lemma}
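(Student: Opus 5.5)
The plan is to turn the product \eqref{eq:Pq} into a difference of factorial logarithms, replace these by \(f(x)=x\ln x\) up to controlled errors, and then prove a continuous inequality in \(S_\ell\) and \(R_\ell\). The step I am least sure of is that continuous inequality, the third paragraph below.

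\emph{Telescoping.} Since \((x)_r=x!/(x-r)!\), the denominators in \eqref{eq:Pq} telescope:
\begin{displaymath}
  \prod_{\ell\in[L]}(n-R_{\ell-1})_{r_\ell}=\frac{n!}{(n-R_L)!}.
\end{displaymath}
Hence
\begin{displaymath}
  -\ln P(q)=\ln\frac{n!}{(n-R_L)!}-\sum_{\ell\in[L]}\ln\frac{(S_\ell-R_{\ell-1})!}{(S_\ell-R_\ell)!}.
\end{displaymath}
This is well defined because \(R_\ell\le S_\ell\). I would then use \(\ln x!=f(x)-x+O(\ln(x+1))\). The linear parts cancel exactly, because both sides of the identity lose \(R_L=\sum_\ell r_\ell\) in total. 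Each of the at most \(L+1\le J_n+2\) factorial ratios then contributes an error \(O(\ln n)\), which gives the \((J_n+1)\ln n\) terms in \(\varepsilon_n\).

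\emph{Replacing \(r_\ell\) by \(\lambda m_\ell\).} Next I would swap \(r_\ell=\lceil\lambda m_\ell\rceil\) for \(\lambda m_\ell\), i.e.\ \(R_\ell\) for \(\lambda S_\ell\). This costs at most one extra factor per level, so \(O(L\ln n)\) in total. The perturbation is only harmless while \(S_\ell-R_\ell\) is not tiny compared with \(n\), so I would split the levels at the threshold \(\Theta_n\).
\begin{itemize}
  \item Levels with \(m_\ell<\Theta_n\) contribute at most \(r_\ell\ln n\le(\lambda\Theta_n+1)\ln n\) each. This follows directly from each factor being \(\geq 1/n\), since the numerator is at least \(S_\ell-R_\ell+1\ge1\).
  \item Levels with \(m_\ell\ge\Theta_n\ge 2(J_n+1)/(1-\lambda)\) satisfy \(S_\ell-R_\ell\ge(1-\lambda)S_\ell-\ell\ge\frac{1-\lambda}{2}S_\ell\). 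This keeps the logarithmic perturbations at \(O(\ln n)\) per level and explains the \(\frac{4\lambda(J_n+1)}{1-\lambda}(1+\ln n)\) and \(\lambda\Theta_n\ln\frac{n}{1-\lambda}\) terms.
\end{itemize}
Collecting these bounds should give exactly the listed \(\varepsilon_n\). That \(\varepsilon_n\to0\) is then immediate from \(J_n,\Theta_n=O(\ln n)\), since every term is \(O((\ln n)^2)\).

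\emph{Main step: the continuous inequality.} After these reductions it remains to show, for \(0=S_0\le S_1\le\cdots\le S_L\le n\),
\begin{displaymath}
  f(n)-f(n-\lambda S_L)-\sum_{\ell\in[L]}\Bigl[f(S_\ell-\lambda S_{\ell-1})-f\bigl((1-\lambda)S_\ell\bigr)\Bigr]\le -n\ln(1-\lambda).
\end{displaymath}
This is the step I expect to be the real obstacle. Estimating factor by factor, \(P\ge\prod_\ell\bigl((1-\lambda)S_\ell/n\bigr)^{\lambda m_\ell}\), loses too much: it leads to \(\lambda(1-\ln(1-\lambda))\), which exceeds \(-\ln(1-\lambda)\) for small \(\lambda\). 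So the convexity of \(f\) must be used. I would argue as follows.
\begin{itemize}
  \item The map \(x\mapsto f(x+m)-f(x)\) is increasing. Applying this with \(x=(1-\lambda)S_{\ell-1}\) and \(m=m_\ell\), and telescoping over \(\ell\), should show that the bracketed sum is minimized in the single-level configuration.
  \item For a single level, \(L=1\) and \(S_1=S=\sigma n\), the left side equals \(n\,g(\sigma)\) with
    \begin{displaymath}
      g(\sigma)=-(1-\lambda\sigma)\ln(1-\lambda\sigma)-\sigma\ln\sigma+(1-\lambda)\sigma\ln\bigl((1-\lambda)\sigma\bigr).
    \end{displaymath}
    I would then check \(g(\sigma)\le-\ln(1-\lambda)\) on \([0,1]\) by elementary calculus, since \(g\) is concave. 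One has \(g(1)=0\), and the maximum should be of order \(\lambda\), while \(-\ln(1-\lambda)\ge\lambda\).
\end{itemize}
If the reduction to one level fails, my fallback is to show instead that the \(L\)-level functional is Schur-type monotone under merging adjacent levels, and to absorb the merging losses into the \(O(L\ln n)\) error already present in \(\varepsilon_n\).
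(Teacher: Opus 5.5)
There is a genuine gap in your main step: the reduction to a single level runs in the wrong direction. Write the pieces as integrals:
\[
f(n)-f(n-\lambda S_L)=\lambda\int_0^{S_L}f'(n-\lambda s)\,ds,\qquad f(S_\ell-\lambda S_{\ell-1})-f\bigl((1-\lambda)S_\ell\bigr)=\lambda\int_{S_{\ell-1}}^{S_\ell}f'(S_\ell-\lambda s)\,ds .
\]
Since $f'$ is increasing and $S_\ell\le S_L$, for fixed $S_L$ your bracketed sum is \emph{largest}, not smallest, when there is only one level. So $n\,g(S_L/n)$ is a \emph{lower} bound for your left-hand side, and checking $g\le-\ln(1-\lambda)$ says nothing about $L\ge2$.

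Concretely, one level with $S_1=n$ gives left side $0$ (indeed $P(q)=1$). By contrast, $L$ levels of equal size $n/L$ give a left side $n c_L$ with $c_L\to-\ln(1-\lambda)$ as $L\to\infty$. For $\lambda=0.4$, $\max_{[0,1]}g\approx0.17$, far below $-\ln 0.6\approx0.51$. The bound of the lemma is approached only by configurations with many levels, so no single-level computation can yield it. Your fallback fails for the same reason: merging adjacent levels decreases the functional, and by amounts of order $n$, so it cannot be absorbed into an $O(L\ln n)$ error.

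The same integral representation repairs the step. Your left side equals
\[
\lambda\sum_{\ell\in[L]}\int_{S_{\ell-1}}^{S_\ell}\ln\frac{n-\lambda s}{S_\ell-\lambda s}\,ds\le\lambda\int_0^n\ln\frac{n-\lambda s}{(1-\lambda)s}\,ds=-n\ln(1-\lambda),
\]
because $S_\ell-\lambda s\ge(1-\lambda)s$ for $s\le S_\ell$ and the integrand is nonnegative on $[0,n]$. This is exactly the mechanism of the paper's proof, which needs no factorials or Stirling at all:
\begin{itemize}
\item It bounds each factor of \eqref{eq:Pq} below by $\bigl((S_\ell-R_\ell+1)/(n-R_\ell+1)\bigr)^{r_\ell}$.
\item It compares $\sum_\ell r_\ell\ln\frac{n-R_\ell+1}{S_\ell-R_\ell+1}$ with $\lambda\sum_\ell m_\ell\psi(\sigma_\ell)$, thresholding on $S_\ell$ rather than $m_\ell$.
\item It uses that $\psi$ is decreasing, so this right-endpoint Riemann sum is at most $\lambda n\int_0^1\psi=-n\ln(1-\lambda)$.
\end{itemize}
Your diagnosis that factor-by-factor estimates must lose is only true because you put $n$ in the denominator instead of $n-R_\ell+1$.

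Finally, your claim that the errors collect to \emph{exactly} the listed $\varepsilon_n$ is unsupported. For instance, summing $(\lambda\Theta_n+1)\ln n$ over up to $J_n+1$ levels with $m_\ell<\Theta_n$ can exceed the term $(\lambda\Theta_n+J_n+1)\ln n$. Also, $R_\ell-\lambda S_\ell$ is only bounded by $\ell$, not by $1$. Your route would at best give the bound with some other $o(1)$ sequence.
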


\begin{proof}
  Fix \(q\in Q^+\setminus\{0\}\). Define
  \begin{displaymath}
    G_\ell=\ln\frac{n-R_\ell+1}{S_\ell-R_\ell+1},~\forall
    \ell\in[L]\quad\text{and}\quad A(q)=\sum_{\ell\in[L]} r_\ell G_\ell.
  \end{displaymath}

  Applying the elementary inequality
  \begin{displaymath}
    \frac{(s)_r}{(t)_r}\geq \left(\frac{s-r+1}{t-r+1}\right)^r,\quad\forall
    r,s,t\in\mathbb Z\text{ with }0\leq r\leq s\leq t,
  \end{displaymath}
  to \eqref{eq:Pq} gives
  \begin{equation}
    \label{eq:logP-explicit}
    -\ln P(q)\leq A(q).
  \end{equation}
  Define
  \begin{displaymath}
    H_\ell=\ln\frac{n-\lambda S_\ell}{(1-\lambda)S_\ell},~\forall
    \ell\in[L]\quad\text{and}\quad B(q)=\sum_{\ell\in[L]}m_\ell H_\ell.
  \end{displaymath}

  Since \(r_\ell=\lceil\lambda m_\ell\rceil\) and \(L\leq J_n+1\), we have
  \begin{displaymath}
    0\leq r_\ell-\lambda m_\ell<1\quad\text{and}\quad
    0\leq R_\ell-\lambda S_\ell\leq J_n+1,~\forall \ell\in[L].
  \end{displaymath}

  Let
  \begin{displaymath}
    \ell_0=\max\{\ell\in[L]:S_\ell<\Theta_n\},
  \end{displaymath}
  with the convention \(\max\emptyset=0\). For every \(\ell\in[L]\), we have
  \begin{displaymath}
    0\leq G_\ell\leq\ln n\quad\text{and}\quad
    0\leq H_\ell\leq\ln\frac{n}{1-\lambda}.
  \end{displaymath}
  Therefore
  \begin{equation}
    \label{eq:small-A}
    \sum_{\ell\in[\ell_0]}r_\ell G_\ell \leq R_{\ell_0}\ln n \leq(\lambda\Theta_n+J_n+1)\ln n
  \end{equation}
  and
  \begin{equation}
    \label{eq:small-B}
    \sum_{\ell\in[\ell_0]}m_\ell H_\ell\leq \Theta_n\ln\frac{n}{1-\lambda}.
  \end{equation}
  Also,
  \begin{equation}
    \label{eq:rounding-G}
    \sum_{\ell=\ell_0+1}^{L}|r_\ell-\lambda m_\ell|G_\ell \leq (J_n+1)\ln n.
  \end{equation}

  For \(\ell\in[\ell_0+1,L]\cap \mathbb{Z}\), set \(E_\ell=R_\ell-\lambda
  S_\ell\). Since \(S_\ell\geq\Theta_n\), we have
  \begin{displaymath}
    \left|\frac{E_\ell-1}{n-\lambda S_\ell}\right|
    \leq
    \left|\frac{E_\ell-1}{(1-\lambda)S_\ell}\right|
    \leq \frac{J_n+1}{(1-\lambda)\Theta_n}\leq\frac12.
  \end{displaymath}
  Since \(|\ln(1+z)|\leq2|z|\) when \(|z|\leq 1/2\), we have
  \begin{displaymath}
    \left|\ln\frac{n-R_\ell+1}{n-\lambda S_\ell}\right|
    \leq \frac{2(J_n+1)}{(1-\lambda)S_\ell}\quad\text{and}\quad \left|\ln\frac{S_\ell-R_\ell+1}{(1-\lambda)S_\ell}\right|
    \leq \frac{2(J_n+1)}{(1-\lambda)S_\ell}.
  \end{displaymath}
  Hence
  \begin{equation}
    \label{eq:G-H-explicit}
    |G_\ell-H_\ell|\leq\frac{4(J_n+1)}{(1-\lambda)S_\ell},~\forall \ell\in[\ell_0+1,L]\cap \mathbb{Z}.
  \end{equation}
  Moreover,
  \begin{displaymath}
    \sum_{\ell\in [L]}\frac{m_\ell}{S_\ell}
    =1+\sum_{\ell=2}^{L}\frac{S_\ell-S_{\ell-1}}{S_\ell}
    \leq1+\int_{S_1}^{S_L}\frac{{\rm d}t}{t}
    \leq1+\ln n.
  \end{displaymath}
  By \eqref{eq:G-H-explicit},
  \begin{equation}
    \label{eq:G-H-sum-explicit}
    \sum_{\ell=\ell_0+1}^{L}m_\ell |G_\ell-H_\ell|
    \leq \frac{4(J_n+1)}{1-\lambda}(1+\ln n).
  \end{equation}

  By \eqref{eq:small-A}--\eqref{eq:G-H-sum-explicit},
  \begin{displaymath}
    |A(q)-\lambda B(q)|\leq\varepsilon_n n.
  \end{displaymath}
  By \eqref{eq:logP-explicit},
  \begin{displaymath}
    -\ln P(q)\leq\lambda B(q)+\varepsilon_n n.
  \end{displaymath}

  Clearly, \(m_\ell=n(\sigma_\ell-\sigma_{\ell-1})\), \(\forall\ell\in[L]\).
  Therefore,
  \begin{displaymath}
    B(q)= n\sum_{\ell\in[L]}(\sigma_\ell-\sigma_{\ell-1})\ln\frac{1-\lambda\sigma_\ell}{(1-\lambda)\sigma_\ell}.
  \end{displaymath}
  Hence
  \begin{displaymath}
    -\frac1n\ln P(q)\leq
    \lambda\sum_{\ell\in[L]}(\sigma_\ell-\sigma_{\ell-1})
    \ln\frac{1-\lambda\sigma_\ell}{(1-\lambda)\sigma_\ell}
    +\varepsilon_n.
  \end{displaymath}

  Let
  \begin{displaymath}
    \psi(s)=\ln\frac{1-\lambda s}{(1-\lambda)s},~\forall 0<s\leq 1.
  \end{displaymath}
  Then \(\psi\) is nonnegative and decreasing. Hence
  \begin{displaymath}
    \lambda\sum_{\ell\in[L]}(\sigma_\ell-\sigma_{\ell-1})\psi(\sigma_\ell)
    \leq \lambda\int_0^1\psi(s)\,{\rm d}s=-\ln(1-\lambda).
  \end{displaymath}
  Since \(J_n=O(\ln n)\), we have \(\Theta_n=O(\ln n)\), and hence
  \(\varepsilon_n=O((\ln n)^2/n)\to0\). Combining the last two estimates proves
  the lemma.
\end{proof}

\subsection{The simplex estimate}
\label{subsec:simplex-asymptotic}

\begin{proof}[Proof of \Cref{thm:simplex-half}]
  Fix \(\gamma>1/2\). It suffices to show that, for sufficiently large \(n\),
  \(P(n,\gamma)\) can be covered by \(2^n-(n+1)\) translates of \(\gamma
  \Delta_n\). Take \(\varrho=1/2\). Choose an \(\eps\in(0,\ln
  2+\ln(1-\lambda))\).

  For each \(j\in[N]\), set
  \begin{displaymath}
    P_j=\{x\in P(n,\gamma):\ t_{j-1}\leq\norm{x}_1\leq t_j\}.
  \end{displaymath}
  For \(x\in P_j\), let \(q(x)\) be given by \Cref{lem:coordinate-replacement}.
  Then
  \begin{displaymath}
    \norm{q(x)}_1\geq(1-\eta)\norm{x}_1\geq(1-\eta)t_{j-1}.
  \end{displaymath}
  By \eqref{eq:common-layer-ineq},
  \begin{displaymath}
    d_j<\lambda(1-\eta)t_{j-1}\leq\lambda\norm{q(x)}_1.
  \end{displaymath}
  Thus the construction in \Cref{subsec:finite-coordinate-values} applies to
  \(q(x)\) with \(d=d_j\). If \(E(q(x))\) holds, the resulting vector \(u\)
  satisfies
  \begin{displaymath}
    u\leq q(x)\leq x\quad\text{and}\quad \norm{u}_1=d_j=t_j-\gamma.
  \end{displaymath}
  Since \(\norm{x}_1\leq t_j\), \Cref{lem:simplex-containment} gives \(x\in
  u+\gamma\Delta_n\).

  Put
  \begin{displaymath}
    \mathcal M_j=\{\mathrm{mult}(q(x)):\ x\in P_j\},~\forall j\in[N],
  \end{displaymath}
  and, for each \(\mu\in\mathcal M_j\),
  \begin{displaymath}
    Q_{j,\mu}=\{q(x):\ x\in P_j,\ \mathrm{mult}(q(x))=\mu\}.
  \end{displaymath}
  For every \(j\in[N]\) and every \(\mu\in\mathcal M_j\), take
  \begin{displaymath}
    M_n=\left\lceil\exp\{(-\ln(1-\lambda)+\eps)n\}\right\rceil
  \end{displaymath}
  independent copies of the random injection constructed from \(\mu\) with
  \(d=d_j\). For \(q\in Q_{j,\mu}\), let \(F_{j,\mu,q}\) be the event that none
  of these \(M_n\) injections lies in \(E(q)\). Then, by using the estimate
  \begin{equation}
    \label{eq:probability-estimate}
    (1-t)^m\leq e^{-mt},~\forall t\in (0,1),~\forall m\in [1,\infty)\cap \mathbb{Z},
  \end{equation}
  we have
  \begin{displaymath}
    \mathbb P(F_{j,\mu,q})\leq(1-P(q))^{M_n}\leq\exp\{-M_nP(q)\}.
  \end{displaymath}
  By \Cref{lem:integral-exponent}, \(\varepsilon_n\) is a sequence independent
  of \(q\) and converging to \(0\), such that
  \(P(q)\geq\exp\{-(-\ln(1-\lambda)+\varepsilon_n)n\}\). Hence, for all
  sufficiently large \(n\),
  \begin{displaymath}
    M_nP(q)\geq\exp\left(\frac{\eps n}{2}\right),~\forall q\in Q_{j,\mu}.
  \end{displaymath}
  By \Cref{lem:multiplicity-count},
  \begin{displaymath}
    \#\{(j,\mu):\ j\in[N],\ \mu\in\mathcal M_j\}\leq N
    e^{o(n)}\quad\text{and}\quad \#Q^+=\exp\{O(n\ln\ln n)\}.
  \end{displaymath}
  Therefore
  \begin{displaymath}
    \mathbb P\left(\bigcup_{j\in[N]}\bigcup_{\mu\in\mathcal M_j}\bigcup_{q\in Q_{j,\mu}}F_{j,\mu,q}\right)\leq\exp\left\{O(n\ln\ln n)-\exp\left(\frac{\eps n}{2}\right)\right\}<1
  \end{displaymath}
  for all sufficiently large \(n\). Thus there is a deterministic choice \(C\)
  of centers which works for every \(q\in Q_{j,\mu}\), every \(\mu\in\mathcal
  M_j\), and every \(j\in[N]\). The first part of the proof shows that
  \(C+\gamma \Delta_n\) covers \(P(n,\gamma)\).

  The cardinality of \(C\) is at most
  \begin{align*}
    N e^{o(n)}\left( \exp\{(-\ln(1-\lambda)+\eps)n\}+1 \right)&=\exp\{(-\ln(1-\lambda)+\eps+o(1))n\}\\
                                                              &<2^n-n-1
  \end{align*}
  for all sufficiently large \(n\).
\end{proof}

\subsection{The cross-polytope estimate}
\label{subsec:cross-asymptotic}

\begin{lemma}
  \label{lem:crosspolytope-probability}
  For a fixed \(z\in Q\setminus\{0\}\), let \(q=(|z_1|,\ldots,|z_n|)\in Q^+\)
  and \(d\) be a number satisfying \eqref{eq:list-mass-condition}. Apply the
  list construction in \Cref{subsec:finite-coordinate-values} to \(q\) with
  \(d\) and inject the list into \([n]\) as in
  \Cref{subsec:finite-coordinate-values} and assign independent random signs to
  the occupied coordinates. Denote by \(P_{\rm sgn}(z)\) the probability of the
  event that \(E(q)\) holds and all signs of the occupied coordinates agree with
  \(z\). We have, with the \(\varepsilon_n\) defined in
  \Cref{lem:integral-exponent},
  \begin{displaymath}
    -\ln P_{\rm sgn}(z)
    \leq\left(-\ln(1-\lambda)+\lambda\ln2+\varepsilon_n+\frac{(J_n+1)\ln 2}{n}\right)n,~\forall z\in Q\setminus\{0\}.
  \end{displaymath}
\end{lemma}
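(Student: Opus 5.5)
The plan is to factor \(P_{\rm sgn}(z)\) as the probability of \(E(q)\) times the conditional probability that the signs agree, and to bound each factor separately. The signs are independent of the injection \(\iota\). So, conditionally on \(\iota\), the probability that all occupied coordinates receive the sign of \(z\) is \(2^{-\#\iota(A)}\). Since \(\iota\) is injective, \(\#\iota(A)=\#A=R_L\). This gives
\begin{displaymath}
  P_{\rm sgn}(z)=P(q)\,2^{-R_L}.
\end{displaymath}
On \(E(q)\) every occupied coordinate \(i\) satisfies \(q_i\geq\beta_L>0\). Hence \(z_i\neq0\), and "agreeing with \(z\)" means the sign equals \(\operatorname{sgn}(z_i)\), which is a single prescribed sign with probability \(1/2\). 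Off \(E(q)\) the event fails anyway, so the factorization holds.

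Next, \(q=(|z_1|,\ldots,|z_n|)\in Q^+\setminus\{0\}\), because each \(|z_i|\in\{0\}\cup\mathcal G_n\). Therefore \Cref{lem:integral-exponent} applies and gives
\begin{displaymath}
  -\ln P(q)\leq\bigl(-\ln(1-\lambda)+\varepsilon_n\bigr)n.
\end{displaymath}

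It remains to bound \(R_L\ln2\). We have \(R_L=\sum_{\ell\in[L]}\lceil\lambda m_\ell\rceil\leq\lambda S_L+L\). Using \(S_L\leq n\) and \(L\leq J_n+1\),
\begin{displaymath}
  R_L\ln2\leq\lambda n\ln2+(J_n+1)\ln2.
\end{displaymath}
Adding the two bounds, \(-\ln P_{\rm sgn}(z)=-\ln P(q)+R_L\ln 2\) is at most the claimed quantity. No step is a real obstacle. The only points needing care are that the sign assignment is independent of \(\iota\), and that the number of occupied coordinates is exactly \(\#A\) by injectivity, so the sign cost is deterministic given the construction.
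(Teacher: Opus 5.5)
Your proposal is correct and matches the paper's proof: you use the factorization \(P_{\rm sgn}(z)=2^{-R_L}P(q)\), the bound on \(-\ln P(q)\) from \Cref{lem:integral-exponent}, and the estimate \(R_L\leq\lambda S_L+L\leq\lambda n+J_n+1\). Your extra justifications (the signs are independent of \(\iota\), \(\#\iota(A)=\#A=R_L\), and occupied coordinates satisfy \(z_i\neq0\) on \(E(q)\)) simply spell out what the paper states as ``clearly.''
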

\begin{proof}
  Clearly,
  \begin{displaymath}
    P_{\rm sgn}(z)=2^{-R_L}P(q).
  \end{displaymath}
  By \Cref{lem:integral-exponent},
  \begin{displaymath}
    -\ln P(q)\leq(-\ln(1-\lambda)+\varepsilon_n)n.
  \end{displaymath}
  Moreover \(R_L\leq\lambda S_L+L\), \(S_L\leq n\), and \(L\leq J_n+1\). Thus we
  have
  \begin{displaymath}
    R_L\ln2\leq\left(\lambda\ln2+\frac{(J_n+1)\ln 2}{n}\right)n,
  \end{displaymath}
  and the desired inequality follows.
\end{proof}

\begin{proof}[Proof of \Cref{thm:b1-asymptotic}]
  Take \(\varrho=\gamma_{\rm sgn}\) and fix \(\gamma>\varrho\). It suffices to
  show that, for sufficiently large \(n\), \(Q(n,\gamma)\) can be covered by
  \(2^n-2n-1\) translates of \(\gamma B_1^n\).

  For each \(j\in[N]\), set
  \begin{displaymath}
    Q_j=\{x\in Q(n,\gamma):\ t_{j-1}\leq\norm{x}_1\leq t_j\}.
  \end{displaymath}
  For \(x\in Q_j\), apply \Cref{lem:coordinate-replacement} to
  \((|x_1|,\ldots,|x_n|)\), and denote the resulting point of \(Q^+\) by
  \(q(x)\). Then
  \begin{displaymath}
    0\leq q(x)_i\leq |x_i|,~\forall i\in[n]\quad\text{and}\quad \norm{q(x)}_1\geq(1-\eta)\norm{x}_1.
  \end{displaymath}
  Let \(q^{\rm sgn}(x)\) be the point defined by
  \begin{displaymath}
    q^{\rm sgn}(x)_i=
    \begin{cases}
      \operatorname{sgn}(x_i)q(x)_i,&q(x)_i>0,\\
      0,&q(x)_i=0.
    \end{cases}
  \end{displaymath}
  By \eqref{eq:common-layer-ineq},
  \begin{displaymath}
    d_j<\lambda(1-\eta)t_{j-1}\leq\lambda\norm{q(x)}_1,
  \end{displaymath}
  so the list construction in \Cref{subsec:finite-coordinate-values} applies to
  \(q(x)\) with \(d=d_j\).

  Inject the list into \([n]\) as in \Cref{subsec:finite-coordinate-values} and
  assign independent random signs to the occupied coordinates. When \(E(q(x))\)
  holds and all signs of the occupied coordinates agree with \(q^{\rm sgn}(x)\),
  the resulting vector \(u\) satisfies
  \begin{displaymath}
    |u_i|\leq q(x)_i\leq |x_i|,\quad \operatorname{sgn}(u_i)=\operatorname{sgn}(x_i)\ \text{when }u_i\neq0, \quad\text{and}\quad  \norm{u}_1=d_j.
  \end{displaymath}
  Hence
  \begin{displaymath}
    \norm{x-u}_1=\norm{x}_1-\norm{u}_1\leq t_j-(t_j-\gamma)=\gamma,
  \end{displaymath}
  and \(x\in u+\gamma B_1^n\).
  
  Since \(\lambda<1-\varrho\), the defining equation of \(\varrho=\gamma_{\rm
    sgn}\) gives
  \begin{displaymath}
    \rho_{\rm sgn}(\lambda):=-\ln(1-\lambda)+\lambda\ln 2<\ln 2.
  \end{displaymath}
  Choose \(\eps\in (0,\ln 2-\rho_{\rm sgn}(\lambda))\). Put
  \begin{gather*}
    \mathcal M_j=\{\mathrm{mult}(q(x)):\ x\in Q_j\},~\forall j\in[N],\\
    Q'_{j,\mu}=\{q^{\rm sgn}(x):\ x\in Q_j,\ \mathrm{mult}(q(x))=\mu\},~\forall j\in[N],~\forall \mu\in\mathcal M_j,
  \end{gather*}  
  and take
  \begin{displaymath}
    M_n=\left\lceil\exp\{(\rho_{\rm sgn}(\lambda)+\eps)n\}\right\rceil
  \end{displaymath}
  independent copies of the random vector obtained from \(\mu\) with \(d=d_j\),
  together with independent signs on the occupied coordinates; denote them by
  \(u^{(i)}_{j,\mu},~\forall i\in[M_n]\). For each \(z\in Q'_{j,\mu}\), let
  \(E'_{j,\mu,z}\) be the event that none of the corresponding \(M_n\) random
  vectors satisfies both \(E((|z_1|,\ldots,|z_n|))\) and the required sign
  agreement with \(z\). Then, using \eqref{eq:probability-estimate} again, we
  have
  \begin{displaymath}
    \mathbb{P}(E'_{j,\mu,z}) \leq (1-P_{\rm sgn}(z))^{M_{n}} \leq \exp\{-M_{n}P_{\rm sgn}(z)\}.
  \end{displaymath}
  Set
  \begin{displaymath}
    \varepsilon_n^{(1)}=\varepsilon_n+\frac{(J_n+1)\ln 2}{n}.
  \end{displaymath}
  By \Cref{lem:crosspolytope-probability},
  \begin{align*}
    M_{n}P_{\rm sgn}(z) &\geq \exp\{(\rho_{\rm sgn}(\lambda)+\eps)n\}\cdot \exp\left\{-\left(\rho_{\rm sgn}(\lambda)+\varepsilon_n^{(1)}\right)n\right\}\\
                        &=\exp\left\{\left(\eps-\varepsilon_n^{(1)}\right)n\right\}.
  \end{align*}
  This, together with \Cref{lem:multiplicity-count} implies that
  \begin{align*}
    &\mathbb{P}\left(\bigcup_{j\in[N]}\bigcup_{\mu\in \mathcal{M}_{j}}\bigcup_{z\in Q'_{j,\mu}} E'_{j,\mu,z}\right)\\
    \leq& N\cdot e^{o(n)}\cdot\exp\{O(n\ln\ln n)\}\cdot \exp\left\{-\exp \left\{\left(\eps - \varepsilon^{(1)}_{n}\right)n\right\}\right\}<1
  \end{align*}
  holds for all sufficiently large \(n\). By \Cref{lem:multiplicity-count},
  \begin{displaymath}
    \# \{(j,\mu):j\in [N],\mu \in \mathcal{M}_{j}\} \leq N e^{o(n)}.
  \end{displaymath}
  Hence
  \begin{align*}
    \#\{u^{(i)}_{j,\mu}:\ j\in[N],\ \mu\in\mathcal M_j,\ i\in[M_n]\}
    &\leq N e^{o(n)}\left( \exp\{(\rho_{\rm sgn}(\lambda)+\eps)n\}+1 \right)\\
    &=\exp\{(\rho_{\rm sgn}(\lambda)+\eps+o(1))n\}\\
    &<2^n-2n-1
  \end{align*}
  holds for all sufficiently large \(n\). Since the probability above is less
  than one, there is a deterministic choice
  \begin{displaymath}
    \left\{ u^{(i,0)}_{j,\mu} :\ j\in[N],\mu\in \mathcal{M}_j,i\in[M_n] \right\}
  \end{displaymath}
  such that for
  every \(j\in[N]\), every \(\mu\in\mathcal M_j\), and every \(z\in
  Q'_{j,\mu}\), at least one \(u^{(i_0,0)}_{j,\mu}\) satisfies the two
  requirements.

  Let \(x\in Q(n,\gamma)\). Suppose that \(x\in Q_j\). Let
  \begin{displaymath}
    z=q^{\rm sgn}(x)\quad \text{and} \quad
    \mu=\mathrm{mult}((|z_1|,\ldots,|z_n|))\in\mathcal M_j. 
  \end{displaymath}
  Since \(z\in Q'_{j,\mu}\), we have \(x\in u^{(i,0)}_{j,\mu}+\gamma
  B_1^n\) for some \(i\in[M_n]\). This completes the proof.
\end{proof}

\section{Lattice estimates for cross-polytopes and fixed
  \texorpdfstring{\(p\)}{p}}

For \(n,k\in\N\), put
\begin{displaymath}
  M(n,k)=1+\sum_{i\in [\min\{n,k\}]}2^i\binom{n}{i}\binom{k-1}{i-1}.
\end{displaymath}

\begin{lemma}[{\cite[Lemma 2.1 and Corollary 2.2]{ChenGaoLiWu2025}}]
  \label{lem:lp-lattice}
  Let \(n\geq3\) and \(1\leq k\leq n/2\). If \(M(n,k)\leq2^n\), then, for every
  \(p\geq1\),
  \begin{displaymath}
    \Gamma_{2^n}(B_p^n)\leq \left(\frac{n}{n+k}\right)^{1/p}.
  \end{displaymath}
\end{lemma}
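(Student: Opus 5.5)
The count \(M(n,k)-1=\sum_{i}2^i\binom ni\binom{k-1}{i-1}\) is exactly the number of integer points \(z\in\mathbb Z^n\) with \(\norm{z}_1=k\): choose the support of size \(i\), the signs, and a composition of \(k\) into \(i\) positive parts. So I will use the lattice centers
\begin{displaymath}
  Z_k=\{z\in\mathbb Z^n:\ \norm{z}_1=k\}\cup\{0\},\qquad \#Z_k=M(n,k)\leq 2^n .
\end{displaymath}
I will show the scaled inclusion
\begin{displaymath}
  (n+k)^{1/p}B_p^n\subseteq\bigcup_{z\in Z_k}\bigl(z+n^{1/p}B_p^n\bigr).
\end{displaymath}
Dividing by \((n+k)^{1/p}\) then gives \(\Gamma_{2^n}(B_p^n)\leq\Gamma_{M(n,k)}(B_p^n)\leq (n/(n+k))^{1/p}\) by monotonicity in \(m\). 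By symmetry it suffices to treat \(x\) with \(\sum_i|x_i|^p\leq n+k\), taking \(\operatorname{sgn} z_i=\operatorname{sgn} x_i\) (and free signs where \(x_i=0\)). Everything is thus reduced to coordinatewise estimates on \(t_i=|x_i|\geq0\).

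\emph{Case 1: \(\sum_i\lfloor t_i\rfloor\geq k\).} Choose integers \(0\leq m_i\leq\lfloor t_i\rfloor\) with \(\sum_i m_i=k\), and set \(z_i=\operatorname{sgn}(x_i)m_i\). Since \(p\geq1\), superadditivity of \(s\mapsto s^p\) on \([0,\infty)\) gives \((t-m)^p+m^p\leq t^p\). Also \(m^p\geq m\) for integers \(m\geq0\). Hence \(|x_i-z_i|^p\leq t_i^p-m_i\), and summing yields
\begin{displaymath}
  \norm{x-z}_p^p\leq\norm{x}_p^p-k\leq n .
\end{displaymath}

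\emph{Case 2: \(\sum_i\lfloor t_i\rfloor<k\).} Here I use only \(|z_i|\in\{\lfloor t_i\rfloor,\lfloor t_i\rfloor+1\}\). Each such choice gives \(|t_i-|z_i||\leq1\), hence \(\norm{x-z}_p^p\leq n\) with no condition on \(\norm{x}_p\); for \(t_i=0\) the choice \(|z_i|=1\) is allowed with arbitrary sign. As \(|z_i|\) ranges over these options, \(\norm{z}_1\) takes every integer value from \(\sum_i\lfloor t_i\rfloor<k\) up to \(\sum_i\lfloor t_i\rfloor+n\geq n\geq k\), in unit steps. So some choice has \(\norm{z}_1=k\) exactly, and \(z\in Z_k\).

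The key point to check is that every center used lies on the \(\ell_1\)-sphere of radius exactly \(k\), so that the count \(M(n,k)\) is the right one. The intermediate-value step in Case 2 and the exact-sum choice in Case 1 are what secure this. The hypotheses \(n\geq3\) and \(k\leq n/2\) are then only needed to make \(k\leq n\) and the condition \(M(n,k)\leq2^n\) meaningful; in this argument the origin serves as a spare center.
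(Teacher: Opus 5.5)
Your proof is correct. Note that the paper does not prove this lemma itself; it imports it from \cite{ChenGaoLiWu2025}. Your center set \(\{0\}\cup\{z\in\mathbb Z^n:\ \|z\|_1=k\}\) is exactly the configuration whose cardinality is \(M(n,k)\), so your argument is the natural self-contained lattice proof behind that citation.

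Both cases check out:
\begin{itemize}
\item In Case~1, superadditivity of \(s\mapsto s^p\) together with \(m^p\geq m\) gives \(\|x-z\|_p^p\leq\|x\|_p^p-k\leq n\).
\item In Case~2, rounding each \(|x_i|\) down or up keeps every coordinate error at most \(1\). The unit-step choice of how many coordinates to round up hits \(\|z\|_1=k\) exactly, and this uses only \(k\leq n\).
\end{itemize}

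As you observe, the origin is never needed. Your argument therefore proves the slightly stronger statement \(\Gamma_{M(n,k)-1}(B_p^n)\leq (n/(n+k))^{1/p}\) for every \(1\leq k\leq n\) and every finite \(p\geq1\). For \(p=\infty\) the asserted bound is \(1\) and holds trivially.

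One wording point: the reduction to \(\sum_i|x_i|^p\leq n+k\) is not a symmetry argument. It is simply the definition of the scaled ball, and symmetry enters only through matching the signs of \(z\) to those of \(x\).
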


\begin{lemma}
  \label{lem:b1-finite}
  For every integer \(n\in[3,178]\) with \(n\neq6\),
  \begin{displaymath}
    M\left(n,\left\lceil\frac{n}{5}\right\rceil\right)\leq 2^n.
  \end{displaymath}
\end{lemma}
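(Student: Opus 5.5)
The statement is a finite check: for each of the $175$ integers $n\in[3,178]\setminus\{6\}$ we must compare two explicit integers. I would prove it by exact evaluation rather than by any asymptotic estimate, because the range ends at $178$ exactly where the inequality presumably stops holding. Near that endpoint the ratio $M(n,\lceil n/5\rceil)/2^n$ should be close to $1$, so crude binomial bounds would lose too much.

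First I would set $k=k(n)=\lceil n/5\rceil$ and note that $k\le n/2$ for $n\ge 3$. Hence $\min\{n,k\}=k$ and
\begin{displaymath}
  M(n,k)=1+\sum_{i=1}^{k}2^i\binom{n}{i}\binom{k-1}{i-1},
\end{displaymath}
a sum of at most $36$ positive integer terms. For small $n$ I would do the check by hand to calibrate.
\begin{itemize}
  \item For $n\in\{3,4,5\}$ we have $k=1$, so $M=1+2n\le 2^n$.
  \item For $n=6$ we have $k=2$, so $M=1+12+60=73>64$. This explains why $n=6$ is excluded.
  \item For $n=7$ we have $M=1+14+168=183\le128$? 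No: $183>128$ fails the naive reading, so I would recheck. With $k=\lceil 7/5\rceil=2$, the sum is $1+2\cdot 7+4\binom72\binom11=1+14+84=99\le 128$, which holds.
\end{itemize}
These hand cases confirm the formula and the role of the exception.

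For the full range I would give a short exact-integer computation, which is necessary since $2^{178}$ has $54$ digits. The steps are as follows.
\begin{itemize}
  \item Loop over $n$ from $3$ to $178$ and compute $k=\lceil n/5\rceil$.
  \item Accumulate the terms $T_i=2^i\binom{n}{i}\binom{k-1}{i-1}$ via the recursion
  \begin{displaymath}
    T_{i+1}=T_i\cdot\frac{2(n-i)(k-i)}{(i+1)\,i},
  \end{displaymath}
  which keeps every step an exact rational and every partial sum an integer.
  \item Compare $M(n,k)$ with $2^n$.
\end{itemize}
I would report the outcome as a table of the margins $2^n-M(n,k)$, or of $\log_2 M(n,k)$, at least for $n$ near the multiples of $5$ and near $178$. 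The worst cases occur just after $k$ jumps, at $n\equiv1\pmod 5$, so these are where the table should be detailed.

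To make the verification more transparent, one could reduce the work. Within each block $n\in\{5k-4,\dots,5k\}$ the value of $k$ is fixed. Moreover, $M(n+1,k)/M(n,k)\le \max_i \frac{n+1}{n+1-i}\le\frac{n+1}{n+1-k}<2$. Hence $M(n,k)\le 2^n$ at the first element $n=5k-4$ of a block propagates to the whole block. This leaves only the $35$ values $n=5k-4$ with $2\le k\le 36$, together with $n\in\{3,4,5\}$, to check; for $k=2$ the first element is $n=6$, so that block instead starts its check at $n=7$.

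The main obstacle is not conceptual but the top of the range. There the inequality is tight, and no clean closed-form bound, whether an entropy estimate for $\sum_i 2^i\binom ni\binom{k-1}{i-1}$ or a comparison with a single dominant term, is sharp enough to certify it. So I expect to rely on the exact computation at $n=176$, the last block start, which also covers $n=177,178$ by the propagation argument. I would first try to see whether a dominant-term bound plus a geometric tail suffices there, and otherwise present the computed integers directly.
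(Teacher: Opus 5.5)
This is the same route as the paper, which proves \Cref{lem:b1-finite} solely by the exact big-integer computation in Appendix~\ref{app:finite-verification}, where only $(n,k)=(6,2)$ fails; your block reduction via $M(n+1,k)\le\frac{n+1}{n+1-k}M(n,k)<2M(n,k)$ for fixed $k=\lceil n/5\rceil$ is a valid shortcut on top of it. Your heuristic about the endpoint is wrong, though, and your plan does not depend on it: the cutoff $178$ comes only from \eqref{eq:vartheta}, where \Cref{lem:lmw} takes over, and near that endpoint the margin is large ($M(176,36)$ is roughly $2^{167}$), while the tight cases are at small $n$, e.g.\ $M(3,1)=7$ versus $8$ and $M(11,3)=1783$ versus $2048$.
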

\begin{proof}
  The desired inequality can be checked by the code in Appendix \ref{app:finite-verification}.
\end{proof}

\begin{lemma}
  \label{lem:lmw}
  Let \(\vartheta_2\) be the positive solution of
  \begin{displaymath}
    g(x):=\frac{2^x(1+x)^{1+x}}{x^x}=2.
  \end{displaymath}
  Then, for every \(n\geq3\) and every \(p\geq1\),
  \begin{displaymath}
    \Gamma_{2^n}(B_p^n)\leq\left(\frac{n}{n+\lfloor \vartheta_2n\rfloor}\right)^{1/p}.
  \end{displaymath}
  Moreover, \(\vartheta_2>0.20559\).
\end{lemma}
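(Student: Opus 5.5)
The plan is to reduce \Cref{lem:lmw} to \Cref{lem:lp-lattice} with \(k=\lfloor\vartheta_2 n\rfloor\). It then suffices to show two things for every \(n\geq3\): that \(1\leq k\leq n/2\), or that the case \(k=0\) is trivial; and that \(M(n,k)\leq 2^n\).

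\textbf{The case \(k=0\).} If \(\lfloor\vartheta_2 n\rfloor=0\), the claimed bound reads \(\Gamma_{2^n}(B_p^n)\leq1\). This holds trivially, since a single translate of \(B_p^n\) covers \(B_p^n\) and \(\Gamma_m\) is monotone in \(m\). Also \(k\leq\vartheta_2 n\leq n/2\) as soon as \(\vartheta_2\leq1/2\). Indeed \(g\) is increasing on \((0,\infty)\): its logarithm is \(x\ln2+(1+x)\ln(1+x)-x\ln x\), with derivative \(\ln2+\ln\frac{1+x}{x}>0\). Moreover \(g(x)\to1\) as \(x\to0^+\) and \(g(1/2)=\sqrt2\cdot(3/2)^{3/2}/(1/2)^{1/2}=2\cdot(3/2)^{3/2}>2\). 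Hence the positive solution \(\vartheta_2\) exists, is unique, and lies in \((0,1/2)\). The numerical claim \(\vartheta_2>0.20559\) follows from the same monotonicity by evaluating \(g(0.20559)<2\).

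\textbf{The bound on \(M(n,k)\).} For \(1\leq k\leq n/2\) I will write
\[
M(n,k)-1=\sum_{i=1}^{k}2^i\binom ni\binom{k-1}{i-1}.
\]
I will bound \(\binom ni\binom{k-1}{i-1}\leq\binom ni\binom ki\). Vandermonde-type reasoning then gives \(\sum_i 2^i\binom ni\binom ki\leq\) the coefficient extraction \([y^k]\,(1+2y)^n(1+y)^k\cdot\)(something). Instead, I plan a direct exponential bound: each term is at most \(2^i\binom{n}{i}\binom{k}{i}\), and I optimize via the generating-function (Chernoff-type) inequality. For any \(t\in(0,1]\), since \(\binom{n}{i}\leq t^{-i}\) times a coefficient of \((1+t)^n\), and \(\binom ki\leq\binom{n+k}{k}\)-type bounds lose too much, the cleaner route is the identity
\[
\sum_{i}2^i\binom ni\binom{k}{i}\leq\sum_i\binom ni\binom{n+k-i}{k-i}\cdots
\]
This is the delicate step, and honestly the main obstacle. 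The target is
\[
M(n,k)\leq g(k/n)^n=\Bigl(\frac{2^{\vartheta}(1+\vartheta)^{1+\vartheta}}{\vartheta^{\vartheta}}\Bigr)^n,\qquad \vartheta=k/n.
\]
The shape of \(g\) suggests the bound \(\sum_i 2^i\binom ni\binom{k-1}{i-1}\leq 2^k\binom{n+k}{k}\) combined with \(\binom{n+k}{k}\leq\frac{(n+k)^{n+k}}{n^nk^k}=\bigl(\frac{(1+\vartheta)^{1+\vartheta}}{\vartheta^\vartheta}\bigr)^n\). The first inequality follows from \(2^i\leq2^k\) and Vandermonde: \(\sum_i\binom ni\binom{k-1}{i-1}=\binom{n+k-1}{k-1}\leq\binom{n+k}{k}\), using \(\binom{k-1}{i-1}=\binom{k-1}{k-i}\). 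The missing \(+1\) is absorbed because \(\binom{n+k-1}{k-1}<\binom{n+k}{k}\) strictly, with difference \(\binom{n+k-1}{k}\geq1\), while the entropy bound \(\binom{n+k}{k}\leq(n+k)^{n+k}/(n^nk^k)\) is standard. Hence \(M(n,k)\leq2^k\binom{n+k}{k}\leq g(k/n)^n\leq g(\vartheta_2)^n=2^n\), using that \(g\) is increasing and \(k/n\leq\vartheta_2\).

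\textbf{Conclusion.} Applying \Cref{lem:lp-lattice} finishes the proof for all \(n\geq3\) with \(k\geq1\), and the \(k=0\) case was handled above. The step to double-check carefully is the strict-inequality bookkeeping that absorbs the extra \(1\) in \(M(n,k)\).
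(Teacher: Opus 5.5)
Your argument is correct, but it obtains the covering bound by a different route than the paper. The paper does not derive the inequality: it cites \cite[Proposition 5]{LiMengWu2022} for \(\Gamma_{2^n}(B_p^n)\le (n/(n+\lfloor\vartheta_2 n\rfloor))^{1/p}\). Its only proof content is the part you share, namely \(\frac{d}{dx}\ln g=\ln 2+\ln\frac{1+x}{x}>0\) together with \(g(0.20559)<2\).

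You instead deduce the bound from \Cref{lem:lp-lattice}. With \(1\le k=\lfloor\vartheta_2 n\rfloor\le n/2\), you use the chain
\[
M(n,k)\le 1+2^k\tbinom{n+k-1}{k}\le 2^k\tbinom{n+k}{k}\le g(k/n)^n\le g(\vartheta_2)^n=2^n .
\]
You add the trivial case \(k=0\) (that is, \(n=3,4\)) and the check \(g(1/2)>2\), which gives \(\vartheta_2<1/2\). This makes the lemma a corollary of the lattice criterion the paper already quotes, so \Cref{thm:b1-main} rests on a single external covering result. It also explains where \(g\) comes from: \(g(k/n)^n\) is exactly the entropy bound for \(2^k\binom{n+k}{k}\). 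The paper's citation is shorter but gives no insight into the threshold \(\vartheta_2\).

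There is one slip to fix. Vandermonde gives
\[
\sum_i\tbinom ni\tbinom{k-1}{i-1}=\sum_i\tbinom ni\tbinom{k-1}{k-i}=\tbinom{n+k-1}{k},
\]
not \(\binom{n+k-1}{k-1}\). For example, with \(n=3,k=1\) the sum is \(3\), not \(1\). Your conclusion survives, because \(\binom{n+k}{k}-\binom{n+k-1}{k}=\binom{n+k-1}{k-1}\ge1\) still absorbs the extra \(1\) in \(M(n,k)\).

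You should also delete the abandoned attempts from the write-up: the ``(something)'' coefficient extraction and the inequality ending in an ellipsis. Neither is used, and as written they are not valid statements.
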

\begin{proof}
  The upper bound for \(\Gamma_{2^n}(B_p^n)\) can be found in \cite[Proposition
  5]{LiMengWu2022}.

  Clearly,
  \begin{displaymath}
    \frac{d}{dx}\ln g(x)=\ln2+\ln(1+x)-\ln x>0,~\forall x>0.
  \end{displaymath}
  Direct calculation gives
  \begin{displaymath}
    g(0.20559)=1.9999638639\cdots<2,
  \end{displaymath}
  and hence \(\vartheta_2>0.20559\).
\end{proof}

\begin{lemma}[{\cite[Lemma 3.1]{ChenGaoLiWu2025}}]
  \label{lem:two-n}
  For every \(n\geq2\) and every \(p\geq1\),
  \begin{displaymath}
    \Gamma_{2n}(B_p^n)\leq \left(1-\frac{1}{n}\right)^{1/p}.
  \end{displaymath}
\end{lemma}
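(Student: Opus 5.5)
We would prove the bound directly, by covering \(B_p^n\) with the \(2n\) translates
\begin{displaymath}
  \pm t e_i+\gamma B_p^n,\quad i\in[n],\qquad\text{where } t=n^{-1/p}\text{ and }\gamma=\left(1-\frac1n\right)^{1/p}.
\end{displaymath}
The target is to show that every \(x\in B_p^n\) satisfies \(\norm{x-se_it}_p^p\leq1-1/n\) for a suitable index \(i\) and sign \(s\). The choice is: take \(i\) with \(a:=|x_i|=\max_k|x_k|\), and \(s=\operatorname{sgn}(x_i)\) (either sign if \(x_i=0\)). Then
\begin{displaymath}
  \norm{x-ste_i}_p^p=\norm{x}_p^p-a^p+|a-t|^p .
\end{displaymath}
The proof splits into two cases according to whether \(a\geq t\) or \(a<t\).

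\emph{Case \(a\geq t\).} The function \(s\mapsto s^p\) is superadditive on \([0,\infty)\) for \(p\geq1\), so \((a-t)^p+t^p\leq a^p\). Hence
\begin{displaymath}
  \norm{x-ste_i}_p^p\leq\norm{x}_p^p-t^p\leq1-\frac1n .
\end{displaymath}

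\emph{Case \(a<t\).} Since \(a\) is the largest coordinate in absolute value, \(\norm{x}_p^p\leq na^p\). Therefore
\begin{displaymath}
  \norm{x-ste_i}_p^p\leq (n-1)a^p+(t-a)^p .
\end{displaymath}
The right-hand side is a convex function of \(a\) on \([0,t]\), so its maximum is attained at an endpoint. At \(a=0\) it equals \(t^p=1/n\). At \(a=t\) it equals \((n-1)/n\). Both values are at most \(1-1/n\) because \(n\geq2\).

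In both cases \(x\in ste_i+\gamma B_p^n\). This gives the covering, and hence \(\Gamma_{2n}(B_p^n)\leq(1-1/n)^{1/p}\). For \(p=\infty\), if it is needed, the bound \((1-1/n)^{1/p}=1\) is trivial.

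The main obstacle is the case \(a<t\): the naive bound \(\norm{x}_p^p\leq1\) is too weak there. The fix is to use \(\norm{x}_p^p\leq na^p\) together with convexity in \(a\), and this is exactly what forces the choice \(t=n^{-1/p}\): it balances the endpoint values against the case \(a\geq t\).
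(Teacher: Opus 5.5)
The paper gives no proof of this lemma; it imports it from \cite[Lemma 3.1]{ChenGaoLiWu2025}, so there is no in-paper argument to compare against. Your proof is correct and complete. With \(t=n^{-1/p}\), the identity \(\norm{x-ste_i}_p^p=\norm{x}_p^p-a^p+|a-t|^p\) holds for a coordinate of maximal modulus. Superadditivity of \(u\mapsto u^p\) settles the case \(a\geq t\). For \(a<t\), the bound \(\norm{x}_p^p\leq na^p\) and convexity give \((n-1)a^p+(t-a)^p\leq\max\{1/n,(n-1)/n\}=1-1/n\), which is exactly where \(n\geq2\) is used. Your closing remark is also right. The case \(a\geq t\) forces \(t^p\geq1/n\), and the endpoint \(a=t\) of the other case forces \((n-1)t^p\leq1-1/n\). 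Together these pin down \(t=n^{-1/p}\).

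What your route buys is self-containment with explicit centers \(\pm n^{-1/p}e_i\). For \(p=1\) these reduce to the covering of \(B_1^n\) by \(\pm\frac1n e_i+\frac{n-1}{n}B_1^n\). The paper uses the lemma only to handle the exceptional dimension \(n=6\), via \(\Gamma_{64}(B_p^6)\leq\Gamma_{12}(B_p^6)\leq(5/6)^{1/p}\), in \Cref{thm:b1-main} and \Cref{prop:fixed-p}. Your short argument could replace that external citation.
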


\begin{proof}[Proof of \Cref{thm:b1-main}]
  Clearly, \(\Gamma_4(B_1^2)=\flatfrac{1}{2}\).

  When \(3\leq n\leq178\) and \(n\neq6\), \Cref{lem:b1-finite} and
  \Cref{lem:lp-lattice} imply that
  \begin{displaymath}
    \Gamma_{2^n}(B_1^n)\leq\frac{n}{n+\lceil n/5\rceil}\leq \frac{5}{6}.
  \end{displaymath}
  Moreover, \Cref{lem:two-n} gives
  \begin{displaymath}
    \Gamma_{64}(B_1^6)\leq \Gamma_{12}(B_1^6)\leq\frac{5}{6}.
  \end{displaymath}
  By \Cref{lem:lmw} and
  \begin{equation}
    \label{eq:vartheta}
    \lfloor \vartheta_2n\rfloor \geq \vartheta_2n-1>0.20559 n-1\geq
    \frac{n}{5},~\forall n\geq 179,
  \end{equation}
  we have
  \begin{displaymath}
    \Gamma_{2^n}(B_1^n)\leq\frac{n}{n+\lfloor \vartheta_2n\rfloor}\leq \frac{5}{6},~\forall
    n\geq 179.\qedhere
  \end{displaymath}
\end{proof}

\begin{proof}[Proof of \Cref{prop:fixed-p}]
  Lassak \cite{Lassak1986} showed that \(\Gamma_4(K)\leq2^{-1/2}\) holds for
  each planar convex body. Thus,
  \begin{displaymath}
    \Gamma_4(B_p^2)\leq2^{-1/2}<\left(\frac{5}{6}\right)^{1/p}.
  \end{displaymath}
  For \(3\leq n\leq178\), \(n\neq6\), \Cref{lem:lp-lattice} gives
  \begin{displaymath}
    \Gamma_{2^n}(B_p^n)\leq\left(\frac{n}{n+\lceil n/5\rceil}\right)^{1/p} \leq \left(\frac{5}{6}\right)^{1/p}.
  \end{displaymath}
  \Cref{lem:two-n} implies that
  \begin{displaymath}
    \Gamma_{64}(B_p^6)\leq\Gamma_{12}(B_p^6)\leq \left(\frac{5}{6}\right)^{1/p}.
  \end{displaymath}
  \Cref{lem:lmw} and \eqref{eq:vartheta} show that
  \begin{displaymath}
    \Gamma_{2^n}(B_p^n)\leq\left(\frac{5}{6}\right)^{1/p},~\forall n\geq 179.\qedhere
  \end{displaymath}
\end{proof}

\section{Uniform estimates for \texorpdfstring{\(\ell_p\)}{lp} balls}
\label{sec:uniform-p-modular}

This section is mainly devoted to bounding \(\sup_{n\geq 2}\Gamma_{2^n}(B_p^n)\)
when \(p\in [\pi_0, Q_n(\beta)]\), where \(\pi_0\) and \(Q_n(\beta)\) are defined below.

For \(\beta>1\), put
\begin{displaymath}
  Q_n(\beta)=\frac{\ln n}{\ln\beta}.
\end{displaymath}
For \(M>0\), set
\begin{displaymath}
  B_p^n(M)=\{x\in B_p^n:|x_i|\leq Mn^{-1/p},~\forall i\in[n]\}.
\end{displaymath}

A Rademacher random variable takes the values \(-1\) and \(1\), each with
probability \(1/2\).

For \(S\subseteq[n]\), we shall use the notation
\begin{displaymath}
  B_p^S=\{z\in B_p^n:z_i=0,~\forall i\notin S\}\quad\text{and}\quad B_p^{S^c}=\{z\in B_p^n:z_i=0,~\forall i\in S\}.
\end{displaymath}
For \(x\in\mathbb R^n\), let \(x_S\) and \(x_{S^c}\) be defined by
\begin{displaymath}
  (x_S)_i=
  \begin{cases}
    x_i,&i\in S,\\
    0,&i\not\in S,
  \end{cases}\quad \text{and}\quad   (x_{S^c})_i=
  \begin{cases}
    0,&i\in S,\\
    x_i,&i\not\in S.
  \end{cases}
\end{displaymath}
For \(p\in[1,\infty)\) and \(R>1\), set
\begin{displaymath}
  S_R(x)=\{j\in[n]: |x_j|n^{1/p}>R\},~\forall x\in B_p^n
\end{displaymath}
and
\begin{displaymath}
  \mathcal S_R(n,p)=\{S\subseteq[n]:\#S\leq\lfloor R^{-p}n\rfloor\}.
\end{displaymath}

\begin{lemma}[cf.\ \textup{\cite[Lemma 2.4]{BourgainLindenstraussMilman1989}}]
  \label{lem:modular-net}
  Let \(\nu\in(0,1)\) and \(d\in\{0,\ldots,n\}\). The unit ball \(B\) of each
  \(d\)-dimensional normed linear space has a \(\nu\)-net whose cardinality is at
  most \((1+2/\nu)^d\).
\end{lemma}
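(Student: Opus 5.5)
The plan is the standard volumetric argument: a maximal separated set is automatically a net, and a packing bound controls its size. Let \(\|\cdot\|\) be the norm whose unit ball is \(B\), and identify the space with \(\R^d\) through a linear isomorphism. Lebesgue measure \(\operatorname{vol}\) is then defined up to a constant factor, which cancels in every ratio below. The case \(d=0\) is trivial, since \(B=\{0\}\) and the net \(\{0\}\) has cardinality \(1=(1+2/\nu)^0\); so assume \(d\geq1\).

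First I choose a maximal subset \(\{x_1,\ldots,x_m\}\subseteq B\) satisfying \(\|x_i-x_j\|>\nu\) for all \(i\neq j\). Such a set is finite: by the disjointness step below, any \(\nu\)-separated subset of \(B\) has at most \((1+2/\nu)^d\) elements, so a separated set of maximal cardinality exists. Maximality gives the net property directly. If some \(y\in B\) had \(\|y-x_i\|>\nu\) for every \(i\), then \(y\) could be added to the set, contradicting maximality. Hence every point of \(B\) lies within distance \(\nu\) of some \(x_i\), so \(\{x_i\}\) is a \(\nu\)-net of \(B\).

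Next, the balls \(x_i+\frac{\nu}{2}\operatorname{int}B\) are pairwise disjoint. Indeed, a common point \(z\) of two of them would give \(\|x_i-x_j\|\leq\|x_i-z\|+\|z-x_j\|<\nu\), contradicting separation. Each of these balls lies in \((1+\nu/2)B\), by the triangle inequality and \(\|x_i\|\leq1\). Comparing volumes, and using \(\operatorname{vol}(tB)=t^d\operatorname{vol}(B)\) together with \(0<\operatorname{vol}(B)<\infty\) (as \(B\) is a convex body in \(\R^d\)), gives
\[
m\left(\frac{\nu}{2}\right)^d\operatorname{vol}(B)\leq\left(1+\frac{\nu}{2}\right)^d\operatorname{vol}(B).
\]
This yields \(m\leq(1+2/\nu)^d\). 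The estimate never used \(\nu<1\), so it holds for every \(\nu>0\).

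There is no real obstacle in this argument. The only points needing care are that \(B\) is a genuine convex body in the identified \(\R^d\), so its volume is positive and finite, and the degenerate case \(d=0\). Both were handled above.
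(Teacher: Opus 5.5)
Your proof is correct and is the standard volumetric argument: take a maximal $\nu$-separated subset of $B$, note that the balls of radius $\nu/2$ around its points are disjoint and lie in $(1+\nu/2)B$, and compare volumes. The paper gives no proof of its own and simply cites Bourgain--Lindenstrauss--Milman, where the estimate comes from this same argument. One small simplification: because your separation is strict, the closed balls $x_i+\frac{\nu}{2}B$ are already pairwise disjoint, so you do not need to pass to interiors or use $\operatorname{vol}(\operatorname{int}B)=\operatorname{vol}(B)$.
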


\begin{lemma}
  \label{lem:modular-sparse-coordinates}
  Let \(p\in[1,\infty)\), \(R>1\), and \(x\in B_p^n\). Then
  \(\#S_R(x)<R^{-p}n\).
\end{lemma}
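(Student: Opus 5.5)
This is a Markov/Chebyshev counting argument applied to the \(p\)-th powers of the coordinates. By the definition of \(S_R(x)\), each index \(j\in S_R(x)\) satisfies \(|x_j|n^{1/p}>R\), which is equivalent to \(|x_j|^p>R^p/n\), since \(t\mapsto t^p\) is strictly increasing on \([0,\infty)\). Summing over \(j\in S_R(x)\) and using \(x\in B_p^n\) gives
\begin{displaymath}
  \#S_R(x)\cdot\frac{R^p}{n}\leq\sum_{j\in S_R(x)}|x_j|^p\leq\sum_{j\in[n]}|x_j|^p=\norm{x}_p^p\leq 1 .
\end{displaymath}

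The only point needing care is that the inequality is strict. If \(S_R(x)=\emptyset\), then \(\#S_R(x)=0<R^{-p}n\), because \(R^{-p}n>0\). If \(S_R(x)\neq\emptyset\), then the first inequality above is strict: it is a sum of finitely many (at least one) strict inequalities \(|x_j|^p>R^p/n\). Hence \(\#S_R(x)\cdot R^p/n<1\), that is, \(\#S_R(x)<R^{-p}n\).

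The hypothesis \(R>1\) is not needed for this argument; it only matters later, where the bound is used in the form \(S_R(x)\in\mathcal S_R(n,p)\). That follows because \(\#S_R(x)\) is an integer strictly less than \(R^{-p}n\), so \(\#S_R(x)\leq\lfloor R^{-p}n\rfloor\). There is no real obstacle; the only step requiring attention is the strictness case split above.
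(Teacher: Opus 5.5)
Your argument is correct and is essentially the paper's proof: both sum the strict inequalities \(|x_j|^p n>R^p\) over \(j\in S_R(x)\) and compare the result with \(\norm{x}_p^p\leq 1\). Your explicit case split for \(S_R(x)=\emptyset\) is a small improvement. The paper's chain starts with the strict inequality \(R^p\cdot\#S_R(x)<\sum_{j\in S_R(x)}|x_j|^p n\), which becomes \(0<0\) in that case, whereas your split handles it correctly.
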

\begin{proof}
  Since \(x\in B_p^n\), we have
  \begin{displaymath}
    R^p\cdot(\#S_R(x))<\sum_{j\in S_R(x)} \left(|x_j|^pn \right)\leq\sum_{j\in[n]}\left(|x_j|^pn  \right) \leq n.\qedhere
  \end{displaymath}
\end{proof}

Let \(H\) be the function on \([0,1]\) defined by
\begin{displaymath}
  H(0)=H(1)=0\quad\text{and}\quad H(t)=-t\ln t-(1-t)\ln(1-t),~\forall t\in(0,1).
\end{displaymath}
We shall use the elementary estimate
\begin{equation}
  \label{eq:modular-entropy-upper}
  H(t)\leq t\ln\frac1t+t,\quad 0<t<1.
\end{equation}
For \(M>0\) and \(\alpha\in(0,M)\), write
\begin{displaymath}
  \rho(M,\alpha)=\left(\alpha^4+\left(1-\frac{\alpha}{M}\right)^4\right)^{1/4}.
\end{displaymath}

\begin{lemma}
  \label{lem:modular-net-budget}
  Let \(p\in[1,\infty)\), \(R>1\), \(R^{-p}<1/2\), and \(\nu\in(0,1)\). For
  every \(S\in\mathcal S_R(n,p)\), let \(\mathcal W_S\) be a \(\nu\)-net in
  \(B_p^S\) satisfying
  \begin{displaymath}
    \#\mathcal W_S\leq \left(1+\frac2\nu\right)^{\#S}.
  \end{displaymath}
  Put
  \begin{displaymath}
    K_{R,\nu}(n,p)=\#\mathcal S_R(n,p)
    \left(1+\frac2\nu\right)^{\lfloor R^{-p}n\rfloor}.
  \end{displaymath}
  Then
  \begin{displaymath}
    \#\mathcal S_R(n,p)\leq (n+1)\exp\{nH(R^{-p})\}
  \end{displaymath}
  and
  \begin{displaymath}
    \sum_{S\in\mathcal S_R(n,p)}\#\mathcal W_S\leq K_{R,\nu}(n,p).
  \end{displaymath}
  Moreover,
  \begin{displaymath}
    K_{R,\nu}(n,p)\leq (n+1)\exp\left\{n\left(H(R^{-p})+R^{-p}\ln\left(1+\frac2\nu\right)\right)\right\}.
  \end{displaymath}
\end{lemma}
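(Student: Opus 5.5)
The plan is to prove the three assertions of \Cref{lem:modular-net-budget} in order. Each one is a counting estimate, so the work is mostly bookkeeping.

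\textbf{Bound on \(\#\mathcal S_R(n,p)\).} Set \(k=\lfloor R^{-p}n\rfloor\) and \(t=R^{-p}\in(0,1/2)\). Then
\begin{displaymath}
  \#\mathcal S_R(n,p)=\sum_{i=0}^{k}\binom ni .
\end{displaymath}
The sum has at most \(n+1\) terms, so it suffices to show that \(\binom ni\leq e^{nH(t)}\) for every \(i\leq k\). For this I will use the standard binomial-theorem argument. Since \(t\leq 1/2\), we have \(t/(1-t)\leq1\), and therefore for \(i\leq k\leq tn\),
\begin{displaymath}
  1\geq \binom ni t^i(1-t)^{n-i}\geq \binom ni t^{tn}(1-t)^{n-tn}=\binom ni e^{-nH(t)} .
\end{displaymath}
The second inequality holds because \(t^i(1-t)^{n-i}=(1-t)^n\bigl(t/(1-t)\bigr)^i\) is nonincreasing in \(i\). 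This gives the first claim. The edge cases need a brief check. If \(k=0\), then the family consists of \(\emptyset\) alone and the bound is trivial. The value \(t=1/2\) is excluded by hypothesis, so \(H(t)>0\) is well defined and the monotonicity argument applies.

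\textbf{Bound on the total net size.} For each \(S\in\mathcal S_R(n,p)\) we have \(\#S\leq k\). Since \(1+2/\nu>1\), this gives
\begin{displaymath}
  \#\mathcal W_S\leq\left(1+\frac2\nu\right)^{\#S}\leq\left(1+\frac2\nu\right)^{k}.
\end{displaymath}
Summing over \(S\in\mathcal S_R(n,p)\) yields \(\sum_S\#\mathcal W_S\leq K_{R,\nu}(n,p)\) directly from the definition of \(K_{R,\nu}(n,p)\).

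\textbf{Bound on \(K_{R,\nu}(n,p)\).} Insert the first claim into the definition of \(K_{R,\nu}(n,p)\), and use \(k\leq R^{-p}n\) to get
\begin{displaymath}
  \left(1+\frac2\nu\right)^{k}\leq\exp\left\{nR^{-p}\ln\left(1+\frac2\nu\right)\right\}.
\end{displaymath}
Multiplying the two exponentials gives the stated bound.

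No step here is a real obstacle. The only point requiring care is the entropy bound on partial binomial sums. There one must make sure that \(t\leq 1/2\) is used so that the worst case occurs at \(i=tn\), even though \(tn\) need not be an integer. The monotonicity argument above handles the non-integer case without any change.
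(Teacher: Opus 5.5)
Your proof is correct and has the same structure as the paper's: you write the count as $\sum_{i\le k}\binom ni$, bound each of the at most $n+1$ terms by $e^{nH(R^{-p})}$, and finish with $\#S\le k\le R^{-p}n$. The only difference is the per-term estimate: you insert the fixed weight $t=R^{-p}$ into $(t+(1-t))^n$ and use that $t^i(1-t)^{n-i}$ is nonincreasing in $i$ for $t\le 1/2$, while the paper inserts $t=k/n$ to get $\binom nk\le e^{nH(k/n)}$ and then uses that $H$ is increasing on $(0,1/2)$; a small bonus of your version is that summing over $i\le k$ before bounding gives $\#\mathcal S_R(n,p)\le e^{nH(R^{-p})}$ without the factor $n+1$.
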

\begin{proof}
  Put \(a=R^{-p}\) and \(m=\lfloor an\rfloor\). Then
  \begin{displaymath}
    \#\mathcal S_R(n,p)=\sum_{k=0}^m\binom nk.
  \end{displaymath}
  For \(0\leq k\leq n\), we have
  \begin{displaymath}
    \binom nk\leq \exp\left\{nH\left(\frac{k}{n}\right)\right\}.
  \end{displaymath}
  Indeed, this is clear for \(k=0\) and \(k=n\). If \(0<k<n\), then
  \begin{displaymath}
    1=\left(\frac{k}{n}+1-\frac{k}{n}\right)^n
    \geq
    \binom nk\left(\frac{k}{n}\right)^k
    \left(1-\frac{k}{n}\right)^{n-k},
  \end{displaymath}
  which gives the estimate after taking logarithms. Since \(a<1/2\) and the
  function \(H\) is increasing on \((0,1/2)\), by the unimodality of binomial coefficients,
  \begin{displaymath}
    \#\mathcal S_R(n,p) \leq  (n+1)\exp\{nH(a)\} =  (n+1)\exp\{nH(R^{-p})\},
  \end{displaymath}
  which is the first assertion. The second estimate follows directly from the
  definition of \(K_{R,\nu}(n,p)\). Combining the first assertion with the
  definition of \(K_{R,\nu}(n,p)\) gives the asserted upper bound for
  \(K_{R,\nu}(n,p)\).
\end{proof}

\begin{lemma}
  \label{lem:modular-sign-hit}
  Let \(S\subseteq[n]\), \(d=\#S\), and \(p\in[4,\infty)\). Let
  \(M,\tau,\chi>0\), \(\alpha\in(0,M)\), and \(\xi\in(0,1/4)\).
  Suppose that
  \begin{displaymath}
    d\xi\geq2,\quad \tau^p\geq4,\quad\text{and}\quad
    \rho(M,\alpha)^p+\xi(\tau+\alpha)^p\leq\chi^p.
  \end{displaymath}
  Let \(y\in B_p^S\) be a point satisfying \(|y_i|\leq Md^{-1/p},~\forall i\in
  S\). Let \(V\) be the random vector given by
  \begin{displaymath}
    V_i=\sigma_i\alpha d^{-1/p},~\forall i\in S \quad\text{and}\quad
    V_i=0,~\forall i\notin S,
  \end{displaymath}
  where the \(\sigma_i\)'s are independent Rademacher random variables. Then
  \begin{displaymath}
    \mathbb P\{\norm{y-V}_p\leq\chi\}\geq 2^{-d}\exp\left\{\frac d2\xi\ln\frac1{4\xi}\right\}.
  \end{displaymath}
\end{lemma}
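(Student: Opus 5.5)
The idea is to find many sign patterns \(\sigma\in\{-1,1\}^S\) for which \(\norm{y-V}_p\leq\chi\). Each pattern has probability exactly \(2^{-d}\), so it suffices to exhibit at least \(\exp\{\frac d2\xi\ln\frac1{4\xi}\}\) good patterns. Write \(t_i=|y_i|d^{1/p}\in[0,M]\) for \(i\in S\). Then \(\sum_{i\in S}t_i^p\leq d\), and
\begin{displaymath}
  \norm{y-V}_p^p=\frac1d\sum_{i\in S}\bigl|t_i\operatorname{sgn}(y_i)-\sigma_i\alpha\bigr|^p .
\end{displaymath}
Say that \(\sigma_i\) \emph{agrees} with \(y_i\) if \(\sigma_i=\operatorname{sgn}(y_i)\), where we declare \(\operatorname{sgn}(0)=1\). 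Agreeing coordinates contribute \(|t_i-\alpha|^p\) and disagreeing ones contribute \((t_i+\alpha)^p\).

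\textbf{Step 1: agreeing coordinates.} For \(t\in[0,M]\) we claim
\begin{displaymath}
  |t-\alpha|\leq\max\{\alpha,(1-\alpha/M)t\}.
\end{displaymath}
If \(t<\alpha\), the bound \(\alpha\) is trivial. If \(t\geq\alpha\), then \(t-\alpha\leq(1-\alpha/M)t\) is equivalent to \(t\leq M\). Consequently
\begin{displaymath}
  |t-\alpha|^p\leq\alpha^p+(1-\alpha/M)^pt^p .
\end{displaymath}
Summing over any subset of \(S\), dividing by \(d\), and using \(\sum t_i^p\leq d\), the agreeing coordinates contribute at most \(\alpha^p+(1-\alpha/M)^p\). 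Since \(p\geq4\) and \(\norm{\cdot}_p\leq\norm{\cdot}_4\) on \(\R^2\), this is at most \(\rho(M,\alpha)^p\).

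\textbf{Step 2: controlled disagreements.} Let
\begin{displaymath}
  T=\{i\in S:\ t_i\leq\tau\}.
\end{displaymath}
As in \Cref{lem:modular-sparse-coordinates}, \(\#(S\setminus T)\leq d/\tau^p\leq d/4\), so \(\#T\geq 3d/4\). Put \(k=\lfloor\xi d\rfloor\). Let the disagreement set \(D\) be any \(k\)-subset of \(T\), with all coordinates outside \(D\) agreeing. Then
\begin{displaymath}
  \norm{y-V}_p^p\leq\rho(M,\alpha)^p+\frac1d\,k(\tau+\alpha)^p\leq\rho(M,\alpha)^p+\xi(\tau+\alpha)^p\leq\chi^p .
\end{displaymath}
Distinct sets \(D\) give distinct sign patterns, so the number of good patterns is at least \(\binom{\#T}{k}\).

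\textbf{Step 3: counting.} Since \(\xi d\geq2\), we have \(k\geq\xi d-1\geq\xi d/2\). Also \(k\leq\xi d<d/4\leq\#T\). Therefore
\begin{displaymath}
  \binom{\#T}{k}\geq\left(\frac{\#T}{k}\right)^k\geq\left(\frac{3}{4\xi}\right)^k\geq\left(\frac1{4\xi}\right)^{\xi d/2}.
\end{displaymath}
The last inequality uses \(\frac1{4\xi}>1\) together with \(k\geq\xi d/2\). Multiplying by \(2^{-d}\) gives the claimed bound. The main point to get right is Step 1: obtaining the linear-in-\(t^p\) bound on \([0,M]\) and passing from the \(p\)-sum to \(\rho^p\) via \(p\geq4\). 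The rest is routine counting.
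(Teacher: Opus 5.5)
Your proof is correct and follows the paper's argument essentially step by step. It uses the same coordinatewise bound $|t-\alpha|^p\le\alpha^p+(1-\alpha/M)^p t^p$ on $[0,M]$ together with $\ell_p\le\ell_4$ to reach $\rho(M,\alpha)^p$, the same set $T=\{i\in S: t_i\le\tau\}$ with $\#T\ge 3d/4$, sign flips on $k$-subsets of $T$ with $k=\lfloor\xi d\rfloor$, and the same count $\binom{\#T}{k}\ge(\#T/k)^k\ge(3/(4\xi))^k$.
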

\begin{proof}
  Write \(\rho=\rho(M,\alpha)\).
  We first record the elementary estimate which will be used coordinatewise. If
  \(a\in[0,M]\), then
  \begin{displaymath}
    |a-\alpha|^p\leq\alpha^p+\left(1-\frac{\alpha}{M}\right)^pa^p.
  \end{displaymath}
  Indeed, this is clear for \(a\leq\alpha\). If \(a\geq\alpha\), then
  \(a-\alpha\leq(1-\alpha/M)a\), since \(a\leq M\).

  For every \(i\in S\), choose \(\varepsilon_i\in\{-1,1\}\) such that
  \(\varepsilon_i y_i=|y_i|\), and set \(a_i=|y_i|d^{1/p}\). Hence
  \(a_i\in[0,M],~\forall i\in S\). Since \(y\in B_p^S\),
  \begin{displaymath}
    \frac{1}{d}\sum_{i\in S}a_i^p=\sum_{i\in S}|y_i|^p\leq 1.
  \end{displaymath}
  Since \(p\geq4\), we have
  \begin{align*}
    \sum_{i\in S}\left|y_i-\varepsilon_i\alpha d^{-1/p}\right|^p&=\frac{1}{d}\sum_{i\in S}|a_i-\alpha|^p\\
    &\leq \alpha^p+\left(1-\frac{\alpha}{M}\right)^p\frac{1}{d}\sum_{i\in S}a_i^p\\
    &\leq \alpha^p+\left(1-\frac{\alpha}{M}\right)^p\\
    &\leq \left(\alpha^4+\left(1-\frac{\alpha}{M}\right)^4\right)^{p/4}=\rho^p.
  \end{align*}
  Let
  \begin{displaymath}
    \mathcal L=\{i\in S:a_i\leq\tau\}.
  \end{displaymath}
  We have
  \begin{displaymath}
    \#(S\setminus\mathcal L)\leq\frac{d}{\tau^p}\leq\frac{d}{4}\quad\text{and}\quad \#\mathcal L\geq\frac{3d}{4}.
  \end{displaymath}
  Put \(k=\lfloor d\xi\rfloor\). Since \(d\xi\geq2\) and \(\xi<1/4\), we have
  \begin{displaymath}
    1\leq k,\quad k\geq\frac{d\xi}{2},\quad\text{and}\quad k\leq d\xi\leq\#\mathcal L.
  \end{displaymath}

  For each \(E\subseteq\mathcal L\) with \(\#E=k\), consider the following realization of
  \(V\)
  \begin{displaymath}
    \sigma_i=
    \begin{cases}
      -\varepsilon_i,& i\in E,\\
      \varepsilon_i,& i\in S\setminus E.
    \end{cases}
  \end{displaymath}
  We have
  \begin{align*}
    \norm{y-V}_p^p &= \frac1d\sum_{i\in S\setminus E}|a_i-\alpha|^p + \frac1d\sum_{i\in E}|a_i+\alpha|^p\\
    &\leq \frac1d\sum_{i\in S}|a_i-\alpha|^p + \frac{k}{d}(\tau+\alpha)^p\\
    &\leq  \rho^p+\xi(\tau+\alpha)^p \leq \chi^p.
  \end{align*}
  Different choices of \(E\) give different realizations of the signs on \(S\).
  Therefore
  \begin{displaymath}
    \mathbb P\{\norm{y-V}_p\leq\chi\} \geq 2^{-d}\binom{\#\mathcal L}{k}.
  \end{displaymath}
  Finally,
  \begin{displaymath}
    \ln\binom{\#\mathcal L}{k} \geq k\ln\frac{\#\mathcal L}{k} \geq \frac{d\xi}{2}\ln\frac3{4\xi}\geq\frac d2\xi\ln\frac1{4\xi}.
  \end{displaymath}
  This gives the desired lower bound.
\end{proof}

\begin{lemma}
  \label{lem:modular-lifting}
  Let \(p\in[1,\infty)\), \(R>1\), \(\eta,\nu,\chi>0\), and \(M\geq R+\eta\).
  For each \(S\in\mathcal S_R(n,p)\), let \(\mathcal W_S\) be a \(\nu\)-net in
  \(B_p^S\), \(\mathcal Y_S\) be an \(\eta n^{-1/p}\)-net in \(B_p^{S^c}\), and
  \(\mathcal V_S\subseteq B_p^{S^c}\). Assume that, for every \(S\in\mathcal
  S_R(n,p)\) and every \(y\in\mathcal Y_S\cap B_p^n(M)\), there exists
  \(v\in\mathcal V_S\) such that
  \begin{displaymath}
    \norm{y-v}_p\leq\chi.
  \end{displaymath}
  Then
  \begin{displaymath}
    B_p^n\subseteq C+\left((\chi+\eta)^p+\nu^p\right)^{1/p}B_p^n,
  \end{displaymath}
  where
  \begin{displaymath}
    C=\{w+v:\ S\in\mathcal S_R(n,p),\ w\in\mathcal W_S,\ v\in\mathcal V_S\}.
  \end{displaymath}
\end{lemma}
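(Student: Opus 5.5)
Fix \(x\in B_p^n\) and set \(S=S_R(x)\), the set of coordinates with \(|x_j|n^{1/p}>R\). By \Cref{lem:modular-sparse-coordinates}, \(\#S<R^{-p}n\), so \(\#S\leq\lfloor R^{-p}n\rfloor\) and hence \(S\in\mathcal S_R(n,p)\). We write \(x=x_S+x_{S^c}\) and approximate the two pieces separately.

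For the head, note that \(\norm{x_S}_p\leq\norm{x}_p\leq1\), so \(x_S\in B_p^S\). Since \(\mathcal W_S\) is a \(\nu\)-net in \(B_p^S\), there is \(w\in\mathcal W_S\) with \(\norm{x_S-w}_p\leq\nu\). We may take the net points inside \(B_p^S\), so that \(x_S-w\) is supported in \(S\).

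For the tail, \(x_{S^c}\in B_p^{S^c}\), and every coordinate satisfies \(|x_j|\leq Rn^{-1/p}\) because \(j\notin S_R(x)\). Choose \(y\in\mathcal Y_S\) with \(\norm{x_{S^c}-y}_p\leq\eta n^{-1/p}\). Then \(y\in B_p^{S^c}\subseteq B_p^n\). Each coordinate obeys
\[
|y_j|\leq |x_j|+|x_j-y_j|\leq Rn^{-1/p}+\norm{x_{S^c}-y}_p\leq (R+\eta)n^{-1/p}\leq Mn^{-1/p},
\]
so \(y\in\mathcal Y_S\cap B_p^n(M)\). The hypothesis then gives \(v\in\mathcal V_S\) with \(\norm{y-v}_p\leq\chi\). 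By the triangle inequality,
\[
\norm{x_{S^c}-v}_p\leq \eta n^{-1/p}+\chi\leq\chi+\eta,
\]
using \(n^{-1/p}\leq1\). This is the only point where some care is needed: the coordinatewise bound on \(y\) comes from the full \(\ell_p\) distance \(\eta n^{-1/p}\), which is what makes the hypothesis \(M\geq R+\eta\) sufficient.

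Now put \(c=w+v\in C\). The vector \(x_S-w\) is supported on \(S\), and \(x_{S^c}-v\) is supported on \(S^c\) because \(v\in B_p^{S^c}\). Their supports are disjoint, so the \(p\)-th powers of their norms add:
\[
\norm{x-c}_p^p=\norm{x_S-w}_p^p+\norm{x_{S^c}-v}_p^p\leq\nu^p+(\chi+\eta)^p.
\]
Hence \(x\in c+((\chi+\eta)^p+\nu^p)^{1/p}B_p^n\). Since \(x\in B_p^n\) was arbitrary, this proves the inclusion.

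The only subtlety is that net points of \(\mathcal W_S\) must be supported in \(S\) for the disjoint-support additivity. This holds automatically because \(\mathcal W_S\subseteq B_p^S\), and the same is true of \(\mathcal V_S\subseteq B_p^{S^c}\).
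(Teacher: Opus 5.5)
Your proof is correct and follows the paper's argument step for step. You take \(S=S_R(x)\), approximate \(x_S\) from \(\mathcal W_S\) and \(x_{S^c}\) by some \(y\in\mathcal Y_S\), check \(y\in B_p^n(M)\) via \(|y_j|\leq(R+\eta)n^{-1/p}\), obtain \(v\) from the hypothesis, and combine the two errors using disjoint supports (the coordinate bound should be read for \(j\notin S\), with \(y_j=0\) for \(j\in S\), as the paper notes).
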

\begin{proof}
  Let \(x\in B_p^n\), and put \(S=S_R(x)\). By
  \Cref{lem:modular-sparse-coordinates}, \(S\in\mathcal S_R(n,p)\). Choose
  \(w\in\mathcal W_S\) and \(y\in\mathcal Y_S\) such that
  \begin{displaymath}
    \norm{x_S-w}_p\leq\nu\quad\text{and}\quad \norm{x_{S^c}-y}_p\leq\eta n^{-1/p}.
  \end{displaymath}
  If \(j\notin S\), then \(|x_j|\leq Rn^{-1/p}\), and hence
  \begin{displaymath}
    |y_j|\leq |x_j|+\eta n^{-1/p}\leq (R+\eta)n^{-1/p}\leq Mn^{-1/p}.
  \end{displaymath}
  Also \(y_j=0\) for \(j\in S\). Thus \(y\in\mathcal Y_S\cap B_p^n(M)\), and by
  the assumption there is \(v\in\mathcal V_S\) with
  \(\norm{y-v}_p\leq\chi\). The errors on \(S\) and \(S^c\) have disjoint
  supports, so
  \begin{displaymath}
    \norm{x-(w+v)}_p \leq \left((\chi+\eta n^{-1/p})^p+\nu^p\right)^{1/p} \leq
    \left((\chi+\eta)^p+\nu^p\right)^{1/p}.\qedhere
  \end{displaymath}
\end{proof}

\begin{figure}[htbp]
  \centering
  \begin{tikzpicture}[
    font=\small,
    box/.style={draw, rounded corners=2pt, align=center, inner sep=5pt},
    arrow/.style={-{Stealth[length=2.2mm]}, thick},
    every node/.style={inner sep=2pt}
  ]

    \node[box, minimum width=3.0cm, minimum height=0.9cm, fill=blue!6] (x) at (0,-1.0)
      {\(x\in B_p^n\)};
    \node[box, minimum width=3.3cm, minimum height=1.0cm, fill=gray!6] (S) at (0,-2.25)
      {\(S=S_R(x)\)\\ \(\#S\leq R^{-p}n\)};
    \node[box, minimum width=3.3cm, minimum height=0.9cm, fill=green!6] (split) at (0,-3.55)
      {\(x=x_S+x_{S^c}\)};

    \node[box, minimum width=3.5cm, minimum height=1.1cm, fill=orange!8] (net) at (-2.25,-5.05)
      {\(x_S\) and \(w\in\mathcal W_S\)\\ \(\norm{x_S-w}_p\leq\nu\)};
    \node[box, minimum width=3.7cm, minimum height=1.1cm, fill=purple!8] (fine) at (2.25,-5.05)
      {\(x_{S^c}\) and \(y\in\mathcal Y_S\)\\ \(\norm{x_{S^c}-y}_p\leq\eta n^{-1/p}\)};
    \node[box, minimum width=3.7cm, minimum height=1.1cm, fill=purple!12] (rand) at (2.25,-6.6)
      {\(v^{(S,i)}\) with signs\\ \(\norm{y-v^{(S,i)}}_p\leq\chi\)};

    \node[box, minimum width=3.2cm, minimum height=0.9cm, fill=gray!10] (center) at (0,-8.05)
      {\(w+v^{(S,i)}\)};
    \node[box, minimum width=7.5cm, minimum height=0.95cm, fill=gray!4] (radius) at (0,-9.25)
      {\(\norm{x-(w+v^{(S,i)})}_p\leq\left((\chi+\eta)^p+\nu^p\right)^{1/p}\)};

    \draw[arrow] (x) -- (S);
    \draw[arrow] (S) -- (split);
    \draw[arrow] (split.south west) -- (net.north);
    \draw[arrow] (split.south east) -- (fine.north);
    \draw[arrow] (fine) -- (rand);
    \draw[arrow] (net.south) -- (center.north west);
    \draw[arrow] (rand.south) -- (center.north east);
    \draw[arrow] (center) -- (radius);
  \end{tikzpicture}
  \caption{The core mechanism behind \Cref{prop:modular-middle-p-cover}.}
  \label{fig:middle-p-mechanism}
\end{figure}

Set
\begin{displaymath}
  \beta=\frac{91}{50},\quad \kappa=\frac{91}{100},\quad\text{and}\quad\pi_0=\frac{\ln4}{\ln(11/10)}>\frac{29}{2}.
\end{displaymath}

\begin{proposition}
  \label{prop:modular-middle-p-cover}
  There exists \(N_*<\infty\) such that
  \begin{displaymath}
    \Gamma_{2^n}(B_p^n)\leq \kappa
  \end{displaymath}
  whenever
  \begin{displaymath}
    n\geq N_*\quad\text{and}\quad p\in[\pi_0,Q_n(\beta)].
  \end{displaymath}
\end{proposition}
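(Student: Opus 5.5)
We apply \Cref{lem:modular-lifting} with \(R\), \(\eta\), \(\nu\), \(\chi\), \(M\) chosen as absolute constants so that \(\bigl((\chi+\eta)^p+\nu^p\bigr)^{1/p}\leq\kappa\) for every \(p\geq\pi_0\), and then count the centers. On the sparse part we take the nets \(\mathcal W_S\) of \Cref{lem:modular-net}; their total number is bounded by \Cref{lem:modular-net-budget}. For the bulk part \(\mathcal Y_S\) we take any \(\eta n^{-1/p}\)-net of \(B_p^{S^c}\); its size is irrelevant, provided it is at most \(\exp\{O(n\ln n)\}\), since it only enters through a union bound. The sets \(\mathcal V_S\) are produced randomly: for each \(S\) we take \(T\) independent copies of the sign vector \(V\) of \Cref{lem:modular-sign-hit} on the coordinate set \(S^c\), with \(d=n-\#S\). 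If \(y\in\mathcal Y_S\cap B_p^n(M)\), then \(|y_i|\leq Mn^{-1/p}\leq Md^{-1/p}\), so \Cref{lem:modular-sign-hit} applies. It gives \(\mathbb P\{\norm{y-V}_p\leq\chi\}\geq 2^{-d}e^{c_\xi d}\) with \(c_\xi=\tfrac12\xi\ln\tfrac1{4\xi}\). One caveat: \(V\) has \(\norm{V}_p=\alpha\), so we need \(\alpha\leq1\) to keep \(\mathcal V_S\subseteq B_p^{S^c}\), or else the centers need not lie in \(B_p^n\) at all; this harmless point is to be checked.

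The key step is to choose \(T=\lceil 2^{d}e^{-c_\xi d/2}\rceil\), or uniformly \(2^n e^{-c'n}\). Then, exactly as in the proof of \Cref{thm:simplex-half}, the failure probability for a fixed pair \((S,y)\) is at most \(\exp\{-e^{c_\xi d/2}\}\). A union bound over all \(S\in\mathcal S_R(n,p)\) and \(y\in\mathcal Y_S\), at most \(2^n\cdot(1+2n^{1/p}/\eta)^n=\exp\{O(n\ln n)\}\) pairs, is then still \(<1\) for \(n\geq N_*\). This fixes deterministic sets \(\mathcal V_S\).

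The total number of centers is \(\#C\leq K_{R,\nu}(n,p)\cdot 2^n e^{-c'n}\). By \Cref{lem:modular-net-budget} this is at most \((n+1)2^n\exp\{n(H(R^{-p})+R^{-p}\ln(1+2/\nu)-c')\}\), which is below \(2^n\) once \(R^{-p}\) is small enough that \(H(R^{-p})+R^{-p}\ln(1+2/\nu)<c'/2\). This is where the window of \(p\) enters. We take \(R=\beta=91/50\) and \(p\geq\pi_0\), so \(R^{-p}\leq\beta^{-\pi_0}\) is a small explicit constant, and \eqref{eq:modular-entropy-upper} controls \(H\). The upper limit \(p\leq Q_n(\beta)\) means \(\beta^p\leq n\), i.e.\ \(R^{-p}n\geq1\). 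This keeps \(\mathcal S_R(n,p)\) meaningful and also guarantees \(d\geq n/2\), so that \(d\xi\geq2\) for large \(n\). The condition \(\tau^p\geq4\) holds, for instance, with \(\tau=11/10\), since \(\pi_0=\ln4/\ln(11/10)\) gives exactly \((11/10)^{\pi_0}=4\).

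The main obstacle is the numerical compatibility of all the constraints. We need \(\rho(M,\alpha)^p+\xi(\tau+\alpha)^p\leq\chi^p\) with \(\chi+\eta\) and \(\nu\) small enough that \(((\chi+\eta)^p+\nu^p)^{1/p}\leq 91/100\), and this must hold uniformly for all \(p\geq\pi_0\). At the same time we need \(M\geq R+\eta\) and a gain \(c_\xi\) that beats the sparse-part entropy. My plan is to fix \(M=2\), \(\tau=11/10\), take \(\alpha\) near \(1/2\) so that \(\rho(2,\alpha)=(\alpha^4+(1-\alpha/2)^4)^{1/4}\) is about \(0.77\), and choose \(\xi=\kappa'^p\) decaying with \(p\). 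The last choice keeps \(\xi(\tau+\alpha)^p\) comparable to \(\rho^p\), since \((\tau+\alpha)\kappa'<\) something below \(\chi\), and then \(\chi\approx 0.9\). The trouble is that \(c_\xi\) then decays like \(\kappa'^p\) while \(R^{-p}=\beta^{-p}\) also decays exponentially, so we must verify \(c_\xi\gg R^{-p}\ln(1/R^{-p})\). This requires \(\kappa'>1/\beta\approx0.55\), which is presumably why \(\beta=91/50=2\kappa\) is chosen. With \(\nu\) a fixed small constant, \(\nu^p\) is negligible. I would carry out these inequalities explicitly at \(p=\pi_0\) and argue monotonicity in \(p\) for the rest of the window.
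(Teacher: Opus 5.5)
There is a genuine gap. Your architecture matches the paper's: the split along \(S_R(x)\) via \Cref{lem:modular-lifting}, \(\nu\)-nets on \(S\), Rademacher sign vectors on \(S^c\) analysed by \Cref{lem:modular-sign-hit}, a union bound over \(e^{O(n\ln n)}\) pairs \((S,y)\), and \(\#C\leq K\cdot T\). But the step you flag as the main obstacle fails with your choices, and the cause is the threshold \(R=\beta\).

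\textbf{Where the count breaks.} With \(R=\beta\), the cost of choosing the support is \(H(\beta^{-p})+\beta^{-p}\ln(1+2/\nu)\). At \(p=\pi_0\) one has \(\beta^{-\pi_0}\approx1.65\cdot10^{-4}\), so \(H(\beta^{-\pi_0})\approx1.6\cdot10^{-3}\). With \(M=2\), \(\alpha=1/2\), \(\tau=11/10\), \(\chi=9/10\) you get \(\rho\approx0.785\). The hypothesis \(\rho^p+\xi(\tau+\alpha)^p\leq\chi^p\) then forces \(\xi\leq(0.9^{\pi_0}-\rho^{\pi_0})/1.6^{\pi_0}\approx2\cdot10^{-4}\), i.e.\ \(c_\xi\approx7\cdot10^{-4}\).

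So the gain is already smaller than the entropy of the support alone. This holds even before the extra factor \(1/4\) in your requirement \(H(R^{-p})+R^{-p}\ln(1+2/\nu)<c'/2\) with \(c'=c_\xi/2\). There are at least \(e^{nH(R^{-p})}/(n+1)\) sets \(S\), and each needs roughly \(2^de^{-c_\xi d}\) sign vectors, so your center count exceeds \(2^n\).

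Structurally, \(R=\beta\) makes the single number \(1/\beta\) play two roles. It is the lower bound on the decay rate \(\kappa'\) of \(\xi\), needed at the top of the window. It is also the decay rate of the support entropy, which \(\kappa'^p\) must beat at the bottom of the window. Hence the interval \(1/\beta<\kappa'<\chi/(\tau+\alpha)\) you find leaves essentially no room.

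Re-tuning inside \(R=\beta\) does not rescue your bookkeeping. Taking, e.g., \(\alpha\approx1/4\) and \(M\) just above \(\beta+\eta\) raises \(c_\xi\) only to roughly \(4\cdot10^{-3}\). That might scrape past \(H(\beta^{-\pi_0})\) with lossless bookkeeping, but not past your \(c_\xi/4\) requirement.

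\textbf{The missing idea.} Decouple the sparse threshold from \(\beta\) and take it large. The paper uses \(R=3\), \(M=3+\eta\) with \(\eta=9/1000\), \(\alpha=2/5\), \(\tau=11/10\), \(\chi=9/10\).
\begin{itemize}
\item The larger \(M\) costs little: \(\rho\approx0.877<\chi\) and \(\varsigma=1-(\rho/\chi)^4\in(0.099,0.1)\).
\item Since \((\tau+\alpha)/\chi=5/3\), one may take \(\xi_p=\frac{\varsigma}{2}(5/3)^{-p}\).
\item The support cost \(H(3^{-p})+3^{-p}\ln41\) is then below \(\frac1{100}\xi_p\ln\frac1{4\xi_p}\) for every \(p\geq\pi_0\), with the ratio decaying like \((5/9)^p\) (\Cref{claim:modular-support-budget}).
\end{itemize}

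The window enters only through \(5/3<9/5<\beta\). For \(p\leq Q_n(\beta)\) this gives \(\xi_p\geq\frac{\varsigma}{2}d^{-1+\delta}\), hence \(L_{n,p}q_{n,p}\geq e^{\zeta n^\delta\ln n}/(2(n+1))\). That super-polynomial bound is what beats the \(e^{O(n\ln n)}\) union bound. Your stated consequence \(d\xi\geq2\) is not enough for this: you need the polynomial lower bound on \(\xi\).

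Two smaller misattributions:
\begin{itemize}
\item \(d\geq n/2\) follows from \(R^{-p}<1/2\), i.e.\ from \(p\geq\pi_0\), not from \(p\leq Q_n(\beta)\).
\item \(\beta=2\kappa\) is chosen so that, in the proof of \Cref{thm:uniform-p}, the cube cover \eqref{eq:cube-cover} gives \(n^{1/p}/2\leq\beta/2=\kappa\) when \(p\geq Q_n(\beta)\). It is not chosen to make \(\kappa'>1/\beta\) possible.
\end{itemize}
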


\begin{proof}
  Set
  \begin{displaymath}
    R=3,\quad \mathcal S=\mathcal S_3(n,p),\quad \beta_0=\frac95,\quad\text{and}\quad r=\frac{5}{3}.
  \end{displaymath}
  Since \(1<r<\beta_0<\beta\), we have
  \begin{displaymath}
    \delta:=1-\frac{\ln r}{\ln\beta_0}>0.
  \end{displaymath}

  \begin{claim}
    \label{claim:modular-dimension-transfer}
    There exists \(N_1<\infty\) such that, whenever
    \(n\geq N_1\), \(p\in[\pi_0,Q_n(\beta)]\), and
    \(S\in\mathcal S\), we have, with \(d=n-\#S\),
    \begin{displaymath}
      d\geq\frac{2n}{3}\quad\text{and}\quad p\leq\frac{\ln d}{\ln\beta_0}.
    \end{displaymath}
  \end{claim}
  \begin{proof}[Proof of \Cref{claim:modular-dimension-transfer}]
    Since \(p\geq\pi_0>4\), we have \(3^{-p}<1/3\). Thus
    \begin{displaymath}
      d=n-\#S\geq n-\lfloor3^{-p}n\rfloor\geq\frac{2n}{3}.
    \end{displaymath}
    Since \(\beta>\beta_0\), we may choose \(N_1\) so large that, for every
    \(n\geq N_1\),
    \begin{displaymath}
      Q_n(\beta)\leq\frac{\ln(2n/3)}{\ln\beta_0}.
    \end{displaymath}
    Hence \(p\leq\ln d/\ln\beta_0\).
  \end{proof}

  Take
  \begin{displaymath}
    \nu=\frac1{20}\quad\text{and}\quad \eta=\frac9{1000}.
  \end{displaymath}
  For each \(S\in\mathcal S\), choose a \(\nu\)-net \(\mathcal W_S\) in
  \(B_p^S\) and an \(\eta n^{-1/p}\)-net \(\mathcal Y_S\) in \(B_p^{S^c}\) with
  \begin{equation}
    \label{eq:cardinality-YS}
    \#\mathcal W_S\leq41^{\#S}\quad\text{and}\quad \#\mathcal Y_S\leq\left(1+\frac{2n^{1/p}}{\eta}\right)^n.
  \end{equation}
  This is possible by \Cref{lem:modular-net}. Indeed,
  \(1+2/\nu=41\), and, if \(d=n-\#S\), then
  \begin{displaymath}
    \#\mathcal Y_S \leq \left(1+\frac{2n^{1/p}}{\eta}\right)^d \leq
    \left(1+\frac{2n^{1/p}}{\eta}\right)^n.
  \end{displaymath}
  Put
  \begin{equation}
    \label{eq:Knq}
    K_{n,p}=41^{\lfloor3^{-p}n\rfloor}\#\mathcal S.
  \end{equation}
  Since \(p\geq\pi_0>4\), \Cref{lem:modular-net-budget} gives
  \begin{equation}
    \label{eq:sum-card-bound}
    \sum_{S\in\mathcal S}\#\mathcal W_S\leq K_{n,p}\leq (n+1)\exp\left\{n\left(H(3^{-p})+3^{-p}\ln41\right)\right\}.
  \end{equation}

  Let
  \begin{displaymath}
    \alpha=\frac25,\quad \chi=\frac9{10},\quad \tau=\frac{11}{10},
    \quad\text{and}\quad M=3+\eta=\frac{3009}{1000}.
  \end{displaymath}
  Put
  \begin{displaymath}
    \rho=\rho(M,\alpha)\quad\text{and}\quad \varsigma=1-\left(\frac{\rho}{\chi}\right)^4.
  \end{displaymath}
  One can verify
  \begin{displaymath}
    \frac{99}{1000}<\varsigma<\frac1{10} \quad\text{and}\quad \rho<\chi
  \end{displaymath}
  by numerical calculation. For \(p\geq\pi_0\), put
  \begin{displaymath}
    \xi_p=\frac{\varsigma}{2}r^{-p}.
  \end{displaymath}
  Then
  \begin{displaymath}
    0<\xi_p<\frac1{20}<\frac1{4e}.
  \end{displaymath}

  \begin{claim}
    \label{claim:modular-xi-large}
    There exists \(N_2\in(N_1,\infty)\cap \mathbb{Z}\) such that, whenever
    \(n\geq N_2\), \(p\in[\pi_0,Q_n(\beta)]\), and \(S\in\mathcal S\), we have,
    with \(d=n-\#S\),
    \begin{displaymath}
      d\xi_p\geq2.
    \end{displaymath}
  \end{claim}
  \begin{proof}[Proof of \Cref{claim:modular-xi-large}]
    Choose \(N_2\geq N_1\). By \Cref{claim:modular-dimension-transfer},
    \(p\leq\ln d/\ln\beta_0\). Therefore, by the definition of \(\xi_p\), we have
    \begin{equation}
      \label{eq:lower-bound-xi}
      \xi_p \geq  \frac{\varsigma}{2}r^{-\ln d/\ln\beta_0} = \frac{\varsigma}{2}d^{-1+\delta}.
    \end{equation}
    Hence \(d\xi_p\geq(\varsigma/2)d^\delta\). Increasing \(N_2\), if
    necessary, gives \(d\xi_p\geq2\).
  \end{proof}

  \begin{claim}
    \label{claim:modular-support-budget}
    There exists \(N_3\in(N_2,\infty)\cap\mathbb{Z}\) such that, whenever
    \(n\geq N_3\) and \(p\geq \pi_0\), one has
    \begin{displaymath}
      K_{n,p}\leq (n+1)\exp\left\{\frac n{100}\xi_p\ln\frac1{4\xi_p}\right\} \leq2^{n-1}.
    \end{displaymath}
  \end{claim}
  \begin{proof}[Proof of \Cref{claim:modular-support-budget}]
    For the first inequality, by \eqref{eq:sum-card-bound}, it suffices to show
    that
    \begin{displaymath}
      H(3^{-p})+3^{-p}\ln41 \leq \frac1{100}\xi_p\ln\frac1{4\xi_p}.
    \end{displaymath}
    By \eqref{eq:modular-entropy-upper} and the definition of $\xi_{p}$, we just
    need to show that
    \begin{displaymath}
      \frac{2}{\varsigma}\left(\frac59\right)^p \cdot \frac{p\ln3+1+\ln41}{p\ln(5/3)+\ln(1/(2\varsigma))}\leq \frac1{100}.
    \end{displaymath}
    Numerical calculation shows that
    \begin{displaymath}
      \ln 3 \cdot \ln(1/(2\varsigma))-(1+\ln41)\cdot(\ln(5/3))<0.
    \end{displaymath}
    By
    \begin{displaymath}
      \frac{\rm d}{{\rm d}p}\left(\frac{ap+b}{cp+d}\right)=\frac{ad-bc}{(cp+d)^{2}} 
    \end{displaymath}
    we know that
    \begin{displaymath}
      \frac{p\ln3+1+\ln41}{p\ln(5/3)+\ln(1/(2\varsigma))}
    \end{displaymath}
    is decreasing. Since
    \(p\geq\pi_0>29/2\) and \(\varsigma>99/1000\), we have, by numerical calculation, that
    \begin{displaymath}
      \frac{2}{\varsigma}\left(\frac59\right)^p\frac{p\ln3+1+\ln41}{p\ln(5/3)+\ln(1/(2\varsigma))}<
      \frac{1}{100}.      
    \end{displaymath}

    Now we prove the second inequality. Since the function \(t\mapsto
    t\ln(1/(4t))\) is increasing on \((0,1/(4e))\), \(0<\xi_p<1/(4e)\) gives
    \(\xi_p\ln(1/(4\xi_p))<1/(4e)\). Hence
    \begin{displaymath}
      K_{n,p}\leq (n+1)\exp\left\{\frac{n}{400e}\right\}\leq2^{n-1}
    \end{displaymath}
    for all sufficiently large \(n\).
  \end{proof}

  In the rest of the proof we assume first that \(n\geq N_3\) and
  \(p\in[\pi_0,Q_n(\beta)]\).

  For every \(S\in\mathcal S\), set
  \begin{displaymath}
    L_{n,p}=\left\lfloor\frac{2^n}{K_{n,p}}\right\rfloor,
  \end{displaymath}
  and choose independent random vectors \(v^{(S,i)},~i\in[L_{n,p}]\),
  supported on \(S^c\), with
  \begin{displaymath}
    v^{(S,i)}_j=\sigma^{(S,i)}_j\alpha(n-\#S)^{-1/p},~\forall j\notin
    S\quad\text{and}\quad v^{(S,i)}_j=0,~\forall j\in S,
  \end{displaymath}
  where the \(\sigma^{(S,i)}_j\)'s are independent Rademacher random variables.

  \begin{claim}
    \label{claim:modular-one-point-hit}
    For every \(S\in\mathcal S\), every \(y\in\mathcal Y_S\cap B_p^n(M)\), and
    every \(i\in[L_{n,p}]\), we have
    \begin{displaymath}
      \mathbb P\left\{\norm{y-v^{(S,i)}}_p\leq\chi\right\}\geq q_{n,p},
    \end{displaymath}
    where
    \begin{displaymath}
      q_{n,p}:=2^{-n}\exp\left\{\frac n2\xi_p\ln\frac1{4\xi_p}\right\}.
    \end{displaymath}
  \end{claim}
  \begin{proof}[Proof of \Cref{claim:modular-one-point-hit}]
    Let \(d=n-\#S\). Since \(y\) is supported on \(S^c\) and
    \(y\in B_p^n(M)\), we have \(|y_j|\leq Mn^{-1/p}\) for \(j\notin S\). Since
    \(d\leq n\), it follows that \(n^{-1/p}\leq d^{-1/p}\), and hence
    \begin{displaymath}
      |y_j|\leq Md^{-1/p},~\forall j\notin S.
    \end{displaymath}
    Moreover, \(\tau^p\geq\tau^{\pi_0}=4\). Since
    \begin{displaymath}
      \frac{\tau+\alpha}{\chi}=\frac53=r,
    \end{displaymath}
    and \(p\geq\pi_0>4\), we also have
    \begin{displaymath}
      \rho^p\leq\rho^4\chi^{p-4} = (1-\varsigma)\chi^p\quad\text{and}\quad\xi_p(\tau+\alpha)^p
      =
      \frac{\varsigma}{2}r^{-p}r^p\chi^p
      =
      \frac{\varsigma}{2}\chi^p.
    \end{displaymath}
    Thus \(\rho^p+\xi_p(\tau+\alpha)^p<\chi^p\). By
    \Cref{claim:modular-xi-large}, \(d\xi_p\geq2\). Applying
    \Cref{lem:modular-sign-hit} with the coordinate set \(S^c\) gives
    \begin{displaymath}
      \mathbb P\left\{\norm{y-v^{(S,i)}}_p\leq\chi\right\}\geq 2^{-d}\exp\left\{\frac d2\xi_p\ln\frac1{4\xi_p}\right\}.
    \end{displaymath}
    Put \(A_p=\xi_p\ln(1/(4\xi_p))\). Since \(A_p<2\ln2\) and \(d\leq n\), we have
    \begin{displaymath}
      \frac{2^{-d}\exp\{dA_p/2\}}{2^{-n}\exp\{nA_p/2\}} =\exp\left\{(n-d)\left(\ln2-\frac{A_p}{2}\right)\right\}\geq1.
    \end{displaymath}
    Hence the last lower bound is at least \(q_{n,p}\).
  \end{proof}

  \begin{claim}
    \label{claim:modular-trial-budget}
    There exist \(N_4\in(N_3,\infty)\) and \(\zeta>0\) such that, whenever
    \(n\geq N_4\) and \(p\in[\pi_0,Q_n(\beta)]\), we have
    \begin{displaymath}
      0.49\,nA_p\geq\zeta n^\delta\ln n\quad\text{and}\quad L_{n,p}q_{n,p}\geq \frac{\exp\{\zeta n^\delta\ln n\}}{2(n+1)}.
    \end{displaymath}
  \end{claim}
  \begin{proof}[Proof of \Cref{claim:modular-trial-budget}]
    Let \(c_0=\varsigma/2\). Then \(\xi_p=c_0r^{-p}\). Note again that the
    function \(t\mapsto t\ln(1/(4t))\) is increasing on \((0,1/(4e))\). From
    \eqref{eq:lower-bound-xi}, it follows that
    \begin{displaymath}
      A_p\geq c_0n^{-1+\delta}\ln\frac1{4c_0n^{-1+\delta}} = c_0n^{-1+\delta}\left((1-\delta)\ln n+\ln\frac1{4c_0}\right).
    \end{displaymath}
    Choose \(N_4\geq N_3\) so large that, for every \(n\geq N_4\),
    \begin{displaymath}
      (1-\delta)\ln n+\ln\frac1{4c_0}\geq\frac{1-\delta}{2}\ln n.
    \end{displaymath}
    With
    \begin{displaymath}
      \zeta:=\frac{0.49\,c_0(1-\delta)}{2},
    \end{displaymath}
    this gives \(0.49\,nA_p\geq\zeta n^\delta\ln n\).

    Since \(N_4\geq N_3\), \Cref{claim:modular-support-budget} gives
    \(K_{n,p}\leq2^{n-1}\). Hence
    \begin{displaymath}
      L_{n,p}=\left\lfloor\frac{2^n}{K_{n,p}}\right\rfloor\geq\frac{2^{n-1}}{K_{n,p}},
    \end{displaymath}
    and therefore
    \begin{displaymath}
      L_{n,p}q_{n,p}\geq\frac{\exp\{nA_p/2\}}{2K_{n,p}}.
    \end{displaymath}
    Using again \Cref{claim:modular-support-budget}, we have
    \begin{displaymath}
      K_{n,p}\leq(n+1)\exp\left\{\frac{nA_p}{100}\right\}.
    \end{displaymath}
    Consequently,
    \begin{displaymath}
      L_{n,p}q_{n,p} \geq \frac{\exp\{0.49\,nA_p\}}{2(n+1)}\geq \frac{\exp\{\zeta n^\delta\ln n\}}{2(n+1)}.\qedhere
    \end{displaymath}
  \end{proof}

  \begin{claim}
    \label{claim:modular-simultaneous-cover}
    There exists \(N_5\in(N_4,\infty)\cap \mathbb{Z}\) such that, whenever
    \(n\geq N_5\) and \(p\in[\pi_0,Q_n(\beta)]\), there is a deterministic
    choice of the vectors \(v^{(S,i)}\) such that, for every \(S\in\mathcal S\)
    and every \(y\in\mathcal Y_S\cap B_p^n(M)\), one has
    \begin{displaymath}
      \norm{y-v^{(S,i)}}_p\leq\chi
    \end{displaymath}
    for some \(i\in[L_{n,p}]\).
  \end{claim}
  \begin{proof}[Proof of \Cref{claim:modular-simultaneous-cover}]
    Let \(N_4\) and \(\zeta\) be as in \Cref{claim:modular-trial-budget}.
    Assume that \(n\geq N_4\). We shall use the estimates
    \begin{displaymath}
      \#\mathcal S\leq (n+1)\exp\{nH(3^{-p})\}
      \quad\text{and}\quad
      \#\mathcal Y_S\leq\left(1+\frac{2n^{1/p}}{\eta}\right)^n.
    \end{displaymath}
    They follow from \Cref{lem:modular-net-budget} and
    \eqref{eq:cardinality-YS}, respectively. By
    \Cref{claim:modular-one-point-hit}, \Cref{claim:modular-trial-budget}, and
    \eqref{eq:probability-estimate}, the probability that some \(y\in\mathcal
    Y_S\cap B_p^n(M)\) is missed for some \(S\in\mathcal S\) is at most
    \begin{align*}
      &\# \mathcal{S}\cdot \# \mathcal{Y}_S\cdot (1-q_{n,p})^{L_{n,p}}\\\leq& \# \mathcal{S}\cdot \# \mathcal{Y}_S\cdot\exp\left\{ -L_{n,p}q_{n,p} \right\}\\
                                                                     \leq&
                                                                       (n+1)\exp\{nH(3^{-p})\}\left(1+\frac{2n^{1/p}}{\eta}\right)^n\cdot \exp\left\{-\frac{\exp\{\zeta n^\delta\ln n\}}{2(n+1)}\right\}.
    \end{align*}
    Since \(p\geq\pi_0\), \(n^{1/p}\leq n^{1/\pi_0}\). Also
    \(3^{-p}\leq3^{-\pi_0}<1/2\), and \(H\) is increasing on \((0,1/2)\), so
    \(H(3^{-p})\leq H(3^{-\pi_0})\). Therefore the last expression is bounded
    above, uniformly for \(p\in[\pi_0,Q_n(\beta)]\), by
    \begin{displaymath}
      (n+1)\exp\{nH(3^{-\pi_0})\}\left(1+\frac{2n^{1/\pi_0}}{\eta}\right)^n
      \cdot \exp\left\{-\frac{\exp\{\zeta n^\delta\ln n\}}{2(n+1)}\right\},
    \end{displaymath}
    which is strictly less than \(1\) for all sufficiently large \(n\). Choose
    \(N_5\geq N_4\) so that this holds for every \(n\geq N_5\). Then the desired
    deterministic choice exists.
  \end{proof}

  Let \(N_*=N_5\), and choose the deterministic vectors supplied by
  \Cref{claim:modular-simultaneous-cover}. Define
  \begin{displaymath}
    C=\{w+v^{(S,i)}:\ S\in\mathcal S,\ w\in\mathcal W_S,\ i\in[L_{n,p}]\}.
  \end{displaymath}
	  By \eqref{eq:Knq} and the definition of \(L_{n,p}\),
	  \begin{displaymath}
	    \#C\leq K_{n,p}L_{n,p}\leq2^n.
	  \end{displaymath}
	  Since \(\alpha<1\), each \(v^{(S,i)}\) belongs to \(B_p^{S^c}\). Applying
	  \Cref{lem:modular-lifting} with
  \(\mathcal V_S=\{v^{(S,i)}:i\in[L_{n,p}]\}\), we obtain
  \begin{displaymath}
    B_p^n\subseteq \bigcup_{c\in C} \left(c+\left((\chi+\eta)^p+\nu^p\right)^{1/p}B_p^n\right).
  \end{displaymath}
  Since \(p\geq\pi_0\) and \(t\mapsto(A^t+B^t)^{1/t}\) is decreasing for
  \(A,B>0\), we have
  \begin{displaymath}
    \left((\chi+\eta)^p+\nu^p\right)^{1/p}\leq\left((\chi+\eta)^{\pi_0}+\nu^{\pi_0}\right)^{1/\pi_0}.
  \end{displaymath}
  It remains only to compare the last number with \(\kappa\). Since
  \(\chi+\eta=909/1000\), \(\nu=1/20\), and \(\kappa=91/100\), this is
  equivalent to
  \begin{displaymath}
    \left(\frac{909}{910}\right)^{\pi_0}+\left(\frac{50}{910}\right)^{\pi_0}<1,
  \end{displaymath}
  which can be verified by numerical calculation. Therefore \(\Gamma_{2^n}(B_p^n)\leq\kappa\).
\end{proof}

\begin{proof}[Proof of \Cref{thm:uniform-p}]
  Enlarge \(N_*\) in \Cref{prop:modular-middle-p-cover}, if necessary, so that
  \(N_*\geq\lceil\beta^{\pi_0}\rceil\). Put
  \begin{displaymath}
    P=\frac{\ln N_*}{\ln\beta}\quad\text{and}\quad \kappa_*=\left(\frac56\right)^{1/P}.
  \end{displaymath}
  Then \(P\geq\pi_0>14\). It can be verified by numerical calculation that
  \begin{displaymath}
    \kappa<\left(\frac56\right)^{1/14}<\kappa_*<1.
  \end{displaymath}

  We only need to consider the case when \(p\in[1,\infty)\). If \(p\leq P\),
  then \Cref{prop:fixed-p} gives
  \begin{displaymath}
    \Gamma_{2^n}(B_p^n)\leq\left(\frac56\right)^{1/p} \leq \left(\frac56\right)^{1/P} =\kappa_*.
  \end{displaymath}
  If \(p>P\) and \(n<N_*\), then \eqref{eq:cube-cover} gives
  \begin{displaymath}
    \Gamma_{2^n}(B_p^n)\leq \frac{n^{1/p}}2 \leq \frac{N_*^{1/P}}2 =\frac{\beta}{2} = \kappa <\kappa_*.
  \end{displaymath}
  It remains to assume that \(p>P\) and \(n\geq N_*\). If
  \(p\geq Q_n(\beta)\), then \eqref{eq:cube-cover} gives
  \begin{displaymath}
    \Gamma_{2^n}(B_p^n)\leq \frac{n^{1/p}}2\leq \frac{\beta}{2} =\kappa < \kappa_*.
  \end{displaymath}
  If \(p<Q_n(\beta)\), then \(p\in[\pi_0,Q_n(\beta)]\), and
  \Cref{prop:modular-middle-p-cover} gives
  \begin{displaymath}
    \Gamma_{2^n}(B_p^n)\leq\kappa<\kappa_*.\qedhere
  \end{displaymath}
\end{proof}

\appendix

\section{Exact finite verification}
\label{app:finite-verification}

The following exact integer computation verifies the finite range in
\Cref{lem:b1-finite}.

\begin{verbatim}
from math import comb, ceil

def M(n, k):
    return 1 + sum(
        (2 ** i) * comb(n, i) * comb(k - 1, i - 1)
        for i in range(1, min(n, k) + 1)
    )

bad = []
for n in range(3, 179):
    k = ceil(n / 5)
    if M(n, k) > 2 ** n:
        bad.append((n, k, M(n, k), 2 ** n))

print(bad)
\end{verbatim}

The output is
\begin{displaymath} [(6,2,73,64)].
\end{displaymath}
Thus the estimate holds for every \(3\leq n\leq178\) except \(n=6\), which is
handled separately by \Cref{lem:two-n}.

\section*{Acknowledgements}
The authors thank Prof. Lingxu Meng for his valuable suggestions, which helped
improve the presentation of this paper.

\bibliographystyle{amsplain}
\bibliography{ref}

@article{Hadwiger1957,
  author  = {Hadwiger, H.},
  title   = {{Ungel{\"o}ste Probleme, Nr. 20}},
  journal = {Elem. Math.},
  volume  = {12},
  pages   = {121},
  year    = {1957}
}

@article{Levi1954,
  author  = {Levi, F. W.},
  title   = {{Ein geometrisches {\"U}berdeckungsproblem}},
  journal = {Arch. Math. (Basel)},
  volume  = {5},
  number  = {4--6},
  pages   = {476--478},
  year    = {1954},
  doi     = {10.1007/bf01898393}
}

@article{BourgainLindenstraussMilman1989,
  author  = {Bourgain, J. and Lindenstrauss, J. and Milman, V.},
  title   = {Approximation of zonoids by zonotopes},
  journal = {Acta Math.},
  volume  = {162},
  pages   = {73--141},
  year    = {1989},
  doi     = {10.1007/BF02392877}
}

@book{BoltyanskiMartiniSoltan1997,
  author    = {Boltyanski, V. G. and Martini, H. and Soltan, P. S.},
  title     = {Excursions into Combinatorial Geometry},
  series    = {Universitext},
  publisher = {Springer-Verlag},
  address   = {Berlin},
  year      = {1997},
  doi       = {10.1007/978-3-642-59237-9}
}

@book{BrassMoserPach2005,
  author    = {Brass, P. and Moser, W. and Pach, J.},
  title     = {Research Problems in Discrete Geometry},
  publisher = {Springer},
  address   = {New York},
  year      = {2005},
  doi       = {10.1007/0-387-29929-7}
}

@article{Bezdek1991AffineSubspaces,
  author  = {Bezdek, K.},
  title   = {The problem of illumination of the boundary of a convex body by affine subspaces},
  journal = {Mathematika},
  volume  = {38},
  number  = {2},
  pages   = {362--375},
  year    = {1991},
  doi     = {10.1112/S0025579300006707}
}

@article{Bezdek1992Relatives,
  author  = {Bezdek, K.},
  title   = {{Hadwiger}'s covering conjecture and its relatives},
  journal = {Amer. Math. Monthly},
  volume  = {99},
  number  = {10},
  pages   = {954--956},
  year    = {1992},
  doi     = {10.2307/2324492}
}

@incollection{Bezdek1993HadwigerLevi,
  author    = {Bezdek, K.},
  title     = {{Hadwiger}--{Levi}'s covering problem revisited},
  booktitle = {New Trends in Discrete and Computational Geometry},
  editor    = {Pach, J.},
  series    = {Algorithms and Combinatorics},
  volume    = {10},
  pages     = {199--233},
  publisher = {Springer-Verlag},
  address   = {Berlin},
  year      = {1993}
}

@article{Bezdek2006Illumination,
  author  = {Bezdek, K.},
  title   = {The illumination conjecture and its extensions},
  journal = {Period. Math. Hungar.},
  volume  = {53},
  number  = {1--2},
  pages   = {59--69},
  year    = {2006},
  doi     = {10.1007/s10998-006-0021-4}
}

@incollection{BezdekKhan2018,
  author    = {Bezdek, K. and Khan, M. A.},
  title     = {The geometry of homothetic covering and illumination},
  booktitle = {Discrete Geometry and Symmetry},
  series    = {Springer Proc. Math. Stat.},
  volume    = {234},
  pages     = {1--30},
  publisher = {Springer},
  address   = {Cham},
  year      = {2018},
  doi       = {10.1007/978-3-319-78434-2_1}
}

@article{RogersZong1997,
  author  = {Rogers, C. A. and Zong, C.},
  title   = {Covering convex bodies by translates of convex bodies},
  journal = {Mathematika},
  volume  = {44},
  number  = {1},
  pages   = {215--218},
  year    = {1997},
  doi     = {10.1112/s0025579300012079}
}

@article{Lassak1986,
  author  = {Lassak, M.},
  title   = {Covering a plane convex body by four homothetical copies with the smallest positive ratio},
  journal = {Geom. Dedicata},
  volume  = {21},
  number  = {2},
  pages   = {157--167},
  year    = {1986},
  doi     = {10.1007/bf00182903}
}

@article{Zong2010,
  author  = {Zong, C.},
  title   = {A quantitative program for {Hadwiger}'s covering conjecture},
  journal = {Sci. China Math.},
  volume  = {53},
  number  = {9},
  pages   = {2551--2560},
  year    = {2010},
  doi     = {10.1007/s11425-010-4087-3}
}

@article{ArtsteinAvidanSlomka2015,
  author  = {Artstein-Avidan, S. and Slomka, B. A.},
  title   = {On weighted covering numbers and the {Levi}--{Hadwiger} conjecture},
  journal = {Israel J. Math.},
  volume  = {209},
  number  = {1},
  pages   = {125--155},
  year    = {2015},
  doi     = {10.1007/s11856-015-1213-5}
}

@article{HuangSlomkaTkoczVritsiou2022,
  author  = {Huang, H. and Slomka, B. A. and Tkocz, T. and Vritsiou, B.-H.},
  title   = {Improved bounds for {Hadwiger}'s covering problem via thin-shell estimates},
  journal = {J. Eur. Math. Soc.},
  volume  = {24},
  number  = {4},
  pages   = {1431--1448},
  year    = {2022},
  doi     = {10.4171/jems/1132}
}

@article{CamposHintumMorrisTiba2024,
  author  = {Campos, M. and van Hintum, P. and Morris, R. and Tiba, M.},
  title   = {Towards {Hadwiger}'s conjecture via {Bourgain} slicing},
  journal = {Int. Math. Res. Not. IMRN},
  volume  = {2024},
  number  = {10},
  pages   = {8282--8295},
  year    = {2024},
  doi     = {10.1093/imrn/rnad198}
}

@article{HeMartiniWu2016,
  author  = {He, C. and Martini, H. and Wu, S.},
  title   = {On covering functionals of convex bodies},
  journal = {J. Math. Anal. Appl.},
  volume  = {437},
  number  = {2},
  pages   = {1236--1256},
  year    = {2016},
  doi     = {10.1016/j.jmaa.2016.01.055}
}

@article{WuHe2019,
  author  = {Wu, S. and He, C.},
  title   = {Covering functionals of convex polytopes},
  journal = {Linear Algebra Appl.},
  volume  = {577},
  pages   = {53--68},
  year    = {2019},
  doi     = {10.1016/j.laa.2019.04.022}
}

@article{LiMengWu2022,
  author  = {Li, X. and Meng, L. and Wu, S.},
  title   = {Covering functionals of convex polytopes with few vertices},
  journal = {Arch. Math. (Basel)},
  volume  = {119},
  number  = {2},
  pages   = {135--146},
  year    = {2022},
  doi     = {10.1007/s00013-022-01727-z}
}

@article{YuGaoHeWu2023,
  author  = {Yu, M. and Gao, S. and He, C. and Wu, S.},
  title   = {Estimations of covering functionals of simplices},
  journal = {Math. Inequal. Appl.},
  volume  = {26},
  number  = {3},
  pages   = {793--809},
  year    = {2023},
  doi     = {10.7153/mia-2023-26-48}
}

@article{HeLvMartiniWu2023,
  author  = {He, C. and Lv, Y. and Martini, H. and Wu, S.},
  title   = {A branch-and-bound approach for estimating covering functionals of convex bodies},
  journal = {J. Optim. Theory Appl.},
  volume  = {196},
  number  = {3},
  pages   = {1036--1055},
  year    = {2023},
  doi     = {10.1007/s10957-022-02146-4}
}

@article{ChenGaoWu2024,
  author  = {Chen, F. and Gao, S. and Wu, S.},
  title   = {Covering cross-polytopes with smaller homothetic copies},
  journal = {AIMS Math.},
  volume  = {9},
  number  = {2},
  pages   = {4014--4020},
  year    = {2024},
  doi     = {10.3934/math.2024195}
}

@article{LyuChenWu2025,
  author  = {Lyu, Y. and Chen, F. and Wu, S.},
  title   = {Homothetic covering of crosspolytopes},
  journal = {Mathematics},
  volume  = {13},
  number  = {4},
  pages   = {546},
  year    = {2025},
  doi     = {10.3390/math13040546}
}

@article{ChenGaoLiWu2025,
  author  = {Chen, F. and Gao, S. and Li, X. and Wu, S.},
  title   = {Covering the unit ball of $\ell_p^n$ with smaller balls and related inequalities},
  journal = {Math. Inequal. Appl.},
  volume  = {28},
  number  = {1},
  pages   = {99--119},
  year    = {2025},
  doi     = {10.7153/mia-2025-28-07}
}

@article{LianXiaXueZhang2026,
  author  = {Lian, Y. and Xia, Y. and Xue, F. and Zhang, Y.},
  title   = {On the covering functionals of simplices and cross-polytopes},
  journal = {Results Math.},
  volume  = {81},
  number  = {3},
  pages   = {Paper No. 82, 18 pp.},
  year    = {2026},
  doi     = {10.1007/s00025-026-02651-2}
}

\end{document}